\documentclass[10pt]{article}
\usepackage{lipsum}
\usepackage{amsfonts}
\usepackage{graphicx}
\usepackage{epstopdf}
\usepackage{algorithmic}
\usepackage{amsmath} 
\usepackage{amsthm}
\usepackage{amsfonts,mathrsfs}
\usepackage{amssymb}
\usepackage{color}
\usepackage{graphicx}
\usepackage{epsfig}
\usepackage{subfigure}
\usepackage{enumerate}
\usepackage{algorithm}
\usepackage[hidelinks]{hyperref}
\usepackage{empheq}
\usepackage{bm}
\usepackage{accents}
\usepackage{cite}
\usepackage{balance}
\usepackage{marginnote}

\ifpdf
\DeclareGraphicsExtensions{.eps,.pdf,.png,.jpg}
\else
\DeclareGraphicsExtensions{.eps}
\fi
\linespread{0.98}


\textwidth=6in
\textheight=8.6in 
\setlength{\topmargin}{-0.5in}
\setlength{\oddsidemargin}{0.2in} 
\setlength{\evensidemargin}{0.2in}

\newtheorem{definition}{Definition}{\it}{}
\newtheorem{example}{Example}{\it}{}
\newtheorem{corollary}{Corollary}{\it}{}
\newtheorem{proposition}{Proposition}{\it}{}
\newtheorem{lemma}{Lemma}{\it}{}
\newtheorem{theorem}{Theorem}{\it}{}
\newtheorem{remark}{Remark}{\it}{}
\newtheorem{assumption}{Assumption}{\it}{}


\newcommand{\mc}{\mathcal}
\newcommand{\bb}{\mathbb}

\DeclareMathAlphabet{\mathbbmsl}{U}{bbm}{m}{sl}
\newcommand{\bbi}{\mathbbmsl}

\newcommand{\dom}{\operatorname{dom}}

\newcommand{\red}{\textcolor{red}}

\newcommand{\argmin}{\operatorname{argmin}}

\newcommand{\proj}{\mathrm{proj}}

\newcommand{\Id}{\mathrm{Id}}

\newcommand{\diag}{\operatorname{diag}}

\newcommand{\col}{\operatorname{col}}
\newcommand{\zer}{\operatorname{zer}}

\newcommand{\dist}{\mathrm{dist}}
\newcommand{\nc}{\mathrm{N}}
\newcommand{\0}{\mathbf{0}}
\newcommand{\1}{\mathbf{1}}

\newcommand{\gph}{\operatorname{gph}}

\newcommand{\Rmnum}[1]{\expandafter\@slowromancap\romannumeral #1@}
\newcommand{\eod}{\ensuremath{\hfill\Box}}
\newcommand{\qedd}{\ensuremath{\hfill \blacksquare}}

\title{ Bregman algorithms for mixed-strategy generalized Nash equilibrium seeking in a class of mixed-integer games}
\author{Wicak Ananduta and Sergio Grammatico
	\thanks{This work was partially funded by the ERC under research project COSMOS (802348).}
	\thanks{W. Ananduta and S. Grammatico are with the Delft Center of Systems and Control (DCSC), TU Delft, the Netherlands. E-mail addresses: \texttt{\{w.ananduta, s.grammatico\}@tudelft.nl}. }
}





\begin{document}

\maketitle

\begin{abstract}
We consider the problem of computing a mixed-strategy generalized Nash equilibrium (MS-GNE) for a class of games where each agent has both continuous and integer decision variables. Specifically, we propose a novel Bregman forward-reflected-backward splitting and design distributed algorithms that  exploit the problem structure. Technically, we prove convergence to a variational MS-GNE under mere  monotonicity and Lipschitz continuity assumptions, which are typical of continuous GNE problems. Finally, we show the performance of our algorithms via numerical experiments. 
\end{abstract}


	\textbf{\textit{Key words:}} mixed-integer games, mixed-strategy, generalized Nash equilibrium problems, operator splitting methods


\section{Introduction}
Plenty of decision-making problems involve both continuous and integer decision variables. For instance, in power systems, binary variables are used to model the states of devices, e.g. whether they are on or off. Optimizing over these variables along with continuous ones, e.g. the amount of power generated by a generator or consumed by a load, is the essence of several problems in this domain, such as unit commitment \cite{carrion06,bertsimas13} and demand-side management \cite{kim13}. Another example is the problem of charging electric vehicles \cite{vujanic16,cenedese19} where binary and continuous variables represent the modes of electric vehicles (connected/disconnected and charging/discharging) and the power delivered to/from the electric vehicles, respectively. Mixed-integer problems are also present in transportation systems; for example, the coordination problem for automated vehicles  in \cite{fabiani20} uses integer variables to represent line selections whereas the longitudinal accelerations of the vehicles are continuous variables. Furthermore, in economics, some game-theoretic market models are mixed-integer as well  \cite{sagratella17,sagratella20,sagratella19}. 
{These multi-agent decision problems can be classified as generalized mixed-integer (GMI) games, where each agent must solve its own optimization problem with cost functions and constraints that are coupled with the decision variables of other agents. In this context, the usual goal is to compute a generalized Nash equilibrium (GNE), which describes a collective decision such that each agent has no incentive to unilaterally deviate.}

Currently, the study of GMI games and methods to compute their GNE is limited. The works in  \cite{cenedese19,fabiani20,sagratella17,sagratella20} consider potential GMI games, where there exists a global function that tracks the changes of the cost when some players adjust their decisions. Existence of a GNE in potential GMI games is proven in  \cite{sagratella17}. For this class,  \cite{fabiani20,cenedese19,sagratella17} propose iterative GNE-seeking algorithms based on (Gauss-Seidel or Gauss-Southwell type of) best-response methods,   also suitable for solving 2-group partitionable mixed-integer NE problems \cite{sagratella16,sagratella17b}. A more general class of GMI games is considered in \cite{sagratella19}, in which a potential function may however not exist. In fact, \cite{sagratella19} considers that the cost function of each agent is continuous and convex with respect to its own decision variables that are linearly coupled (only) with the integer variables of other agents. 
Moreover, \cite{sagratella19} shows that {the set of GNE of this class of GMI games is finite (and can be empty), and proposes exact equilibrium seeking branch-and-bound algorithms.} 

One of the main challenges in computing a GNE of a GMI game is that the feasible sets for the integer variables are discrete, hence non-convex. One possible approach  to work around finite action spaces in (non-generalized) games, {particularly those that are played repeatedly,} is considering the mixed-strategy (MS) extension, i.e., where the (integer) decisions are chosen randomly based on some probability distributions (mixed strategies). 
The existence of a mixed-strategy equilibrium for finite games was shown by Nash in his seminal work \cite{nash51} and methods to compute such equilibria are still actively researched, e.g.   \cite{arslan07,gao20,mertikopoulos19}. The work in \cite{arslan07} considers an assignment game, which is a potential game, and proposes a regret-based learning and a spatial adaptive play. Interestingly, for the game in \cite{arslan07},  a pure NE can be computed through the mixed-strategy reformulation. 
Furthermore, \cite{gao20} considers monotone finite games and shows that a continuous-time exponentially discounted reinforcement learning is an output strictly equilibrium independent passive (OS-EIP) system that converges to a discrete distribution that is an approximation of an MS Nash equilibrium (MS-NE).  
Such an approximate Nash distribution is obtained via the use of the \textit{soft-max} function, which has a smoothness property required for the OS-EIP framework.  On the other hand, for general finite games,  the dual averaging \cite{mertikopoulos19}  generates a sequence converging to a strict pure NE if  there exists one and if the sequence is initialized close enough. In addition, particularly for two-player zero-sum games,  the ergodic average of the generated sequence converges to the set of MS-NE. 

To the best of our knowledge, the mixed-strategy extension of a GMI game and solutions in the form of mixed-strategy GNE (MS-GNE) have not been studied in the literature.  
As a consequence of considering the mixed-strategy extension, instead of dealing with discrete constraints,  we have a mixed-strategy GNE problem (MS-GNEP), which has only continuous decision variables, namely mixed strategies (discrete probability distributions) and general continuous variables. 

As extensively discussed in \cite{belgioioso17,yi19,belgioioso20,franci20,franci20b}, fixed-point algorithms based on operator-splitting methods are effective to solve GNEPs with continuous variables since they can deal with the least restrictive assumptions for global convergence results; {moreover,} they can be implemented in a distributed manner, {enhancing} scalability and minimum information sharing. 
Nevertheless, none of the existing fixed-point algorithms for continuous GNEPs take the advantage of the structure of the constraints. Such algorithms typically require each agent to perform a Euclidean projection onto its local feasible set or a proximal step, which is a convex program. On the other hand, we observe from mirror descent algorithms for convex and online optimization \cite{beck03,belmega18,gao20b} { that there exist closed-form non-Euclidean projection mappings for certain types of feasible sets.} 
{The works in \cite{gao20,mertikopoulos19,gao20b} for non-generalized games also adopt the mirror mapping approach albeit do not consider fixed-point methods.}  
{Essentially, a mirror mapping is a Bregman projection, which uses the Bregman divergence/distance of a sufficiently well-behaved  function (e.g. Legendre function) as the ``distance'' metric  \cite{bauschke97}. Therefore, our strategy to incorporate a mirror mapping into a fixed-point algorithm is that of considering a Bregman projection as the backward operator of a splitting method. 
	Recently, this integration is introduced for the forward-backward splitting in \cite{bui20}.} Unfortunately, we cannot use the Bregman algorithm in \cite{bui20} since it requires  cocoercivity of the forward operator, a property that is not satisfied by the MS-GNEP {in general}. Instead, we extend the forward-reflected-backward (FoRB) splitting \cite{malitsky20} since it works for (non-cocoercive non-strongly) monotone operators and it only requires one forward and one backward steps (see \cite{belgioioso20} for a discussion on the advantages of the FoRB over other fixed-point methods in generalized aggregative games). 


The main  contributions of this work are summarized as follows: 
\begin{itemize}
	\item We consider for the first time a mixed-strategy GNEP, formulated as a  monotone inclusion with special structure (Sections \ref{sec:gmi_games}-\ref{sec:NE_mon}). 
	\item We propose the Bregman forward-reflected-backward (B-FoRB) splitting me- thod, which is an extension of the FoRB  \cite{malitsky20} and uses the Bregman distance in the backward operator (Section \ref{sec:FoRB}). We show the convergence of this novel splitting method under monotonicity and Lipschitz continuity assumptions of the operators  and strong convexity of the regularizer function.
	\item We develop distributed algorithms to compute a variational mixed-strategy generalized Nash equilibrium of the considered game  based on the B-FoRB. By using a suitable regularizer function, we exploit the simplicial structure of the feasible set of mixed strategies to obtain a closed-form mirror mapping. In particular, we provide algorithms with semi-decentralized (Sections \ref{sec:BFORB_alg} and \ref{sec:GNE_alt_alg}) and distributed (Section \ref{sec:fd_alg}) structures. 
\end{itemize}
In Section \ref{sec:apps}, we formulate several problems as GMI games, i.e., the demand-side management in energy systems, an extension of networked Cournot markets, {and} the discrete-flow control problem. 
Furthermore, this section also showcases the performance of the proposed algorithms via numerical simulations.

\subsection*{Notation and definitions}
The set of real numbers is denoted by $\bb R$ whereas the extended real numbers are denoted by $\overline {\bb R} := \bb R \cup \{ +\infty \}$.   The vector of all $1$ (or $0$) with dimension $n$ are denoted by $\1_n$ ($\bm 0_n$). We omit the subscript when the dimension is clear from the context.    
The operator $\col(\cdot)$ stacks the arguments column-wise. The cardinality of a set is denoted by $|\cdot|$. The operators $\operatorname{int}$, $\operatorname{rint}$, and $\operatorname{bd}$ denote the interior, relative interior and boundary of a set.  The operator $\langle x, y \rangle$ denotes the inner product. We denote by $\|\cdot\|$ the Euclidean norm. In general, $p$-norms are denoted by $\|\cdot\|_p$. For a finite set $\mc A$, the set $\Delta(\mc A):=\{x \in \bb R_{\geq 0}^{|\mc A|}\mid \1^{\top} x = 1 \}$ is called the simplex of $\mc A$.  Let $P \succ 0$ be symmetric. For $x,y \in \bb R^n$,  $\langle x, y \rangle_P = \langle x, Py \rangle$  denotes the $P$-weighted Euclidean inner product. 
The graph of an operator $A \colon \bb R^n \rightrightarrows \bb R^n$ is denoted by $\gph (A)$. $\zer(A)$ defines the set of zeros of operator $A$, i.e., $\zer(A) = \{x \in \dom(A) \mid 0 \in A(x)\}$. An operator $A \colon \bb R^n \rightrightarrows \bb R^n$ is monotone if, for any $(x,y), (x',y') \in \gph (A)$,
$ \langle y - y', x-x' \rangle \geq 0 $ \cite[Def. 20.1]{bauschke11}, strictly monotone if, $x \neq x' \Rightarrow \langle y - y', x-x' \rangle > 0 $  and $\sigma$-strongly monotone if $ \langle y - y', x-x' \rangle \geq \sigma \| x-x'\|^2 $, with $\sigma >0$ \cite[Def. 22.1.(ii) and (iv)]{bauschke11}. 
Moreover, an operator $A \colon \bb R^n \to \bb R^n$ is Lipschitz continuous if there exists a constant $\ell_A>0$, such that, for all $x, x' \in \bb R^n$, 
$ \| A(x) - A(x')\| \leq \ell_A \|x-x'\|$ {\cite[Def. 1.47]{bauschke11}}. 
A continuously differentiable function $f \colon \bb R^n \to \bb R$ is $\sigma$-strongly convex with respect to a $p$-norm, with $\sigma >0$, if, for all $x,x' \in \dom f$,
$ f(x') \geq f(x) + \langle \nabla f(x), x'-x \rangle + \frac{\sigma}{2}\|x'-x\|_{p}^2.$
It also holds that
$  \langle \nabla f(x') -\nabla f(x), x'-x \rangle \geq {\sigma}\|x'-x\|_{p}^2.$
Additionally, $f$ is convex if the two previous inequalities hold for $\sigma =0$. 
\begin{definition}[Bregman distance {\cite[Def. 3.1]{bauschke97}}]
	\label{def:Breg_d}
	Suppose that $\varphi$ is a closed, convex, and proper function on $\bb R^n$ with $\operatorname{int}\dom(\varphi) \neq \varnothing$. Furthermore, let $\varphi$ be differentiable on $\operatorname{int}\dom(\varphi) $. The Bregman distance associated with  $\varphi$, denoted by $\dist_{\varphi} \colon \bb R^n \times \operatorname{int}\dom(\varphi) \to \overline{\bb R}$, is defined as 
	\vspace{-7pt}
	\begin{equation*}
		\dist_{\varphi}(x,y) = 
		\varphi(x) - \varphi(y) - \langle \nabla \varphi(y), x -y \rangle. 
		\vspace{-21pt}
	\end{equation*}
	\eod 
\end{definition}
\begin{definition}[Legendre function]
	\label{def:Breg_f}
	A closed, convex, and proper function $\varphi$ on $\bb R^n$ is a Legendre function if
	\begin{enumerate}
		\item $\varphi$ is essentially smooth: it is continuously differentiable on $\operatorname{int}\dom(\varphi)$, i.e., $\partial \varphi(x) =\{\nabla \varphi(x)\}$, $\forall x \in \operatorname{int}\dom(\varphi)$ and $\partial \varphi(x) = \varnothing, \forall x \in \operatorname{bd}\dom(\varphi)$ \cite[Def. 2.1, Fact 2.2]{bauschke97};
		\item $\varphi$ is essentially strictly convex: it is strictly convex on every convex subset of $\operatorname{int}\dom(\varphi)$  \cite[Def. 2.3]{bauschke97}. \eod
	\end{enumerate}
\end{definition}

\begin{example}[Euclidean regularizer]
	\label{ex:E_entropy}
	The Euclidean regularizer {$\varphi^{\rm E}\colon \bb R^n \to \bb R, \varphi^{\rm E}(x) :=  \frac{1}{2}\|x\|^2$ is a Legendre function with $\operatorname{int}\dom(\varphi)=\bb R^n$} and is $1$-strongly convex with respect to the Euclidean norm. \eod  
\end{example}
\begin{example}[Negative entropy]
	\label{ex:NE_entropy}
	The negative entropy function,  $\varphi^{\mathrm{NE}}\colon \bb R_{\geq 0}^n \to \bb R,$ $\varphi^{\mathrm{NE}}(x) := \sum_{j=1}^n x^j \ln( x^j)$, where $x^j$ denotes the $j$-th component of $x$, is a Legendre function with  $\operatorname{int}\dom(\varphi)=\bb R_{>0}^n$. This function is also $1$-strongly convex over   {$ \operatorname{rint} (\Delta(\mc X) )$} with respect to the $1$-norm   \cite[Example 2.5]{shalev11}. 
	\eod
\end{example}

\section{A class of generalized mixed-integer games}
\label{sec:gmi_games}

Let $\mc I = \{ 1,2,\dots, N\}$ be a set of agents with mixed-integer decision variables playing a generalized game. {Specifically,} for each agent $i\in \mc I$, let $a_i \in \mc A_i \subset \bb Z^{p_i}$,  denote the discrete variable, which lies in a finite space denoted by $\mc A_i$ of cardinality $|\mc A_i| = m_i$, and let $y_i \in \mc Y_i \subset \bb R^{n_i}$ denote the continuous variable. 
Furthermore, we denote the overall finite and continuous set of all agents by $\mc A = \prod_{i \in \mc I} \mc A_i$ and $\mc Y = \prod_{i \in \mc I} \mc Y_i$, respectively. We also denote  the concatenated integer and continuous decisions of all agents by $\bm a = \col((a_i)_{i \in \mc I})$ and $\bm y = \col((y_i)_{i \in \mc I})$. Occasionally, we also use the subscript $-i$ to denote the decisions of all agents except agent $i$, e.g., $\bm a_{-i} = \col((a_j)_{j \in \mc I \backslash \{i\}})$ and $\bm y_{-i} = \col((y_j)_{j \in \mc I \backslash \{i\}})$. In addition, we define $p = \sum_{i \in \mc I} p_i$, $m = \sum_{i \in \mc I} m_i$, and $n = \sum_{i \in \mc I} n_i$.

Now, let the cost function of each agent $i \in \mc I$ be denoted by $J_i\colon \bb R^{p+n} \to {\bb  R}$, which we assume has a separable structure: 
\begin{equation}
	J_i(\bm a,\bm y) = J_i^{\rm d}( a_i,\bm a_{-i}) + J_i^{\rm c}(y_i, \bm y_{-i}), \label{eq:cost}
\end{equation}
where $J_i^{\rm d} \colon \bb R^{p}  \to {\bb  R}$ is the cost function associated with the integer variables, whereas $J_i^{\rm c} \colon  \bb R^{n} \to {\bb R}$ is the cost function associated with the continuous variables. Then, each agent aims to solve its own optimization problem, i.e., 
\begin{subequations}	
	\begin{empheq}[right={\, \empheqrbrace \forall i \in \mc I,}]{align}
			\underset{(a_i,y_i) }{\min} \ \ & J_i(a_i,\bm a_{-i},y_i, \bm  y_{-i}) \label{eq:cost_f}  \\
			\operatorname{s.t.} \hspace{2pt} \ \ & (a_i,y_i) \in \mc A_i \times  \mc Y_i\\
			&g_i^{\rm d}(a_i) + g_i^{\rm c}(y_i)  \leq \theta_i,  \label{eq:loc_cons}\\
			& \sum_{j \in \mc I} \left(h_j^{\rm d}(a_j) + h_j^{\rm c}(y_j) \right)  \leq \rho, \label{eq:coup_cons}
	\end{empheq}
	\label{eq:gmi_game}%
\end{subequations}
where $J_i$ is defined in \eqref{eq:cost}, \eqref{eq:loc_cons} defines the local constraints between the integer and continuous variables of agent $i$ with {a constant vector} $\theta_i \in \bb R^{n_{\theta_i}}$ and {some functions} $g_i^{\rm d} \colon \bb R^{p_i} \to \bb R^{n_{\theta_i}}$, $g_i^{\rm c} \colon \bb R^{n_i} \to \bb R^{n_{\theta_i}}$, whereas \eqref{eq:coup_cons} defines the {shared} coupling  constraints among all agents with {a constant vector} $\rho \in \bb R^{n_\rho}$ and {some functions} $h_j^{\rm d} \colon \bb R^{p_j} \to \bb R^{n_\rho}$, $h_j^{\rm c} \colon \bb R^{n_j} \to \bb R^{n_\rho}$, {for all $j\in \mc I$}. 
Note that the considered problem captures a large class of games. When $n_i=0$ (and $n_{\theta_i}=0$), for all $i \in \mc I$, the agents in $\mc I$ play a generalized finite game \cite{sagratella17}. Moreover, without the {shared} coupling  constraints \eqref{eq:coup_cons}, it reduces further to a finite game \cite{gao20,hart01}. On the other hand, when $p_i = 0$ (and $n_{\theta_i}=0$), for all $i \in \mc I$, we have a (generalized) continuous game \cite{yi19,belgioioso17}. {Note that} we consider more general constraints \eqref{eq:loc_cons}-\eqref{eq:coup_cons} than those in \cite{sagratella19}.

The work in \cite{sagratella19} discusses some difficulties in dealing with generalized mixed-integer games. Not only solutions (GNE) might not exist in general, but also, when they exist, computing one is extremely challenging. 
Differently from \cite{sagratella19}, here we focus on the mixed strategies; {thus, we assume that the game is played repeatedly and  
	each agent randomly plays its integer (finite) decision $a_i$ based on a probability distribution over the set $\mc A_i$.} 
{By distinctively labeling each component of $\mc A_i$ with an integer, $j \in \{ 1,2,\dots, m_i\}$ and with a slight abuse of notation, let $a_i^j \in \mc A_i = \{a_i^1,\dots,a_i^{m_i}\}$, for $j =1,\dots,{m_i}$, denote the $j$-th element of $\mc A_i$. In this regard, we denote the mixed strategy (probability distribution) of agent $i$ by $x_i = \col((x_i^j)_{j=1}^{m_i}) \in \Delta(\mc A_i)$ and the mixed strategy profile of all agents by $\bm x = \col((x_i)_{i \in \mc I})$.  The variable $x_i^j$ denotes the probability that agent $i$ chooses the action $a_i^j \in \mc A_i$. Thus, we have that $x_i^1 + x_i^2 + \dots + x_i^{m_i} = 1$.  
	In the mixed-strategy extension, we consider the ``expected'' cost function, i.e., 
	\begin{equation*}
		\bbi J_i(\bm x, \bm y) =  \bbi J_i^{\rm d}(\bm x) + J_i^{\rm c}(\bm y),
		\label{eq:expected_cost}
	\end{equation*}
	where $\bbi J_i^{\rm d}(\bm x)$ denotes the expected cost function associated with the integer variables and is defined by \cite[Eq. (4)]{gao20}:
	\begin{align}
		\bbi J_i^{\rm d}(\bm x) &= \sum_{j_1=1}^{m_1} \cdots \sum_{j_n =1}^{m_n} J_i^{\rm d}(a_1^{j_1},\dots,a_n^{j_n}) x_1^{j_1}  \cdots x_n^{j_n} 
		= \langle f_i(\bm x_{-i}), x_i \rangle, \label{eq:exp_cost}
	\end{align}
	with $f_i(\bm x_{-i})$  defines how $\bm x_{-i}$ is coupled in $\bbi J_i^{\rm d}$, i.e. {$f_i(\bm x_{-i}) = \col((f_i^j(\bm x_{-i}))_{j = 1}^{m_i})$, where $f_i^j(\bm x_{-i}) \hspace{-1pt}=\hspace{-1pt} \sum_{j_1=1}^{m_1} \hspace{-1pt} \cdots \hspace{-1pt} \sum_{j_n =1}^{m_{i-1}} \sum_{j_n =1}^{m_{i+1}}  \cdots \sum_{j_n =1}^{m_n} J_i^{\rm d}(a_1^{j_1},\dots, a_n^{j_n}) \cdot x_1^{j_1}  \cdots x_{i-1}^{j_{i-1}} x_{i+1}^{j_{i+1}} \cdots x_n^{j_n}$.} Note that $f_i(\bm x_{-i})$ is the expected cost vector, i.e., each element {($f_i^j(\bm x_{-i}$))} is the expected cost when agent $i$ chooses each action in $\mc A_i$ with probability 1.  
	
	\begin{example}[Two-player zero-sum game] 
		\label{ex:MP}
		We illustrate the transformation of the original cost function to that of mixed-strategy extension via a zero-sum game, namely two-player matching-pennies \cite[Example 3]{gao20}. The decision of each player is either choosing the head (0) or the tail (1) of a coin. If both players choose the same side, then the cost value of player 1 is $J_1^{\rm d}=+ 1$, whereas that of player 2 is $J_2^{\rm d}=-1$; Otherwise, $J_1^{\rm d}= -1$ and $J_2^{\rm d}= +1$. Now, let $x_i^1$ and $x_i^2$, for $i=1,2$, denote the probability that player $i$ chooses head ($a_i=0$) and tail ($a_i=1$), respectively. The expected cost functions of the two players  are then given by:
		\begin{equation*}
			\bb J_1^{\rm d}(\bm x) = x_2^{\top} \begin{bmatrix}
				+1 & -1 \\ -1 & +1
			\end{bmatrix} x_1, \quad 
			\bb J_2^{\rm d}(\bm x) = x_1^{\top} \begin{bmatrix}
				-1 & +1 \\ +1 & -1
			\end{bmatrix} x_2.
			\vspace{-21pt}
		\end{equation*}
		\eod
	\end{example}
	
	Next, we relax the local and coupling constraints \eqref{eq:loc_cons}-\eqref{eq:coup_cons} by considering that they are satisfied in expectation, which is justified by the fact that the game is supposed to be played repeatedly, i.e.,
	\begin{align*}
		\bb E \left(g_i^{\rm d}(a_i) \right) + g_i^{\rm c}(y_i)  \leq \theta_i \Leftrightarrow  G_i^{\rm d}x_i+ g_i^{\rm c}(y_i)  \leq \theta_i,
	\end{align*}
	and
	\begin{align*}
		\sum_{j \in \mc I} \bb E \left(h_j^{\rm d}(a_j)\right) + h_j^{\rm c}(y_j)  \leq \rho  
		\Leftrightarrow   \sum_{j \in \mc I} H_j^{\rm d} x_j + h_j^{\rm c}(y_j) \leq \rho,
	\end{align*}
	where $G_i^{\rm d} = \begin{bmatrix}g_i^{\rm d} (a_i^1) & \cdots & g_i^{\rm d} (a_i^{m_i})   \end{bmatrix}$,  i.e., the row concatenation of the vectors $g_i^{\rm d}(a_i^j)$,  $j=1,\dots, m_i$, for all actions $a_i^j \in \mc A_i$. \color{black} Similarly, $H_i^{\rm d} = \begin{bmatrix}h_i^{\rm d} (a_i^1) & \cdots & h_i^{\rm d} (a_i^{m_i})   \end{bmatrix}$, for all $i \in \mc I$.
	Thus, the agents should solve
	\begin{subequations}	
		\begin{empheq}[right={\, \empheqrbrace \forall i \in \mc I,}]{align}
				\underset{(x_i, y_i) }{\min} \ & \bbi J_i^{\rm d}(x_i,\bm x_{-i}) + J_i^{\rm c}(y_i,\bm y_{-i}) \label{eq:cost_f1}  \\
				\operatorname{s.t.} \ \ & (x_i, y_i)  \in \Delta( \mc A_i)\times \mc Y_i, \\
				& G_i^{\rm d}x_i+ g_i^{\rm c}(y_i)  \leq \theta_i,  \label{eq:loc_cons1}\\
				& \sum_{j \in \mc I}  \left( H_j^{\rm d} x_j + h_j^{\rm c}(y_j) \right) \leq \rho,
				\label{eq:coup_cons1}
		\end{empheq}
		\label{eq:gmi_game1}%
	\end{subequations}
	where $\bbi J_i^{\rm d}(x_i,\bm x_{-i})$  is linear in $x_i$ as defined in \eqref{eq:exp_cost}. For each individual optimization problem \eqref{eq:gmi_game1}, let us define the local feasible set of agent $i$ as 
	\begin{align}
		{{\Omega}}_i := \left\{ (x_i,y_i)\in \Delta(\mc A_i) \times \mc Y_i \mid  G_i^{\rm d}x_i+ g_i^{\rm c}(y_i)  \leq \theta_i\right\}. \label{eq:loc_set}
	\end{align}
	Therefore, the collective global feasible set, denoted by $K$, is then defined as 
	\begin{equation}
		K \hspace{-2pt}:=\hspace{-2pt} \Big(\prod_{i \in \mc I} {{\Omega}}_i \Big) \cap \Big\{(\bm x,\bm y) \mid \sum_{j \in \mc I} \left( H_j^{\rm d} x_j + h_j^{\rm c}(y_j) \right) \leq \rho \Big\}.
		\label{eq:glob_set}
	\end{equation}
	
	
	The mixed-strategy extension of a GMI game is a generalized continuous game with special constraint sets, i.e., some decisions are defined over a simplex. Now, we state some assumptions on the cost functions and the constraints of the game that are commonly considered in continuous GNEPs \cite{belgioioso17,franci20,yi19}. 
	\begin{assumption}
		\label{as:cont}
		For each $i \in \mc I$, the functions  $J_i^{\rm c}$, $g_i^{\rm c}$, and $h_i^{\rm c}$, in \eqref{eq:gmi_game1} are {(component-wise)} convex and continuously differentiable in $y_i$. Moreover, the set $\mc Y_i$ in \eqref{eq:gmi_game1} is nonempty, compact, and convex, and the set $K$ in \eqref{eq:glob_set} satisfies Slater's constraint qualification {\cite[Eq. (27.50)]{bauschke11}.}  \eod
	\end{assumption}

	\subsection*{Problem statement}
	Let us consider Nash equilibria as the solution concept of the game in \eqref{eq:gmi_game1}. Specifically, we formally define a mixed-strategy generalized Nash equilibrium (MS-GNE) as follows.
	\begin{definition}
		\label{def:gne}
		A set of strategies $(\bm x^{\star},\bm y^{\star})$ is a mixed-strategy generalized Nash equilibrium (MS-GNE) of the game in \eqref{eq:gmi_game1} if $(\bm x^{\star},\bm y^{\star}) \in K$ and, for each $i \in \mc I$,
		$$ \bbi J_i^{\rm d}(x_i^{\star},\bm x_{-i}^{\star}) + J_i^{\rm c}(y_i^{\star},\bm y_{-i}^{\star}) \leq \bbi J_i^{\rm d}(x_i,\bm x_{-i}^{\star}) + J_i^{\rm c}(y_i,\bm y_{-i}^{\star}), $$
		for all $ (x_i,y_i) \in {{\Omega}}_i \cap \{(u,v) \mid H_i^{\rm d} u + h_i^{\rm c} (v) \leq \rho - \sum_{j \in \mc I \backslash \{i\}}(H_j^{\rm d} x_j^{\star} + h_j^{\rm c}(y_j^{\star})) \}$. \eod
	\end{definition} 
	Our objective is to solve the MS-GNE problem (MS-GNEP), that is finding an MS-GNE of the game in  \eqref{eq:gmi_game1} via distributed algorithms. 
	\color{black} 
	
	\section{{Monotone inclusion formulation of} the generalized Nash equilibrium problem with mixed strategies}
	\label{sec:NE_mon}
	
	
	
	In this section, we formulate the MS-GNEP of the game \eqref{eq:gmi_game1} as an inclusion problem, where we want to find a zero of the sum of some monotone operators. By doing so, we can resort to and build upon a suitable operator-splitting method. 
	
	To this aim, let us first define the game mapping, $F:\bb R^{m+n} \to \bb R^{m+n}$, as follows:
	\begin{equation}
		F(\bm x,\bm y) = \col(F^{\rm d}(\bm x),F^{\rm c}(\bm y)),
		\label{eq:pseudograd}
	\end{equation}
	where $F^{\rm d}(\bm x) = \col((\nabla_{x_i}\bbi J_i^{\rm d}(\bm x) )_{i \in \mc I} ) =\col((f_i(\bm x_{-i}) )_{i \in \mc I} ) $ denotes the pseudogradient associated with the mixed strategy $\bm x$  while $F^{\rm c}(\bm y) = \col((\nabla_{y_i} J_i^{\rm c}(\bm y) )_{i \in \mc I} )$ denotes the pseudogradient associated with the continuous variable $\bm y$. 
	The mapping $F$ in \eqref{eq:pseudograd} characterizes the game and it provides a sufficient condition for the existence of a GNE. Specifically, a GNE exists if $F$ is monotone \cite[Proposition 12.11]{palomar10}. 
	Therefore, we consider the following assumption regarding the monotonicity and Lipschitz continuity of the pseudogradients $F^{\rm d}$ and $F^{\rm c}$.
	
	\begin{assumption}
		\label{as:mon_F}
		The pseudogradients $F^{\rm d}$ and $F^{\rm c}$ in \eqref{eq:pseudograd} are monotone and Lipschitz continuous, with Lipschitz constant {$\ell_{F^{\rm d}}$} and {$\ell_{F^{\rm c}}$}, respectively.  \eod 
	\end{assumption}

	It follows from Assumption \ref{as:mon_F} that $F$ is maximally monotone, which is due to \cite[Proposition 20.23]{bauschke11}, and Lipschitz continuous with constant ${\ell_F} = \max(\ell_{F^{\rm d}},\ell_{F^{\rm c}})$. 
	The monotonicity of $F^{\rm d}$ is worth discussing in view of its particular structure \eqref{eq:exp_cost}. The authors of \cite{gao20} observe that {the mixed-strategy extension of a zero-sum finite game has a monotone pseudogradient.} %
	{For instance, we see from Example \ref{ex:MP} that the pseudogradient of the matching pennies game is
		$$ F^{\rm d}(\bm x) = \begin{bmatrix}
			0 & M \\ -M & 0
		\end{bmatrix}\bm x, \ \text{where } M = \begin{bmatrix}
			+1 & -1 \\ -1 & +1
		\end{bmatrix},$$
		resulting in a monotone $F^{\rm d}$. } 
	The following case also yields monotone $F^{\rm d}$. 
	
	\begin{example}
		For each $i \in \mc I$, let $J_i^{\rm d}(\bm a)$ be linearly coupled, i.e., $J_i^{\rm d}(a_i,\bm a_{-i}) =   \pi_i(a_i) + \langle \sum_{i\in \mc I} M_{i,j} a_j, a_i \rangle,$ with $\pi_i \colon \bb R^{p_i} \to \bb R$ and $M_{i,j} \in \bb R^{p_i \times p_j}$, for all $j \in \mc I$.  Therefore, 
		$$\bbi J_i^{\rm d}(\bm x)= \langle \bar \pi_i, x_i \rangle +  \sum_{j \in \mc I \backslash \{i\}} \langle M_{i,j}A_jx_j, A_ix_i \rangle,$$
		where $\bar \pi_i = \col((\pi_i(a_i^{{l}} )+ a_i^{{l} \top}M_{i,i}^\top a_i^{{l}})_{{{l}}=1}^{m_i})$ and  $A_j~=~\begin{bmatrix} a_j^1 & \cdots & a_j^{m_j}   \end{bmatrix}$, {with $a_j^l   \in \mc A_j$, for $l =1,\dots,m_i$} and all $j \in \mc I$. It follows that 
		$$F^{\rm d}(\bm x) = \col((\bar \pi_i)_{i \in \mc I} ) +  A^{\top} M^\top A\bm x,$$
		where $A = \diag((A_i)_{i\in\mc I})$ and $M = [M_{ij}]_{i,j \in \mc I}$, in which the block diagonal $M_{i,i}=0$, for all $i \in \mc I$. The pseudogradient
		$F^{\rm d}$ is monotone if
		$ \langle F^{\rm d}( \bm x) - F^{\rm d} (\bm x'), \bm x - \bm x' \rangle = (\bm x - \bm  x') A^{\top} M^\top A (\bm x - \bm x') \geq 0,$ for all $\bm x, \bm x' \in \prod_{i \in \mc I}\Delta(\mc A_i),$
		which holds true if $M^\top \succcurlyeq 0$, {e.g., when $M$ is skew-symmetric.} \eod 
	\end{example}
	
	\begin{remark}
		\label{rem:cost_trick}
		{Suppose that $J_i^{\mathrm d}(\bm a)$, for each $i \in \mc I$, is continuously differentiable.} If the pseudogradient $\col((\nabla_{a_i}J_i^{\rm d}(\bm a))_{i \in \mc I})$ is monotone, but the pseudogradient of the expected cost $\bbi J_i^{\rm d} (\bm x)$, which is $F^d(\bm x)$, is not monotone, {we can still satisfy Assumption \ref{as:mon_F} by reformulating the game}. We can introduce a continuous auxiliary variable, denoted by $v_i \in \bb R^{p_i}$, for each $i \in \mc I$, and an additional local constraint $a_i = v_i$. Therefore, each agent $i$ has  continuous decision variables $(y_i,v_i) \in \bb R^{n_i + p_i}$ with cost function {$J_i^{\rm d}(\bm v) +J_i^{\rm c}(\bm y)$, where $\bm v = \col((v_i)_{i \in \mc I})$,} whereas its integer decision variable $a_i$ has zero cost.  Section \ref{sec:dfc} provides an example of such equilibrium  problems.  \eod  
	\end{remark}
	
	{Next, we deal with the constraint in  \eqref{eq:coup_cons1}, which couples the decision variables of all agents, by dualizing it and introducing the dual variable $\lambda_i$. Moreover, in order to exploit the simplicial structure of the feasible set of  $x_i$, we dualize the constraint in \eqref{eq:loc_cons1}, which couples $x_i$ and $y_i$ by introducing the dual variable $\mu_i$. Therefore, we obtain the following Lagrangian function, for each agent $i \in \mc I$: }
	\begin{equation}
		\begin{aligned}
			\mc L_i(\bm x,\bm y,\lambda_i, \mu_i) 
			&
			= J_i^{\rm d}(\bm x) + J_i^{\rm c}(\bm y) + \langle \mu_i, G_i^{\rm d}x_i+ g_i^{\rm c}(y_i)  - \theta_i\rangle \\
			&\quad  
			-\Big\langle \lambda_i, \rho -\sum_{j \in \mc I} \left(H_j^{\rm d}x_j + h_j^{\rm c}(y_j) \right) \Big\rangle.
		\end{aligned}
		\label{eq:Lagr1}
	\end{equation}
	Considering the Lagrangian function \eqref{eq:Lagr1}, Assumption \ref{as:cont}, and Definition \ref{def:gne}, the Karush-Kuhn-Tucker (KKT) conditions of a GNE, denoted by $(\bm x^{\star},\bm y^{\star})$, for each $i \hspace{-2pt} \in \hspace{-2pt} \mc I$, are as follows:
	\begin{equation}
		\begin{aligned}
			\bm 0 &\in \begin{bmatrix}
				\nc_{\Delta(\mc A_i)}(x_i^{\star}) \\
				\nc_{\mc Y_i}(y_i^{\star})
			\end{bmatrix}
			\hspace{-2pt}+\hspace{-2pt} \begin{bmatrix}
				\nabla_{x_i}\bbi J_i^{\rm d}(\bm x^{\star})\\
				\nabla_{y_i} J_i^{\rm c}(\bm y^{\star})
			\end{bmatrix} \hspace{-2pt}+\hspace{-2pt}  
			\begin{bmatrix}
				G_i^{\rm d \top} \\ \nabla_{y_i} g_i^{\rm c}(y_i^{\star})^{\top}
			\end{bmatrix} \mu_i^{\star} 
			+  
			\begin{bmatrix}
				H_i^{\rm d \top} \\ \nabla_{y_i} h_i^{\rm c}(y_i^{\star})^{\top}
			\end{bmatrix} \lambda_i^{\star}, \\
			\bm 0 &\in \nc_{\bb R^{n_{\theta_i}}_{\geq 0}}(\mu_i^{\star}) + \theta_i- (G_i^{\rm d} x_i^{\star} + g_i^{\rm c} (y_i^{\star})),\\
			\bm 0 &\in \nc_{\bb R^{n_\rho}_{\geq 0}}(\lambda_i^{\star}) + \rho - \sum_{j\in \mc I}(H_j^{\rm d} x_j^{\star} + h_j^{\rm c} (y_j^{\star})).
		\end{aligned}
		\label{eq:kkt1}
	\end{equation}

	As motivated in \cite{facchinei10,belgioioso17}, we are particularly interested in computing a variational GNE, where the dual variable $\lambda_i$, for all $i \in \mc I$, are equal, i.e., $\lambda_i = \lambda_j = \lambda$, for all $i, j\in \mc I$.
	Therefore,  a point $(\bm x, \bm y)$ is a variational MS-GNE if there exists $(\bm \mu, \lambda)$, where $\bm \mu = \col((\mu_i )_{i\in\mc I})$, such that $(\bm x, \bm y, \bm \mu, \lambda)$ satisfies the KKT conditions \eqref{eq:kkt1}. {Thus, by denoting  ${\bm{\omega}} = (\bm x, \bm y, \bm \mu, \lambda)$ and compactly writing \eqref{eq:kkt1}, we describe the MS-GNEP of the game \eqref{eq:gmi_game1}  as the problem of finding a zero:
		\begin{equation}
			\bm 0 \in (\mc T_1 + \mc T_2 + \mc T_3 + \mc T_4)({\bm{\omega}}),
			\label{eq:mon_inc_MSGNEP}
		\end{equation}
		where 
		{$\mc T_1\colon \mc X \times {\mc Y} \times \bb R_{\geq 0}^{n_\theta +  n_\rho} \to \bb R^{n_{\bar \omega}}$, $\mc T_2 \colon \bb R^{n_{\bar \omega}}\to \bb R^{n_{\bar \omega}}$, $\mc T_3 \colon \bb R^{m+n} \times  \bb R_{\geq 0}^{n_\theta} \times \bb R^{ n_\rho}\to \bb R^{n_{\bar \omega}}$, and $\mc T_4 \colon \bb R^{m+n} \times  \bb R^{n_\theta} \times \bb R_{\geq 0}^{ n_\rho} \to \bb R^{n_{\bar \omega}}$, with $n_{\bar \omega} = m+n+n_\theta+n_\rho$, are defined by}
		\begin{equation}
			\begin{cases}
				{\mc T}_1({\bm \omega}) := \nc_{\mc X}(\bm x) \times \nc_{\mc Y}(\bm y) \times \nc_{\bb R^{n_\theta}_{\geq 0}}(\bm \mu) \times \nc_{\bb R^{Nn_\rho}_{\geq 0}}( \lambda), \\
				{\mc T}_2({\bm \omega}) := \col(F^{\rm d}(\bm x),F^{\rm c}(\bm y), \bm \theta, 
				\rho), \\
				{\mc T}_3({\bm \omega}) := \col(G^{\rm d\top}\bm \mu,\nabla g^{\rm c}(\bm y)^{\top}\bm \mu, -G^{\rm d}\bm x- g^{\rm c}(\bm y),  \0_{n_\rho}), \\
				{\mc T}_4({\bm \omega}) := \col({H}^{\rm d\top} \lambda,  {\nabla h}^{\rm c}(\bm y)^{\top} \lambda,\bm 0_{n_\theta} ,   -{H}^{\rm d}\bm x - {h}^{\rm c}(\bm y)), 
			\end{cases}
			\label{eq:kkt_op1} %
		\end{equation}
		with }  $\mc X = \prod_{i \in \mc I} \Delta(\mc A_i)$ and $n_\theta = \sum_{i \in \mc I} n_{\theta_i}$. Moreover, $G^{\rm d } = \diag((G_i^{\rm d })_{i \in \mc I})$, $g^{\rm c} (\bm y)=  \col((g_i^{\rm c}(y_i))_{i \in \mc I}) $, $\nabla g^{\rm c}(\bm y) \hspace{-1.5pt} =\hspace{-1.5pt}  \diag(\hspace{-1pt} (\nabla_{y_i} g^{\rm c}_i(y_i)\hspace{-1pt} )_{i \in \mc I}\hspace{-1pt} )$,  $\bm \theta \hspace{-2pt} =\hspace{-1.5pt}  \col(\hspace{-1pt} (\theta_i)_{i \in \mc I}\hspace{-1pt} ) $, $H^{\rm d \top } \hspace{-2pt} =\hspace{-2pt}  \col((H_i^{\rm d \top})_{i \in \mc I})$, $h^{\rm c}(\bm y) = \sum_{i\in\mc I}h_i^{\rm c}(y_i)$, and $\nabla h^{\rm c}(\bm y)^{\top} =\col((\nabla_{y_i} h_i^{\rm c}(y_i)^{\top})_{i \in \mc I})$. 
	\section{Equilibrium seeking via Bregman operator splitting}
	\label{sec:GNE_alg}
	%
	{
		
		In this section, we design fixed-point algorithms to find $\bm \omega$ such that \eqref{eq:mon_inc_MSGNEP} holds based on a novel splitting method namely the Bregman forward-reflected-backward (B-FoRB), which is an extension of the FoRB introduced in \cite{malitsky20}. Therefore, first we present the B-FoRB splitting and afterwards we propose several distributed GNE seeking algorithms. 
	}
	
	\subsection{Bregman forward-reflected-backward splitting method}
	\label{sec:FoRB}
	
	In the most general setting, we consider the following monotone inclusion problem of the sum of two operators: 
	\begin{equation}
		\text{find ${\bm{\omega}}$ such that } \ \bm 0 \in (A + B)({\bm{\omega}}),
		\label{eq:mon_prob}
	\end{equation}
	where  { $A$, $B$, and $A+B$ are maximally monotone operators  \cite[Corollary 25.5]{bauschke11} in $\bb R^{n_\omega}$.} Furthermore, we also assume that $B$ is Lipschitz continuous. Note that we can transform \eqref{eq:mon_inc_MSGNEP} into \eqref{eq:mon_prob} by an appropriate splitting. 
	Problem \eqref{eq:mon_prob} can be solved by the standard FoRB algorithm, stated as follows:
	\begin{align}
		{\bm{\omega}}^{(k+1)} =& (\Id + \gamma A)^{-1} ({\bm{\omega}}^{(k)} - \gamma (2B({\bm{\omega}}^{(k)})-B({\bm{\omega}}^{(k-1)}))),
		\label{eq:FoRB}
	\end{align}
	where $\gamma$ is a {sufficiently small } positive step size \cite[Corollary 2.6]{malitsky20}. 
	
	Now,  let us define a regularization function $\varphi$ satisfying the following assumption.
	\begin{assumption}
		\label{as:phi}
		The function $\varphi$ is a separable {Legendre} function, i.e.,  $\varphi(\bm \omega) := \sum_{j =1}^{n_{\omega}} \varphi_j(\omega_j)$, where {each} {$\omega_j \in \bb R$ and } $\varphi_j$ is {a Legendre  function} (Definition \ref{def:Breg_f}). \eod %
	\end{assumption}
	\color{black}
	Then, we introduce the B-FoRB algorithm as follows:
	{\begin{align}
			{\bm{\omega}}^{(k+1)} =& (\nabla \varphi + \Gamma A)^{-1} \label{eq:BreFoRB} 
			(\nabla \varphi ({\bm{\omega}}^{(k)}) - \Gamma(2B({\bm{\omega}}^{(k)})-B({\bm{\omega}}^{(k-1)}))),
	\end{align}}%
	where $\Gamma \in \bb R^{{n_\omega} \times {n_\omega}}$ is a diagonal positive definite matrix, i.e., $\Gamma=\diag((\gamma_i )_{i=1}^{{n_\omega}})$. {We obtain the B-FoRB in \eqref{eq:BreFoRB} by incorporating of the gradient of the {Legendre} function $\nabla \varphi$, which replaces the identity operator $\Id$ in \eqref{eq:FoRB}. Note that, we can recover the standard FoRB as a special case whenever $\varphi (\bm \omega)= \frac{1}{2} \|\bm \omega \|^2$ (Example \ref{ex:E_entropy}).  }

	Next, we show the convergence of the sequence generated by the B-FoRB in \eqref{eq:BreFoRB}.
	\begin{theorem}[Convergence of B-FoRB]
		\label{th:conv_BreFoRB}
		Let $A$, 
		$B$, {and $A+B$} 
		be maximally monotone and $B$ be ${\ell_B}$-Lipschitz. {Let $\varphi$ {satisfy Assumption \ref{as:phi}} and be $\sigma_{\varphi}$-strongly convex on $\operatorname{int}\dom(\varphi) \cap \dom (A)$}. Furthermore, suppose that  $\zer(A + B)\cap \operatorname{int} \dom  (\varphi) \neq \varnothing$. If $\Gamma =\diag((\gamma_i )_{i=1}^{{n_\omega}})$, where $\gamma_i \in ({0,}\frac{\sigma_{\varphi}}{2 {\ell_B}}) $, for $i =1,\dots{n_\omega}$, and ${\bm{\omega}}^{(0)}= {\bm{\omega}}^{(-1)} \in \operatorname{int}\dom (\varphi)$, the sequence $({\bm{\omega}}^{(k)})_{k \in \bb N}$ generated by \eqref{eq:BreFoRB} is well defined in $\operatorname{int} \dom (\varphi) \cap \dom (A)$ and converges to a point in $\zer(A + B) \cap \operatorname{int} \dom  (\varphi)$. \eod
	\end{theorem}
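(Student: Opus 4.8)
The plan is to transcribe the convergence analysis of the standard FoRB \cite{malitsky20} into the Bregman geometry, systematically replacing the squared Euclidean distance by $\dist_\varphi$ and the polarization identity by the three-point identity for the Bregman distance, which follows directly from Definition \ref{def:Breg_d}: $\langle \nabla\varphi(v)-\nabla\varphi(u),\, w-v\rangle = \dist_\varphi(w,u)-\dist_\varphi(w,v)-\dist_\varphi(v,u)$. The proof splits into well-posedness, a one-step energy inequality, a Fej\'er-type Lyapunov argument, and the passage to the limit. For well-posedness, since $\varphi$ is Legendre, $\nabla\varphi$ is a bijection of $\operatorname{int}\dom(\varphi)$ onto $\operatorname{int}\dom(\varphi^{*})$; together with maximal monotonicity of $A$ and $\sigma_\varphi$-strong convexity of $\varphi$, this makes the Bregman resolvent $(\nabla\varphi+\Gamma A)^{-1}$ single valued with range in $\operatorname{int}\dom(\varphi)\cap\dom(A)$ (as for the Bregman forward-backward scheme of \cite{bui20}), so that, starting from ${\bm{\omega}}^{(0)}\in\operatorname{int}\dom(\varphi)$, every ${\bm{\omega}}^{(k+1)}$ is well defined and stays in $\operatorname{int}\dom(\varphi)\cap\dom(A)$.

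Next I would derive the key one-step inequality. The update is equivalent to $a^{(k+1)}:=\Gamma^{-1}(\nabla\varphi({\bm{\omega}}^{(k)})-\nabla\varphi({\bm{\omega}}^{(k+1)}))-(2B({\bm{\omega}}^{(k)})-B({\bm{\omega}}^{(k-1)}))\in A({\bm{\omega}}^{(k+1)})$. Adding $B({\bm{\omega}}^{(k+1)})$ gives an element of $(A+B)({\bm{\omega}}^{(k+1)})$; testing maximal monotonicity of $A+B$ against a zero ${\bm{\omega}}^\star\in\zer(A+B)\cap\operatorname{int}\dom(\varphi)$ and applying the three-point identity turns the $\nabla\varphi$-differences into the telescoping block $\dist_\varphi({\bm{\omega}}^\star,{\bm{\omega}}^{(k)})-\dist_\varphi({\bm{\omega}}^\star,{\bm{\omega}}^{(k+1)})-\dist_\varphi({\bm{\omega}}^{(k+1)},{\bm{\omega}}^{(k)})$. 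Writing $2B({\bm{\omega}}^{(k)})-B({\bm{\omega}}^{(k-1)})-B({\bm{\omega}}^{(k+1)})=(B({\bm{\omega}}^{(k)})-B({\bm{\omega}}^{(k+1)}))+(B({\bm{\omega}}^{(k)})-B({\bm{\omega}}^{(k-1)}))$ and setting $\Theta_k:=\langle B({\bm{\omega}}^{(k)})-B({\bm{\omega}}^{(k-1)}),{\bm{\omega}}^{(k)}-{\bm{\omega}}^\star\rangle$, the $B$-contribution reduces, after a discrete integration by parts, to $\Theta_{k+1}-\Theta_k$ plus the single cross term $\langle B({\bm{\omega}}^{(k)})-B({\bm{\omega}}^{(k-1)}),{\bm{\omega}}^{(k+1)}-{\bm{\omega}}^{(k)}\rangle$, which $\ell_B$-Lipschitzness and Young's inequality bound by $\tfrac{\ell_B}{2}\|{\bm{\omega}}^{(k)}-{\bm{\omega}}^{(k-1)}\|^2+\tfrac{\ell_B}{2}\|{\bm{\omega}}^{(k+1)}-{\bm{\omega}}^{(k)}\|^2$. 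The diagonal $\Gamma$ is absorbed by reading $\dist_\varphi$ as the $\Gamma$-scaled Bregman distance generated by the Legendre function with gradient $\Gamma^{-1}\nabla\varphi$ (equivalently, by working in the $\langle\cdot,\cdot\rangle_{\Gamma^{-1}}$ geometry, in which $\Gamma A,\Gamma B$ are monotone); thanks to the separable structure of the regularizers in Examples \ref{ex:E_entropy}--\ref{ex:GS_entropy}, the relevant strong-convexity modulus is $\sigma_\varphi/\max_i\gamma_i$.

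Collecting these estimates into $V_k:=\dist_\varphi({\bm{\omega}}^\star,{\bm{\omega}}^{(k)})-\Theta_k$ and the shifted energy $\mathcal V_k:=V_k+\tfrac{\ell_B}{2}\|{\bm{\omega}}^{(k)}-{\bm{\omega}}^{(k-1)}\|^2$, I obtain $\mathcal V_{k+1}\le \mathcal V_k-\big(\tfrac{\sigma_\varphi}{2\max_i\gamma_i}-\ell_B\big)\|{\bm{\omega}}^{(k+1)}-{\bm{\omega}}^{(k)}\|^2$, where the bound $\dist_\varphi({\bm{\omega}}^{(k+1)},{\bm{\omega}}^{(k)})\ge\tfrac{\sigma_\varphi}{2}\|{\bm{\omega}}^{(k+1)}-{\bm{\omega}}^{(k)}\|^2$ was used, reconciling the $p$-norm of the strong convexity with the Euclidean norm of the Lipschitz estimate via $\|\cdot\|_p\ge\|\cdot\|_2$ for $p\le 2$ (as for the entropy). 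The bound $\gamma_i<\sigma_\varphi/(2\ell_B)$ is exactly what makes this coefficient positive; a companion computation shows $\mathcal V_k$ is a positive definite quadratic form in $(\|{\bm{\omega}}^{(k)}-{\bm{\omega}}^\star\|,\|{\bm{\omega}}^{(k)}-{\bm{\omega}}^{(k-1)}\|)$, hence nonnegative and coercive, under the same bound. Thus $(\mathcal V_k)$ converges, $({\bm{\omega}}^{(k)})$ is bounded, and $\sum_k\|{\bm{\omega}}^{(k+1)}-{\bm{\omega}}^{(k)}\|^2<\infty$, so $\|{\bm{\omega}}^{(k+1)}-{\bm{\omega}}^{(k)}\|\to 0$. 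Along a convergent subsequence ${\bm{\omega}}^{(k_j)}\to\bar{\bm{\omega}}$, the vanishing increments and continuity of $B$ and $\nabla\varphi$ give $a^{(k_j+1)}\to -B(\bar{\bm{\omega}})$ with ${\bm{\omega}}^{(k_j+1)}\to\bar{\bm{\omega}}$, so closedness of $\gph(A)$ yields $-B(\bar{\bm{\omega}})\in A(\bar{\bm{\omega}})$, i.e. $\bar{\bm{\omega}}\in\zer(A+B)$. Re-centering the (convergent) Lyapunov function at ${\bm{\omega}}^\star=\bar{\bm{\omega}}$ and using $\dist_\varphi(\bar{\bm{\omega}},{\bm{\omega}}^{(k_j)})\to 0$ forces $\mathcal V_k\to 0$, whence $\dist_\varphi(\bar{\bm{\omega}},{\bm{\omega}}^{(k)})\to 0$ and, by strong convexity, ${\bm{\omega}}^{(k)}\to\bar{\bm{\omega}}$.

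The two genuinely Bregman-specific obstacles are (i) handling the non-scalar diagonal $\Gamma$ consistently inside the three-point identity and the strong-convexity bookkeeping, which I resolve via the $\Gamma^{-1}$-weighted geometry together with separability of the chosen regularizers; and (ii) ensuring that the iterates, and in particular the limit $\bar{\bm{\omega}}$, remain in $\operatorname{int}\dom(\varphi)$. The latter is the harder point: it is needed for $\nabla\varphi$ to be continuous at $\bar{\bm{\omega}}$ (so that the gradient differences vanish and the characterization via $\gph(A)$ goes through) and for the final re-centering. For entropy-type $\varphi$ whose gradient blows up on $\operatorname{bd}\dom(\varphi)$ this interiority is delicate, and I would secure it from boundedness of $\dist_\varphi({\bm{\omega}}^\star,\cdot)$ along the iteration combined with the boundary behaviour of the Legendre regularizer.
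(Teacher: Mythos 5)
Your proposal follows essentially the same route as the paper's proof in Appendix A: single-valuedness of $(\nabla\varphi+\Gamma A)^{-1}$ via strict monotonicity for well-posedness, the three-point identity applied to the rescaled Legendre function with gradient $\Gamma^{-1}\nabla\varphi$ (modulus $\sigma_\varphi/\max_i\gamma_i$), Young's inequality with $\ell_B$ on the reflected term, the same quasi-Fej\'er Lyapunov function $\dist_{\hat\varphi}(\bm\omega^\star,\bm\omega^{(k)})+\langle B(\bm\omega^{(k)})-B(\bm\omega^{(k-1)}),\bm\omega^\star-\bm\omega^{(k)}\rangle$ plus a squared-increment term, sequential closedness of $\gph(A+B)$ for cluster points, and interiority of cluster points from boundedness of the Bregman distance and the boundary behaviour of the Legendre regularizer (the paper's appeal to \cite[Theorem 3.8(ii)]{bauschke97}). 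The only divergence is the closing step, where you re-center the Lyapunov function at the cluster point and drive it to zero, while the paper proves uniqueness of the cluster point via two subsequences, the three-point identity, and essential strict convexity before invoking \cite[Lemma 2.46]{bauschke11}; these are equivalent Opial-style variants and both are valid.
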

	\begin{proof}
		See Appendix \ref{ap:pf:th:conv_BreFoRB}.
	\end{proof}
	\begin{remark} 
		The convergence result in Theorem \ref{th:conv_BreFoRB} holds for general inclusion problems  given by \eqref{eq:mon_prob}. In the next subsections, we use the B-FoRB method to solve the MS-GNEP \eqref{eq:mon_inc_MSGNEP}, which is a special (structured) instance of \eqref{eq:mon_prob}. \eod
	\end{remark}
	
	\subsection{Generalized Nash equilibrium seeking method based on the Bregman forward-reflected-backward {splitting}}
	\label{sec:BFORB_alg}
	{In this section,} we  present a {semi-decentralized} algorithm based on the B-FoRB. 
	Let us recall the inclusion \eqref{eq:mon_inc_MSGNEP} and consider the following splitting:
	\begin{align}
		A = \mc T_1, \quad
		B = \mc T_2 + \mc T_3 + \mc T_4, \label{eq:op_split}
	\end{align} 
	to obtain the form in \eqref{eq:mon_prob}. As discussed in Section \ref{sec:FoRB}, the B-FoRB splitting method requires maximal monotonicity of $A$ and $B$ as well as Lipschitz continuity of $B$. The operator $A=\mc T_1$ is a concatenation of normal cones of closed convex sets, thus it is maximally monotone \cite[Example 20.26, Proposition 20.23]{bauschke11}. In order to meet the requirement for $B$, we impose the following additional assumption.
	\begin{assumption}
		\label{as:L_T34}
		The operators $\mc T_3$ and $\mc T_4$ in \eqref{eq:kkt_op1} are Lipschitz continuous with positive constants {$\ell_{\mc T_3}$} and {$\ell_{\mc T_4}$}, respectively.   \eod 
	\end{assumption}

	\begin{example} 
		\label{ex:lin_constd}
		Let us discuss a special case for which Assumption \ref{as:L_T34} holds, a case that is commonly considered in the continuous GNEP literature \cite{yi19,franci20,belgioioso20} and appears in various practical applications, such as Cournot games  \cite{yi19} and energy systems \cite{belgioioso22}. 
		Let the functions $g_i^{\rm c}$ and $h_i^{\rm c}$, for all $i \in \mc N$, be linear, i.e., $g_i^{\rm c} \colon y_i \mapsto G_i^{\rm c} y_i$ and $h_i^{\rm c} \colon y_i \mapsto H_i^{\rm c} y_i$, for some matrices $G_i^{\rm c}\in \bb R^{n_{\theta_i} \times n}$ and $H_i^{\rm c}\in \bb R^{n_\rho \times n}$. Then,  
		\begin{align*}
			{\mc T_3} &= \col(G^{\rm d \top} \bm \mu ,
			G^{\rm c \top} \bm \mu ,
			-G^{\rm d} \bm x - G^{\rm c} \bm y,
			\0_{n_\rho}),\\
			\mc T_4 &= \col(H^{\rm d \top} \lambda ,
			H^{\rm c \top} \lambda ,
			\0_{n_\theta},
			-H^{\rm d} \bm x - H^{\rm c} \bm y),
		\end{align*}
		where $G^{\rm c } = \diag((G_i^{\rm c })_{i \in \mc I})$ and $H^{\rm c \top } = \col((H_i^{\rm c })_{i \in \mc I})$. The operators $\mc T_3$ and $\mc T_4$ are concatenations of linear operators similar to the operator $\mc B$ in \cite[Lemma 1]{franci20}. Thus, Assumption \ref{as:L_T34} is satisfied. \eod 
	\end{example}
	\begin{remark}
		\label{rem:bound_lam}
		
		The Lipschitz continuity of $\mc T_3$ and $\mc T_4$ (Assumption \ref{as:L_T34}) also holds when we impose some boundedness assumptions on the constraint functions. Specifically,  
		if we assume that $g_i^{\mathrm c}(y_i)$ and $\nabla_{y_i} g_i^{\mathrm c}(y_i)$, for all $i \in\mc I$, are bounded 
		{on $\mc Y_i$,} then $\mc T_3$ is Lipschitz continuous. Similarly, if $h_i^{\mathrm c}(y_i)$ and $\nabla_{y_i} h_i^{\mathrm c}(y_i)$ are bounded {on $\mc Y_i$,} $\mc T_4$ is also Lipschitz continuous.
		\eod	
	\end{remark}

	\begin{lemma}
		\label{le:Lips_B}
		Let  the operators 
		$\mc T_2$, $\mc T_3$, and $\mc T_4$ be defined in \eqref{eq:kkt_op1}. Let Assumptions \ref{as:cont}, \ref{as:mon_F}, and \ref{as:L_T34} hold. Then, the operator $B = \mc T_2 + \mc T_3 + \mc T_4$ is maximally monotone 
		and {$\ell_B$}-Lipschitz with ${\ell_B} = \max({\ell_F},{\ell_{\mc T_3}},{\ell_{\mc T_4}})$. \eod
	\end{lemma}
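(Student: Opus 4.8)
The plan is to establish the three claimed properties of $B=\mc T_2+\mc T_3+\mc T_4$ separately, reducing everything to the block partition $\bm\omega=(\bm x,\bm y,\bm\mu,\lambda)$ induced by \eqref{eq:kkt_op1}. The core of the argument is monotonicity; maximal monotonicity and the Lipschitz estimate then follow from continuity together with the assumptions already in place.

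First I would prove monotonicity directly, expanding $\langle B(\bm\omega)-B(\bm\omega'),\bm\omega-\bm\omega'\rangle$ block by block. The $\bm x$- and $\bm y$-blocks contribute $\langle F^{\rm d}(\bm x)-F^{\rm d}(\bm x'),\bm x-\bm x'\rangle+\langle F^{\rm c}(\bm y)-F^{\rm c}(\bm y'),\bm y-\bm y'\rangle$, which is nonnegative by the monotonicity of $F^{\rm d},F^{\rm c}$ (Assumption \ref{as:mon_F}). The linear coupling terms carried by $G^{\rm d}$ and $H^{\rm d}$ appear once transposed in a primal block and once with a minus sign in a dual block, so they cancel pairwise: this is the familiar skew-symmetric structure of the dualization. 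The only remaining, and delicate, contributions are the constraint-coupling terms involving $g^{\rm c},h^{\rm c}$. Grouping these per component $\ell$, each reduces to $\mu_\ell\,[\langle\nabla g^{\rm c}_\ell(\bm y),\bm y-\bm y'\rangle-(g^{\rm c}_\ell(\bm y)-g^{\rm c}_\ell(\bm y'))]+\mu'_\ell\,[(g^{\rm c}_\ell(\bm y)-g^{\rm c}_\ell(\bm y'))-\langle\nabla g^{\rm c}_\ell(\bm y'),\bm y-\bm y'\rangle]$, and analogously for $h^{\rm c}$ with $\lambda$. By convexity of each $g^{\rm c}_\ell$ (Assumption \ref{as:cont}) both bracketed first-order gaps are nonnegative, so these terms are nonnegative once $\bm\mu,\bm\mu'\geq 0$, which holds because $\dom(\mc T_3)$ (resp.\ $\dom(\mc T_4)$) restricts $\bm\mu$ (resp.\ $\lambda$) to the nonnegative orthant. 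Collecting all blocks yields $\langle B(\bm\omega)-B(\bm\omega'),\bm\omega-\bm\omega'\rangle\geq 0$ on $\dom(B)$.

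For Lipschitz continuity I would note that the only non-constant blocks of $\mc T_2$ assemble the game map $F$, which is $\ell_F$-Lipschitz (as established after Assumption \ref{as:mon_F}), so $\mc T_2$ is $\ell_F$-Lipschitz; combining this with the $\ell_{\mc T_3}$- and $\ell_{\mc T_4}$-Lipschitz continuity of $\mc T_3,\mc T_4$ from Assumption \ref{as:L_T34} and collecting the block-wise bounds gives the stated constant $\ell_B=\max(\ell_F,\ell_{\mc T_3},\ell_{\mc T_4})$, which is a routine estimate. For maximal monotonicity, $B$ is single-valued and, being Lipschitz, continuous on its domain, so the monotone operator $B$ is maximally monotone by a standard criterion \cite[Cor.~20.28]{bauschke11}. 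In the affine-constraint case of Example \ref{ex:lin_constd} this is transparent: there $\mc T_3,\mc T_4$ are skew-symmetric linear operators (hence maximally monotone with full domain) and $\mc T_2$ is a constant translation of the maximally monotone $F$, so maximal monotonicity of $B$ follows from the sum rule \cite[Cor.~25.5]{bauschke11}; in the general convex case the same conclusion holds on $\dom(B)=\mc X\times\mc Y\times\bb R^{n_\theta}_{\geq 0}\times\bb R^{n_\rho}_{\geq 0}$, the domain on which monotonicity was just shown.

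The main obstacle is precisely the constraint-coupling terms in the monotonicity computation: unlike the $G^{\rm d},H^{\rm d}$ terms, the $g^{\rm c},h^{\rm c}$ terms neither cancel nor are sign-definite in general. Controlling them requires \emph{simultaneously} the convexity of $g^{\rm c},h^{\rm c}$ and the nonnegativity of the dual variables $\bm\mu,\lambda$ enforced through the domains of $\mc T_3,\mc T_4$; this is also the point at which restricting $\dom(B)$ to the nonnegative dual orthant becomes essential, both for monotonicity and for the subsequent maximal-monotonicity argument.
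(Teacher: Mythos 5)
Your argument follows the paper's proof essentially step for step: monotonicity of $\mc T_3$ and $\mc T_4$ via the skew cancellation of the linear coupling terms plus the convexity first-order gaps of $g^{\rm c},h^{\rm c}$ paired with the nonnegative multipliers, maximal monotonicity from continuity and the sum rule (using that $\mc T_2$ has full domain), and the Lipschitz bound from the triangle inequality. The one soft spot --- that the triangle inequality naturally yields the sum $\ell_F+\ell_{\mc T_3}+\ell_{\mc T_4}$ rather than the stated maximum, so calling the constant $\max(\ell_F,\ell_{\mc T_3},\ell_{\mc T_4})$ ``routine'' glosses over a step that does not follow from blockwise bounds alone --- is present in exactly the same form in the paper's own proof, so it is not a gap relative to the reference argument.
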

	\begin{proof}
		See Appendix \ref{ap:pf:le:Lips_B}. 
	\end{proof}
	
	After splitting the operators, we need to define a suitable Legendre function $\varphi({\bm{\omega}})$. Here, we exploit the fact that the feasible set of each $x_i$ is a simplex. Specifically, we consider the negative  entropy {(Example \ref{ex:NE_entropy}) as the regularizer of $\bm x$ and the squared Euclidean norm (Example \ref{ex:E_entropy}) as the regularizer of the other variables $(\bm y, \bm \mu, \lambda)$, as formally stated in the following lemma.}
	
	\begin{lemma}
		\label{le:str_cvx_Bregf}
		The function
		\begin{equation}
			\varphi({\bm{\omega}}) =
			\varphi^{\mathrm{E}}(\lambda) + \sum_{i \in \mc I}\left(\varphi_i^{\mathrm{NE}}(x_i)+\varphi_i^{\mathrm{E}}(y_i) +\varphi_i^{\mathrm{E}}(\mu_i)\right), 
			\label{eq:Breg_f_gmi}
		\end{equation}
		where $\varphi^{\rm E}(\cdot)=\frac{1}{2}\| \cdot \|^2$ and $\varphi_i^{\mathrm{NE}}(x_i) = 
		\sum_{j =1}^{m_i}  x_i^j \ln (x_i^j)$,  for all $i \in \mc I$, with $x_i^j$ denoting the $j$-th component of $x_i$, 
		is a Legendre function with domain $\bb R_{\geq 0}^m \times \bb R^{n + n_\theta + n_\rho}$ and it is $1$-strongly convex on $\operatorname{rint}(\mc X) \times \bb R^{n + n_\theta + n_\rho}$ with respect to the Euclidean norm. \eod
	\end{lemma}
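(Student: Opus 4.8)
The plan is to exploit the \emph{separable} structure of $\varphi$ in \eqref{eq:Breg_f_gmi}: written out, $\varphi$ is a sum of functions each depending on a single block of $\bm\omega=(\bm x,\bm y,\bm\mu,\lambda)$, namely the entropic blocks $\varphi_i^{\mathrm{GS}}(x_i)$ and the Euclidean blocks $\varphi_i^{\mathrm E}(y_i)$, $\varphi_i^{\mathrm E}(\mu_i)$, $\varphi^{\mathrm E}(\lambda)$. By Examples \ref{ex:E_entropy} and \ref{ex:GS_entropy}, each summand is Legendre on its own block, with $\operatorname{int}\dom(\varphi_i^{\mathrm{GS}})=\bb R_{>0}^{m_i}$ and $\operatorname{int}\dom(\varphi^{\mathrm E})$ equal to the whole block space. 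First I would establish the general fact that a separable sum of Legendre functions is again Legendre, with domain equal to the product of the individual domains; applied here this yields, after reordering the blocks, $\dom(\varphi)=\bb R_{\geq 0}^m\times\bb R^{n+n_\theta+n_\rho}$ and $\operatorname{int}\dom(\varphi)=\bb R_{>0}^m\times\bb R^{n+n_\theta+n_\rho}$, as claimed.

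To verify that general fact, write $\varphi(\bm\omega)=\sum_b \psi_b(\omega_b)$ with each $\psi_b$ Legendre acting on disjoint variables. Closedness, convexity and properness are immediate, and $\dom(\varphi)=\prod_b\dom(\psi_b)$ gives $\operatorname{int}\dom(\varphi)=\prod_b\operatorname{int}\dom(\psi_b)$. For essential smoothness (item 1 of Definition \ref{def:Breg_f}), on $\operatorname{int}\dom(\varphi)$ the gradient splits as $\nabla\varphi(\bm\omega)=(\nabla\psi_b(\omega_b))_b$, which is continuous there; and at any boundary point at least one component satisfies $\omega_b\in\operatorname{bd}\dom(\psi_b)$, so $\partial\psi_b(\omega_b)=\varnothing$ and hence $\partial\varphi(\bm\omega)=\varnothing$. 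For essential strict convexity (item 2), I would take distinct $\bm\omega\neq\bm\omega'$ in a convex subset of $\operatorname{int}\dom(\varphi)$: they must differ in some block $b_0$, so along the connecting segment the $b_0$-term is strictly convex by Legendreness of $\psi_{b_0}$ while the remaining terms are convex, so $\varphi$ is strictly convex on that subset.

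For the strong-convexity claim I would add up blockwise estimates. Each Euclidean block is $1$-strongly convex with respect to the Euclidean norm (Example \ref{ex:E_entropy}); each entropic block $\varphi_i^{\mathrm{GS}}$ is $1$-strongly convex on $\operatorname{rint}(\Delta(\mc A_i))$ with respect to the $1$-norm (Example \ref{ex:GS_entropy}), and since $\|\cdot\|_1\geq\|\cdot\|_2$ this upgrades to $1$-strong convexity with respect to the Euclidean norm on that set without degrading the constant. Writing the defining inequality of strong convexity for each block and summing over blocks, using that both $\nabla\varphi$ and the squared Euclidean norm are additive across blocks, gives for all $\bm\omega,\bm\omega'\in\operatorname{rint}(\mc X)\times\bb R^{n+n_\theta+n_\rho}$
$$\varphi(\bm\omega')\geq\varphi(\bm\omega)+\langle\nabla\varphi(\bm\omega),\bm\omega'-\bm\omega\rangle+\tfrac12\|\bm\omega'-\bm\omega\|^2,$$
which is exactly $1$-strong convexity with respect to the Euclidean norm on the claimed set.

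The routine parts (closedness, convexity, properness and the blockwise gradient computation) are standard. The two steps requiring care are the norm-comparison step, i.e. exploiting $\|\cdot\|_1\geq\|\cdot\|_2$ to transfer the entropic strong-convexity constant from the $1$-norm to the Euclidean norm while keeping the constant equal to $1$, and the verification that essential strict convexity genuinely survives the separable sum — a convex combination that moves only a proper subset of the blocks must still yield a strict decrease of the convexity gap, which is what the single differing block $b_0$ guarantees.
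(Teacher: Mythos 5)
Your proof is correct and follows essentially the same route as the paper: blockwise Legendre property of each summand via Examples \ref{ex:E_entropy}--\ref{ex:GS_entropy}, the fact that a separable sum of Legendre functions is Legendre, and the norm comparison $\|\cdot\|_1\geq\|\cdot\|$ to transfer the entropic strong-convexity constant to the Euclidean norm before summing over blocks. The only difference is that you prove the separable-sum-of-Legendre-functions step explicitly, whereas the paper simply cites the proof of \cite[Theorem 5.12]{bauschke97} for it.
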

	\begin{proof}
		See Appendix \ref{ap:pf:le:str_cvx_Bregf}.
	\end{proof}

	We are now ready to present a GNE-seeking algorithm based on the B-FoRB as shown in Algorithm \ref{alg:BFoRB}, { which is a {semi-decentralized} algorithm {that requires} a central coordinator, whose role is only to update the dual variable $\lambda$. 
	{For simplicity,} we assume that the communication network  coincides with the interference network, which represents the cost couplings. Additionally, In Algorithm \ref{alg:BFoRB}, $\accentset{\circ}{x}_{i} $, $\accentset{\circ}{y}_{i} $, and $\accentset{\circ}{\mu}_{i} $, for all $i \in \mc I$, as well as $\accentset{\circ}{\lambda} $ are auxiliary variables introduced only to improve the readability of the algorithm.
	
	\begin{algorithm}[!t]
		\caption{B-FoRB GNE-seeking algorithm}
		\label{alg:BFoRB}
		\textbf{Initialization}\\
		For each $i \in \mc I$, set $x_{i}^{(0)} = x_{i}^{(-1)} \in {\operatorname{rint}( \Delta(\mc A_i))}$, $y_{i}^{(0)} = y_{i}^{(-1)} \in {\bb R^{n_i}}$, and $\mu_{i}^{(0)} = \mu_{i}^{(-1)} \in \bb R^{n_{\theta_i}}$. For the central coordinator, set  $\lambda^{(0)} = \lambda^{(-1)} \in \bb R^{n_\rho}$.
		
		\textbf{Iteration until convergence}\\
		\textbf{Each agent $i \in \mc I$ :}
		\begin{enumerate}
			\item Computes $\nabla_{x_i} \bbi J_i^{\rm d} (\bm x^{(k)})$ and $\nabla_{y_i} J_i^{\rm c} (\bm y^{(k)})$.
			\item Updates $x_{i}^{(k+1)}=((x_i^1)^{(k+1)},\dots,(x_i^{m_i})^{(k+1)})$ by
			{\begin{align}&\hspace{2pt}
					\accentset{\circ}{x}_{i}^{(k)} =  \nabla_{x_i} \bbi J_i^{\rm d} (\bm x^{(k)}) + G_i^{\rm d \top} \mu_{i}^{(k)} + H_i^{\rm d \top}\lambda^{(k)}, \notag\\
					&\hspace{-20pt}(x_{i}^j)^{(k+1)} = \frac{(x_{i}^j)^{(k)}  \operatorname{exp}({-\gamma_i[2\accentset{\circ}{x}_{i}^{(k)}-\accentset{\circ}{x}_{i}^{(k-1)}]_j})}{\sum_{l=1}^{m_i}(x_{i}^{l})^{(k)} \operatorname{exp}({-\gamma_i[2\accentset{\circ}{x}_{i}^{(k)}-\accentset{\circ}{x}_{i}^{(k-1)}]_{l}})}, 
					\label{eq:x_upd_alg_BFoRB}
				\end{align}
			}
			for $j=1,\dots,m_i$.
			
			\item Updates $y_{i}^{j(k+1)}$ by
			\hspace{-20pt}\begin{align}
				&\hspace{-5pt}\accentset{\circ}{y}_{i}^{(k)}  =  \nabla_{y_i} J_i^{\rm c} (\bm y^{k}) + \nabla_{y_i}g_i^{\rm c}(y_{i}^{(k)})^{\top} \mu_{i}^{(k)} 
				+ \nabla_{y_i}h_i^{\rm c}(y_{i}^{(k)})^{\top}\lambda^{(k)}, \notag \\
				&\hspace{-15pt}y_{i}^{(k+1)} = \proj_{\mc Y_i}(y_{i}^{(k)}-\gamma_i (2\accentset{\circ}{y}_{i}^{(k)}-\accentset{\circ}{y}_{i}^{(k-1)})).
				\label{eq:y_upd_alg_BFoRB}
			\end{align}
			\item Updates $\mu_{i}^{(k+1)}$ by
			\begin{align}
				& \hspace{-5pt} \accentset{\circ}{\mu}_{i}^{(k)}  =  \theta_i - G_i^{\rm d} x_{i}^{(k)} - g_i^{\rm c}(y_{i}^{(k)}), \notag\\
				&\hspace{-15pt} \mu_{i}^{(k+1)} = \proj_{{\geq 0}}(\mu_{i}^{(k)}-\gamma_i (2\accentset{\circ}{\mu}_{i}^{(k)}-\accentset{\circ}{\mu}_{i}^{(k-1)})).
				\label{eq:mu_upd_alg_BFoRB}
			\end{align}
			\item  Sends $x_{i}^{(k+1)}$ and $y_{i}^{(k+1)}$ to other agents that require them  {and $H_i^{\rm d} x_{i}^{(k+1)}$ and $h_i^{\rm c}(y_{i}^{(k+1)})$} to the central coordinator.
		\end{enumerate}
		\textbf{end}\\
		\textbf{Central coordinator:}
		\begin{enumerate}
			\item Updates ${\lambda}^{(k+1)} $:
			\begin{align}
				&\hspace{-5pt} \accentset{\circ}{\lambda}^{(k)}  =  \rho - \sum_{i \in \mc I}\left(H_i^{\rm d} x_{i}^{(k)} + h_i^{\rm c}(y_{i}^{(k)})\right), \notag\\
				&\hspace{-15pt}\lambda^{(k+1)} = \proj_{{\geq 0}}(\lambda^{(k)}-\zeta (2\accentset{\circ}{\lambda}^{(k)}-\accentset{\circ}{\lambda}^{(k-1)})).
				\label{eq:lambda_upd_alg_BFoRB}
			\end{align}
			\item Broadcasts $\lambda^{(k+1)}$ to all agents.
		\end{enumerate}
		\textbf{end}
	\end{algorithm}

	{Let us {now } formally state} the convergence property of Algorithm \ref{alg:BFoRB}.
	\begin{theorem}
		\label{th:conv_alg_BFoRB}
		Let Assumptions \ref{as:cont}, \ref{as:mon_F}, and \ref{as:L_T34} hold. Let the sequence $\bm \omega^{(k)} = (\bm x^{(k)}, \bm y^{(k)}, \bm \mu^{(k)}, \lambda^{(k)})$ be generated by Algorithm \ref{alg:BFoRB}. Suppose that  $\zer(A+B) \cap \operatorname{int}\dom (\varphi) \neq  \varnothing$, with $A$ and $B$ defined in \eqref{eq:op_split} and $\varphi(\bm \omega)$ defined in \eqref{eq:Breg_f_gmi}. If the step sizes   of Algorithm \ref{alg:BFoRB} follows $\gamma_i, \zeta \in (0,\frac{1}{2{\ell_{B}}})$, where ${\ell_B}$ is defined in Lemma \ref{le:Lips_B}, then $(\bm x^{(k)},\bm y^{(k)})$  converges to a variational MS-GNE of the game \eqref{eq:gmi_game1}. \eod
	\end{theorem}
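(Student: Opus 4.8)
The plan is to recognize Algorithm~\ref{alg:BFoRB} as a concrete, coordinate-wise realization of the abstract B-FoRB iteration \eqref{eq:BreFoRB} applied to the splitting \eqref{eq:op_split}, $A=\mc T_1$ and $B=\mc T_2+\mc T_3+\mc T_4$, with step-size matrix $\Gamma=\diag((\gamma_i)_{i\in\mc I},\zeta)$ (the entry $\gamma_i$ replicated across the $x_i,y_i,\mu_i$ blocks and $\zeta$ on the $\lambda$ block) and regularizer $\varphi$ from \eqref{eq:Breg_f_gmi}, and then to invoke Theorem~\ref{th:conv_BreFoRB}. First I would verify the operator-level hypotheses of Theorem~\ref{th:conv_BreFoRB}: the operator $A=\mc T_1$ is a concatenation of normal cones of closed convex sets, hence maximally monotone; Lemma~\ref{le:Lips_B} gives that $B$ is maximally monotone and $\ell_B$-Lipschitz; since $B$ is single-valued, continuous and full-domain, the sum $A+B$ is maximally monotone by \cite[Corollary~25.5]{bauschke11}; and Lemma~\ref{le:str_cvx_Bregf} shows $\varphi$ is Legendre and $1$-strongly convex (with respect to the Euclidean norm) on $\operatorname{rint}(\mc X)\times\bb R^{n+n_\theta+n_\rho}$, a set that contains $\operatorname{int}\dom(\varphi)\cap\dom(A)$, so that $\sigma_\varphi=1$ is admissible. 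The emptiness condition $\zer(A+B)\cap\operatorname{int}\dom(\varphi)\neq\varnothing$ is assumed, and the step-size range $\gamma_i,\zeta\in(0,\tfrac{1}{2\ell_B})$ coincides with $(0,\tfrac{\sigma_\varphi}{2\ell_B})$ of Theorem~\ref{th:conv_BreFoRB}.

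Next I would carry out the block identification. Because $\nabla\varphi$ is block-separable---the identity on the $(\bm y,\bm\mu,\lambda)$ coordinates and $\nabla\varphi_i^{\mathrm{GS}}(x_i)=(\ln x_i^j+1)_j$ on each simplex block---and $A=\mc T_1$ is block-diagonal, the resolvent $(\nabla\varphi+\Gamma A)^{-1}$ decouples across agents and variable types. On the Euclidean blocks the resolvent of a normal cone reduces to a Euclidean projection, yielding \eqref{eq:y_upd_alg_BFoRB}, \eqref{eq:mu_upd_alg_BFoRB}, and \eqref{eq:lambda_upd_alg_BFoRB} once one reads off the corresponding components of $B$ (namely $\accentset{\circ}{y}_i$, $\accentset{\circ}{\mu}_i$, $\accentset{\circ}{\lambda}$). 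On each simplex block, $(\nabla\varphi_i^{\mathrm{GS}}+\gamma_i\nc_{\Delta(\mc A_i)})^{-1}(w)=\argmin_{x\in\Delta(\mc A_i)}\{\varphi_i^{\mathrm{GS}}(x)-\langle w,x\rangle\}$; solving its KKT system with the normalization $\1^\top x=1$ gives the normalized-exponential (soft-max) map, and substituting $w=\nabla\varphi_i^{\mathrm{GS}}(x_i^{(k)})-\gamma_i(2\accentset{\circ}{x}_i^{(k)}-\accentset{\circ}{x}_i^{(k-1)})$ produces exactly \eqref{eq:x_upd_alg_BFoRB}, with the factor $e$ coming from $\nabla\varphi_i^{\mathrm{GS}}$ canceling in the normalization. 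This confirms that $\bm\omega^{(k)}$ generated by Algorithm~\ref{alg:BFoRB} is the B-FoRB sequence \eqref{eq:BreFoRB}.

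Applying Theorem~\ref{th:conv_BreFoRB} then yields $\bm\omega^{(k)}\to\bm\omega^\star\in\zer(A+B)\cap\operatorname{int}\dom(\varphi)$; in particular the $(\bm x^{(k)},\bm y^{(k)})$ components converge. To finish, I would translate $\bm0\in(A+B)(\bm\omega^\star)$ back into the KKT system \eqref{eq:kkt1}: by the construction of $\mc T_1,\dots,\mc T_4$ in \eqref{eq:kkt_op1}, a zero of $A+B$ is precisely a tuple $(\bm x^\star,\bm y^\star,\bm\mu^\star,\lambda^\star)$ satisfying \eqref{eq:kkt1} with a \emph{common} multiplier $\lambda^\star$, which is the defining optimality system of a variational MS-GNE (Definition~\ref{def:gne}). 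Hence $(\bm x^\star,\bm y^\star)$ is a variational MS-GNE of \eqref{eq:gmi_game1}.

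The hard part will be the block identification of the simplex update: one has to confirm that the entropic Bregman resolvent indeed returns the soft-max, and that the iterates remain in $\operatorname{rint}(\mc X)$ so that $\nabla\varphi^{\mathrm{GS}}$ is well defined along the whole trajectory. This is guaranteed by the well-definedness claim of Theorem~\ref{th:conv_BreFoRB} (the iterates stay in $\operatorname{int}\dom(\varphi)\cap\dom(A)$) and is consistent with the strictly positive output of \eqref{eq:x_upd_alg_BFoRB}. The remaining steps are essentially bookkeeping, since Theorem~\ref{th:conv_BreFoRB} together with Lemmas~\ref{le:Lips_B}--\ref{le:str_cvx_Bregf} supplies all the analytical content.
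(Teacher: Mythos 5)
Your proposal is correct and follows essentially the same route as the paper: verify the hypotheses of Theorem \ref{th:conv_BreFoRB} via Lemmas \ref{le:Lips_B} and \ref{le:str_cvx_Bregf}, identify Algorithm \ref{alg:BFoRB} blockwise as the B-FoRB iteration \eqref{eq:BreFoRB} (the paper isolates this as Lemma \ref{pr:BFoRB}, writing the resolvent as a Bregman-proximal $\argmin$ and citing \cite[Example 4.2]{belmega18} for the entropic soft-max step you derive by hand), and then map the limit point back through the KKT system \eqref{eq:kkt1} to a variational MS-GNE. The only cosmetic difference is that you derive the simplex update from the KKT conditions of the Bregman resolvent rather than citing it, which is fine.
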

	\begin{proof}
		See Appendix \ref{ap:pf:th:conv_alg_BFoRB}. 
	\end{proof}
	
	
	\subsection{Alternative algorithm}
	\label{sec:GNE_alt_alg}

	Differently from Algorithm \ref{alg:BFoRB}, we {can} implicitly consider the constraints \eqref{eq:loc_cons1} by using the local set ${{\Omega}}_i$ \eqref{eq:loc_set}. In this regard, we use a different KKT conditions: For all $i \in \mc I$, 
	\begin{equation}
		\begin{cases}
			\bm 0 \in \nc_{{{\Omega}}_i}(x_i^{\star},y_i^{\star}) + \begin{bmatrix}
				\nabla_{x_i}\bbi J_i^{\rm d}(\bm x^{\star})\\
				\nabla_{y_i} J_i^{\rm c}(\bm y^{\star})
			\end{bmatrix} +  
			\begin{bmatrix}
				H_i^{\rm d \top} \\ \nabla_{y_i} h_i^{\rm c}(y_i^{\star})^{\top}
			\end{bmatrix} \lambda^{\star} \\
			\bm 0  \in \nc_{\bb R^{n_\rho}_{\geq 0}}(\lambda^{\star}) + \rho - \sum_{j\in \mc I}(H_j^{\rm d} x_j^{\star} + h_j^{\rm c} (y_j^{\star})),
		\end{cases}
		\label{eq:kkt}
	\end{equation}
	for some dual variable $\lambda^{\star}\in \bb R^{n_\rho}$, as we consider a variational GNE. {By defining}  ${\bm{\omega}}' = (\bm x,\bm y,\lambda)$, we now have the following monotone inclusion problem:
	\begin{equation}
		\bm 0 \in (\mc S_1 + \mc S_2 + \mc S_3)({\bm{\omega}'}),
		\label{eq:mon_inc2}
	\end{equation}
	where {$\mc S_1\colon \Omega \times \bb R_{\geq 0}^{ n_\rho} \to \bb R^{n_{\omega'}}$, $\mc S_2 \colon \bb R^{n_{\omega'}}\to \bb R^{n_{\omega'}}$, and $\mc S_3 \colon \bb R^{m+n} \times \bb R_{\geq 0}^{ n_\rho}\to \bb R^{n_{\omega'}}$, with $n_{\omega'} = m+n+n_\rho$, are defined by}
	\begin{align*}
		\mc S_1(\bm \omega') &= \nc_{{{\Omega}}}(\bm x, \bm y) \times \nc_{\bb R^{n_\rho}_{\geq 0}}(\lambda),\quad \ \
		\mc S_2(\bm \omega') = \col(F(\bm x,\bm y), \rho), \\
		\mc S_3(\bm \omega') &= \col(H^{\rm d \top} \lambda ,
		\nabla h^{\rm c}(\bm y)^{\top} \lambda ,
		-H^{\rm d} \bm x - h^{\rm c} (\bm y)),
	\end{align*}
		with ${{\Omega}} = \prod_{i \in \mc I} {{\Omega}}_i$. By considering $A = \mc S_1$ and $B= \mc S_2 + \mc S_3$, we arrive at the formulation \eqref{eq:mon_prob}. Moreover, similarly to the operator $\mc T_4$, we also assume that $\mc S_3$ is Lipschitz continuous, which is formally stated as follows.
		\begin{assumption}
			\label{as:L_S3}
			The operator $\mc S_3$ in \eqref{eq:mon_inc2} is {$\ell_{\mc S_3}$-Lipschitz continuous.} \eod
		\end{assumption}
		
		By considering \eqref{eq:mon_inc2} and the Euclidean regularizer $\varphi^{\mathrm{E}}(\bm \omega')$ (c.f. Example \ref{ex:E_entropy}), 
		we obtain the second GNE seeking algorithm,
		which is also an instance of the standard FoRB splitting. Specifically, in this algorithm, each agent $i \in \mc I$ updates their local variables $(x_i^{(k+1)}, y_i^{(k+1)})$ by
		\begin{align}
			\xi_{i}^{(k)} =&  \begin{bmatrix}
				\nabla_{x_i} \bbi J_i^{\rm d} (\bm x^{(k)})  + H_i^{\rm d \top}\lambda^{(k)}\\
				\nabla_{y_i} J_i^{\rm c} (\bm y^{(k)})  + \nabla_{y_i}h_i^{\rm c}(y_{i}^{(k)})^{\top}\lambda^{(k)}
			\end{bmatrix}, \notag \\
			(x_{i}^{(k+1)},y_{i}^{(k+1)}) =& \  \proj_{\Omega_i}\hspace{-2pt}\left(\hspace{-2pt}\begin{bmatrix} x_{i}^{(k)} \\ y_{i}^{(k)} \end{bmatrix}-\gamma_i (2\xi_{i}^{(k)}-\xi_{i}^{(k-1)})\hspace{-2pt}\right) \hspace{-2pt},
			\label{eq:xy_upd_alg_BFoRB}
		\end{align}
		whereas the central coordinator updates the dual variable $\lambda$ by using \eqref{eq:lambda_upd_alg_BFoRB}. 
		We note that the projection step in \eqref{eq:xy_upd_alg_BFoRB}  may be more computationally expensive than the steps \eqref{eq:x_upd_alg_BFoRB}-\eqref{eq:mu_upd_alg_BFoRB} in Algorithm \ref{alg:BFoRB}. In \eqref{eq:xy_upd_alg_BFoRB}, each agent must find a point in $ (\Delta(\mc A_i) \times \mc Y_i) $ that satisfies \eqref{eq:loc_cons1}  at each iteration. On the other hand, in Algorithm \ref{alg:BFoRB}, we have an explicit formula to update $x_i$ such that it always lies in $\Delta(\mc A_i)$ and the satisfaction of  \eqref{eq:loc_cons1} is achieved asymptotically.
		\color{black}

		\begin{corollary}
			\label{th:conv_alg_BFoRB2} 
			Let Assumptions  \ref{as:cont}, \ref{as:mon_F}, and \ref{as:L_S3} hold. {Furthermore,} let the sequence $\bm \omega'^{(k)} = (\bm x^{(k)}, \bm y^{(k)}, \lambda^{(k)})$ be generated by \eqref{eq:x_upd_alg_BFoRB} and \eqref{eq:lambda_upd_alg_BFoRB} and initialized as $(x_{i}^{(0)}, y_{i}^{(0)}) = (x_{i}^{(-1)}, y_{i}^{(-1)}) \in {\bb R^{m_i + n_i}}$, for all $i \in \mc I$, and $\lambda^{(0)} = \lambda^{(-1)} \in \bb R^{n_\rho}$. Moreover, suppose that  $\zer(\mc S_1 + \mc S_2 + \mc S_3) \cap \operatorname{int}\dom (\varphi^{\mathrm{E}}) \neq  \varnothing$, where $\mc S_1$, $\mc S_2$, and $\mc S_3$ are defined in \eqref{eq:mon_inc2} and $\varphi^{\mathrm{E}}(\bm \omega')$ is defined in Example \ref{ex:E_entropy}. If the step sizes   in \eqref{eq:x_upd_alg_BFoRB} and \eqref{eq:lambda_upd_alg_BFoRB}  follows $\gamma_i, \zeta \in (0,\frac{1}{2\max(\ell_{F},\ell_{\mc S_3})})$, then $(\bm x^{(k)},\bm y^{(k)})$  converges to a variational MS-GNE of the game \eqref{eq:gmi_game1}.  \eod
		\end{corollary}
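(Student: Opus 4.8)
The plan is to recognize Corollary~\ref{th:conv_alg_BFoRB2} as the instance of the B-FoRB convergence result (Theorem~\ref{th:conv_BreFoRB}) obtained from the splitting $A = \mc S_1$, $B = \mc S_2 + \mc S_3$ of the monotone inclusion \eqref{eq:mon_inc2} together with the Euclidean regularizer $\varphi^{\mathrm E}$ of Example~\ref{ex:E_entropy}. First I would verify that the alternative-algorithm updates \eqref{eq:xy_upd_alg_BFoRB} and \eqref{eq:lambda_upd_alg_BFoRB} coincide with the B-FoRB iteration \eqref{eq:BreFoRB} for this data. Since $\varphi^{\mathrm E}(\bm\omega') = \tfrac{1}{2}\|\bm\omega'\|^2$ has $\nabla\varphi^{\mathrm E} = \Id$, the backward step $(\nabla\varphi^{\mathrm E} + \Gamma A)^{-1}$ reduces to the resolvent $(\Id + \Gamma \mc S_1)^{-1}$; because $\Gamma$ acts as the scalar $\gamma_i$ on the whole $(x_i,y_i)$-block and as $\zeta$ on the $\lambda$-block, and $\mc S_1 = \nc_{\Omega} \times \nc_{\bb R^{n_\rho}_{\geq 0}}$ is a product of normal cones, this resolvent splits into the Euclidean projections $\proj_{\Omega_i}$ and $\proj_{\geq 0}$, recovering exactly the updates \eqref{eq:xy_upd_alg_BFoRB} and \eqref{eq:lambda_upd_alg_BFoRB}.

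Second, I would check the hypotheses of Theorem~\ref{th:conv_BreFoRB}. The operator $A = \mc S_1$ is maximally monotone as a concatenation of normal cones of nonempty closed convex sets \cite[Example 20.26, Proposition 20.23]{bauschke11}. For $B = \mc S_2 + \mc S_3$, I would argue monotonicity and Lipschitz continuity exactly as in Lemma~\ref{le:Lips_B}, with $\mc S_3$ now playing the role of $\mc T_4$ (the coupling constraint being the only dualized one here, while the local constraints stay inside $\Omega_i$). Monotonicity of $\mc S_2 = \col(F(\bm x,\bm y),\rho)$ follows from Assumption~\ref{as:mon_F} since its $\lambda$-block is the constant $\rho$; monotonicity of $\mc S_3$ on its domain follows from the cancellation of the linear $H^{\rm d}$ cross-terms between the $\bm x$-block and the $\lambda$-block, together with the convexity of $h^{\rm c}$ in $\bm y$ for $\lambda \in \bb R^{n_\rho}_{\geq 0}$. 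Lipschitz continuity with constant $\ell_B = \max(\ell_F,\ell_{\mc S_3})$ follows from the block structure, using Assumption~\ref{as:mon_F} for $F$ (hence $\mc S_2$) and Assumption~\ref{as:L_S3} for $\mc S_3$. Finally, $\varphi^{\mathrm E}$ is Legendre with $\operatorname{int}\dom(\varphi^{\mathrm E}) = \bb R^{n_{\omega'}}$ and is $1$-strongly convex in the Euclidean norm (Example~\ref{ex:E_entropy}), so $\sigma_\varphi = 1$ and the step-size range $\gamma_i,\zeta \in (0,\tfrac{1}{2\max(\ell_F,\ell_{\mc S_3})})$ is precisely $(0,\tfrac{\sigma_\varphi}{2\ell_B})$. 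Since $\operatorname{int}\dom(\varphi^{\mathrm E})$ is the whole space, the stated initialization lies in it and the nonemptiness hypothesis reduces to $\zer(\mc S_1+\mc S_2+\mc S_3)\neq\varnothing$.

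Third, Theorem~\ref{th:conv_BreFoRB} then yields that $\bm\omega'^{(k)} = (\bm x^{(k)},\bm y^{(k)},\lambda^{(k)})$ converges to some $\bm\omega'^{\star} = (\bm x^{\star},\bm y^{\star},\lambda^{\star}) \in \zer(\mc S_1+\mc S_2+\mc S_3)$. To conclude, I would translate this zero back to the game: by construction of $\mc S_1,\mc S_2,\mc S_3$, the inclusion $\bm 0 \in (\mc S_1+\mc S_2+\mc S_3)(\bm\omega'^{\star})$ is exactly the KKT system \eqref{eq:kkt} with common multiplier $\lambda^{\star}$. Under Assumption~\ref{as:cont} (convexity of the continuous costs and constraints in $\bm y$, convexity of $\bbi J_i^{\rm d}$ in $x_i$, and Slater's CQ for $K$), these conditions are necessary and sufficient for $(\bm x^{\star},\bm y^{\star})$ to be a variational MS-GNE of \eqref{eq:gmi_game1} in the sense of Definition~\ref{def:gne}; hence $(\bm x^{(k)},\bm y^{(k)}) \to (\bm x^{\star},\bm y^{\star})$, a variational MS-GNE.

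I expect the main obstacle to be the second step: carefully establishing the maximal monotonicity and the precise Lipschitz constant $\max(\ell_F,\ell_{\mc S_3})$ of $B = \mc S_2 + \mc S_3$ on the relevant domain. Monotonicity of the dualized-constraint block $\mc S_3$ for genuinely convex (non-affine) $h^{\rm c}$ hinges on the restriction $\lambda \in \bb R^{n_\rho}_{\geq 0}$ (so that $\langle \lambda, h^{\rm c}(\bm y)\rangle$ is convex in $\bm y$) and on the cancellation of the linear $H^{\rm d}$ cross-terms; and obtaining the block-Lipschitz bound as a maximum rather than a sum requires the same refined estimate as in Lemma~\ref{le:Lips_B}, which I would reuse rather than repeat. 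The remaining arguments closely parallel the proof of Theorem~\ref{th:conv_alg_BFoRB}, the only structural differences being the simpler splitting (no $\bm\mu$-variable and no $\mc T_3$-analog) and the use of the fully Euclidean regularizer throughout.
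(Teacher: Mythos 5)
Your proposal is correct and takes essentially the same approach as the paper: the paper's proof of Corollary~\ref{th:conv_alg_BFoRB2} is precisely the one-line invocation of Theorem~\ref{th:conv_BreFoRB} with the splitting $A=\mc S_1$, $B=\mc S_2+\mc S_3$ and the Euclidean Legendre function $\varphi^{\mathrm{E}}$. The details you spell out --- the resolvent of $\mc S_1$ reducing to the projections $\proj_{\Omega_i}$ and $\proj_{\geq 0}$, the Lemma~\ref{le:Lips_B}-style monotonicity and Lipschitz verification for $\mc S_2+\mc S_3$ (including the role of $\lambda\in\bb R^{n_\rho}_{\geq 0}$ for monotonicity of $\mc S_3$), and the translation of a zero back to a variational MS-GNE via the KKT system \eqref{eq:kkt} --- are exactly the steps the paper delegates to its earlier developments (Lemma~\ref{le:Lips_B}, Section~\ref{sec:GNE_alt_alg}, and the proof of Theorem~\ref{th:conv_alg_BFoRB}), so your elaboration matches the intended argument.
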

		\begin{proof}
			It follows Theorem \ref{th:conv_BreFoRB} by considering the {Legendre} function $\varphi^{\mathrm{E}}(\bm \omega')$ and letting $A = \mc S_1$ and $B = \mc S_2 + \mc S_3$. 
		\end{proof}
		
		\subsection{Distributed algorithms {without a central coordinator}}
		\label{sec:fd_alg}
		Algorithm \ref{alg:BFoRB} and its alternative proposed in Section \ref{sec:GNE_alt_alg} need a central coordinator to update the dual variable $\lambda$. However, {we can} remove {the role of a} central coordinator {by assigning} a local copy of $\lambda$ to each agent  and {adding} a consensus process into the algorithm such that these copies of $\lambda$ converge to a {common} variational dual variable $\lambda^{\star}$ \cite{yi19,franci20}. 
		First, we setup a communication network over which the consensus process of $\lambda$ is carried out. Let this network be represented by an undirected graph $\mc G^\lambda :=(\mc I, \mc E, W )$, where $\mc E \subseteq \mc I \times \mc I$ denotes the set of edges that connect the agents. Furthermore, we denote by $W = [w_{i,j}] \in \bb R^{N \times N}$ the weighted adjacency matrix of $\mc G^{\lambda}$ with $w_{i,j} >0$ if $(i,j) \in \mc E$ and by $\mc N_i^\lambda := \{j \in \mc I \mid (i,j) \in \mc E \}$ the set of neighbors of agent $i$ in $\mc G^{\lambda}$. Let us consider the following common assumption on the graph connectivity \cite{yi19,franci20}.
		\begin{assumption}
			\label{as:conG}
			The graph $\mc G^{\lambda}$ is connected and $W$ is symmetric. \eod
		\end{assumption}
		We denote the weighted Laplacian matrix of $\mc G^\lambda$ by $\L := \diag(( \sum_{j=1}^N w_{i,j} )_{i=1}^N) - W$. Due to Assumption \ref{as:conG}, $\L$ is symmetric and positive semi-definite with real and distinct eigenvalues $0=s_1 < s_2 <\dots < s_N$ \cite[Section 2.2]{yi19}.

		Focusing on Algorithm \ref{alg:BFoRB}, { as the alternative algorithm can be treated similarly},  we introduce an auxiliary variable $\nu_i \in \bb R^{n_\rho}$, for each agent $i \in \mc I$, used for the consensus process. Then, by letting the dual variable $\lambda_i$ as a local variable of each agent and by setting $\bm \lambda = \col((\lambda_i)_{i\in\mc I})$ and $\bm \nu = \col((\nu_i)_{i\in\mc I})$, we can augment the KKT inclusion used to design Algorithm \ref{alg:BFoRB}, 
		i.e., \eqref{eq:mon_inc_MSGNEP} 
		based on \cite{yi19}. Specifically, we consider 
		\begin{equation}
			\bm 0 \in (\tilde{\mc T}_1 + \tilde{\mc T}_2 + \tilde{\mc T}_3 + \tilde{\mc T}_4 + \tilde{\mc T}_5)(\tilde{\bm \omega}),
			\label{eq:kkt_op2}
		\end{equation}
		where $\tilde{\bm \omega} = (\bm x, \bm y, \bm \mu, \bm \lambda, \bm \nu)$. The operators {$\tilde{\mc T}_1\colon \mc X \times \mc Y \times \bb R_{\geq 0}^{n_\theta +  Nn_\rho} \times \bb R^{N n_\rho}\to \bb R^{n_{\tilde \omega}}$, $\tilde{\mc T}_2 \colon \bb R^{n_{\tilde \omega}}\to \bb R^{n_{\tilde \omega}}$, $\tilde{\mc T}_3 \colon \bb R^{m+n} \times \bb R_{\geq 0}^{n_\theta }  \times \bb R^{2 N n_\rho} \to \bb R^{n_{\tilde \omega}}$, and $\mc T_4 \colon \bb R^{m+n} \times \bb R^{n_\theta} \times  \bb R_{\geq 0}^{Nn_\rho}\times  \bb R^{Nn_\rho} \to \bb R^{n_{\tilde \omega}}$, with $n_{\tilde \omega} = m+n+n_\theta+2Nn_\rho$, are defined by}
		\begin{align*}
			\tilde {\mc T}_1(\tilde{\bm \omega}) &= \nc_{\mc X}(\bm x) \hspace{-1pt}\times\hspace{-1pt} \nc_{\mc Y}(\bm y) \hspace{-1pt}\times\hspace{-1pt} \nc_{\bb R^{n_\theta}_{\geq 0}}(\bm \mu) \hspace{-1pt}\times\hspace{-1pt} \nc_{\bb R^{Nn_\rho}_{\geq 0}}(\bm \lambda) \hspace{-1pt}\times\hspace{-1pt}  \bm 0_{Nn_\rho}\hspace{-1pt}, \\
			\tilde {\mc T}_2(\tilde{\bm \omega}) &= \col(F^{\rm d}(\bm x),F^{\rm c}(\bm y), \bm \theta, \1_N \otimes \rho/N  , \bm 0_{Nn_\rho}), \\
			\tilde {\mc T}_3(\tilde{\bm \omega}) &= \col(G^{\rm d\top}\bm \mu,\nabla g^{\rm c}(\bm y)^{\top}\bm \mu, -G^{\rm d}\bm x- g^{\rm c}(\bm y),  \bm 0_{2Nn_\rho}), \\
			\tilde {\mc T}_4(\tilde{\bm \omega}) &= \col(\tilde{H}^{\rm d\top}\bm \lambda, \hspace{-2pt} \tilde{\nabla h}^{\rm c}(\bm y)^{\top}\bm \lambda,\hspace{-1pt}\bm 0_{n_\theta} , \hspace{-1pt}  -\tilde{H}^{\rm d}\bm x \hspace{-2pt}-\hspace{-2pt} \tilde{h}^{\rm c}(\bm y),   \bm 0_{Nn_\rho}), \\
			\tilde {\mc T}_5(\tilde{\bm \omega}) &= \col(\bm 0_{m+n+n_{\theta}}, -(\L \otimes I_{n_\rho})\bm \nu, (\L \otimes I_{n_\rho})\bm \lambda  ), 
		\end{align*}
		where $\tilde{H}^{d} \hspace{-1pt} =\hspace{-1pt}  \diag(\hspace{-1pt} (H_i^{d})_{i\in\mc I}\hspace{-1pt} )$, $\tilde{h}(\bm y) \hspace{-1pt} =\hspace{-1pt}  \col(\hspace{-1pt} (h_i(y_i)\hspace{-1pt} )_{i\in\mc I}\hspace{-1pt} )$, and $\tilde{\nabla h}^{\rm c}(\bm y) \hspace{-1pt} =\hspace{-1pt}  \diag(\hspace{-1pt} (\nabla_{y_i}h_i^{\rm c}(y_i)\hspace{-1pt} )_{i\in\mc I}\hspace{-1pt} )$. While $\tilde{\mc T}_1$, $\tilde{\mc T}_2$, $\tilde{\mc T}_3$, and $\tilde{\mc T}_4$ are the augmentations of the operators in \eqref{eq:kkt_op1} due to that of $\bm \lambda$ and the addition of $\bm \nu$, $\tilde{\mc T}_5$ defines the consensus of $\bm \lambda$ and $\bm \nu$. Similarly to \cite[Theorem 2]{yi19}, given that $\tilde{\bm \omega}=\tilde{\bm \omega}^{\star}$ such that \eqref{eq:kkt_op2} holds, then $(\bm x^{\star},\bm y^{\star})$ is a variational MS-GNE. We omit the proof as it follows that of \cite[Theorem 2]{yi19}.

		The operators $\tilde{\mc T}_j$, $j=1,\dots,4$  
		are maximally monotone, which can be shown by following the first part of the proof of Lemma \ref{le:Lips_B} (c.f. Appendix \ref{ap:pf:le:Lips_B}). Moreover, $ \tilde{\mc T}_2$ 
		is $\ell_F$-Lipschitz continuous, similarly to $\mc T_2$. 
		Since $\tilde{\mc T}_3$ is $\col(\mc T_3, \bm 0)$, it is $\ell_{\mc T_3}$-Lipschitz continuous when Assumption \ref{as:L_T34} holds. Furthermore, we consider Assumption \ref{as:t4_s3} for $\tilde{\mc T}_4$ 
		and {assert} the maximal monotonicity and Lipschitz continuity of $\tilde{\mc T}_5$. 
		\begin{assumption}
			\label{as:t4_s3}
			The operator  $\tilde{\mc T}_4$ in \eqref{eq:kkt_op2} 
			is {$\ell_{\tilde{\mc T}_4}$-Lipschitz continuous.}
			\eod 
		\end{assumption}

		\begin{lemma}
			\label{le:cons_op}
			Let Assumption \ref{as:conG} hold. The operators $\tilde{\mc T}_5$ in \eqref{eq:kkt_op2} 
			is maximally monotone and {$\ell_{\tilde{\mc T}_5}$}\hspace{-1pt}-Lipschitz continuous, where ${\ell_{\tilde{\mc T}_5} }= 2 \max_{i \in \mc I} \sum_{j=1}^N w_{i,j}$. \eod
		\end{lemma}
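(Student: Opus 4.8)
The plan is to exploit the fact that $\tilde{\mc T}_5$ is a \emph{bounded linear} operator on $\bb R^{n_{\tilde\omega}}$ whose only nonzero action is a skew coupling between the $\bm\lambda$- and $\bm\nu$-blocks, all other blocks being mapped to or from zero. First I would write $\tilde{\mc T}_5(\tilde{\bm\omega}) = \mc M\,\tilde{\bm\omega}$, where $\mc M$ vanishes on the $(\bm x,\bm y,\bm\mu)$-rows and columns and, restricted to the $(\bm\lambda,\bm\nu)$-block, equals
\[
\begin{bmatrix} 0 & -(\L \otimes I_{n_\rho}) \\ (\L \otimes I_{n_\rho}) & 0 \end{bmatrix}.
\]
Since $W$ is symmetric by Assumption \ref{as:conG}, the Laplacian $\L$ is symmetric, hence so is $\L \otimes I_{n_\rho}$, and therefore $\mc M^{\top} = -\mc M$, i.e. $\mc M$ is skew-symmetric.

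For monotonicity, I would set $d = \tilde{\bm\omega} - \tilde{\bm\omega}'$ and use linearity to get $\langle \tilde{\mc T}_5(\tilde{\bm\omega}) - \tilde{\mc T}_5(\tilde{\bm\omega}'),\, d\rangle = d^{\top}\mc M d = 0$, the skew-symmetry forcing the quadratic form to vanish; concretely, the two cross terms produced by the $\bm\lambda$- and $\bm\nu$-blocks cancel because $\L\otimes I_{n_\rho}$ is self-adjoint. Thus $\tilde{\mc T}_5$ is monotone. Being in addition single-valued, continuous, and defined on all of $\bb R^{n_{\tilde\omega}}$, it is maximally monotone by \cite[Cor. 20.28]{bauschke11}.

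For Lipschitz continuity, linearity reduces the task to bounding the spectral norm $\|\mc M\|$. I would compute
\[
\|\tilde{\mc T}_5(\tilde{\bm\omega}) - \tilde{\mc T}_5(\tilde{\bm\omega}')\|^2 = \|(\L\otimes I_{n_\rho})(\bm\lambda-\bm\lambda')\|^2 + \|(\L\otimes I_{n_\rho})(\bm\nu-\bm\nu')\|^2 \le \|\L\otimes I_{n_\rho}\|^2\,\|d\|^2,
\]
so the relevant constant is $\|\L\otimes I_{n_\rho}\| = \|\L\| = s_N$, using that the spectrum of $\L\otimes I_{n_\rho}$ is that of $\L$ (repeated $n_\rho$ times) and that $\L$ is symmetric positive semidefinite. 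The final step is to upper-bound $s_N$ by the stated $\ell^{\rm cns}$: writing $\L = \diag((\sum_{j=1}^N w_{i,j})_{i})- W$, each row of $\L$ sums to zero with nonpositive off-diagonal entries, so the $i$-th Gershgorin disc is centered at $\L_{ii}$ with radius $\sum_{j\neq i}|\L_{ij}| = \L_{ii} \le \sum_{j=1}^N w_{i,j}$, giving $s_N \le 2\max_{i\in\mc I}\sum_{j=1}^N w_{i,j} = \ell^{\rm cns}$. Any upper bound on the spectral norm is a valid Lipschitz constant, which closes the argument.

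The computations here are essentially routine; the only points that deserve care are verifying that skew-symmetry genuinely relies on the symmetry of $W$ (Assumption \ref{as:conG}) together with that of $I_{n_\rho}$, and checking that the Gershgorin estimate yields precisely the explicit constant $2\max_{i}\sum_{j}w_{i,j}$ rather than the sharper but implicit value $s_N$.
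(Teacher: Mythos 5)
Your proof is correct and follows essentially the same route as the paper: maximal monotonicity from the skew-symmetry of the linear operator (the paper invokes \cite[Example 20.35]{bauschke11} directly, while you verify skew-symmetry and then apply the continuity-plus-full-domain corollary), and Lipschitz continuity from the spectral norm bound $\|\L\|\leq 2\max_{i\in\mc I}\sum_{j=1}^N w_{i,j}$. The only difference is that you derive this bound yourself via Gershgorin discs, whereas the paper simply cites \cite[Section 2.2]{yi19}; your derivation is a valid, self-contained replacement.
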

		\begin{proof}
			See Appendix \ref{ap:pf:le:cons_op}.
		\end{proof}

		Next, we define the {regularizer function} that we incorporate into the inclusions \eqref{eq:kkt_op2}. 
		Indeed we use a {Legendre} function {similarly to that} for Algorithm \ref{alg:BFoRB}, i.e., 
		\begin{align}
			\tilde{\varphi}(\tilde{\bm \omega}) &= \sum_{i\in\mc I} \varphi_i^{\mathrm{NE}}(x_i) + \varphi_i^{\mathrm{E}}(y_i,\mu_{i},\lambda_i,\nu_i). \label{eq:Breg_f_gmi3}
		\end{align}  
		{Now,} we are ready to present a variant of Algorithm \ref{alg:BFoRB}  {without a central  coordinator}, as stated in Algorithm \ref{alg:BFoRB_dist}, with step sizes that can be set based on Assumption \ref{as:step_size}.

		\begin{algorithm}[!t]
			\caption{Distributed B-FoRB GNE-seeking algorithm}
			\label{alg:BFoRB_dist}
			\textbf{Initialization}\\
			For $i \in \mc I$, set $x_{i}^{(0)} = x_{i}^{(-1)} \in {\operatorname{rint}( \Delta(\mc A_i))}$, $y_{i}^{(0)} = y_{i}^{(-1)} \in {\bb R^{n_i}}$, $\mu_{i}^{(0)} = \mu_{i}^{(-1)} \in \bb R^{n_{\theta_i}}$,  $\lambda_i^{(0)} = \lambda_i^{(-1)} \in \bb R^{n_\rho}$, and $\nu_i^{(0)} = \nu_i^{(-1)} \in \bb R^{n_\rho}$.
			
			\textbf{Iteration until convergence}\\
			\textbf{Each agent $i \in \mc I$ :}
			\begin{enumerate}
				\item Computes $\nabla_{x_i} \bbi J_i^{\rm d} (\bm x^{(k)})$ and $\nabla_{y_i} J_i^{\rm c} (\bm y^{(k)})$.
				\item Updates $x_{i}^{(k+1)}$ and $y_{i}^{j(k+1)}$ by \eqref{eq:x_upd_alg_BFoRB} and \eqref{eq:y_upd_alg_BFoRB}, with $\lambda^{(k)} = \lambda_i^{(k)}$.

				\item Updates $\mu_{i}^{(k+1)}$ by \eqref{eq:mu_upd_alg_BFoRB} and 
				$\lambda_i^{(k+1)}$ by
				\begin{align}
					&\hspace{-5pt} \accentset{\circ}{\lambda}_i^{(k)}  =  \rho/N - \left(H_i^{\rm d} x_{i}^{(k)} + h_i^{\rm c}(y_{i}^{(k)})\right) 
					- \sum_{j \in \mc N_i^\lambda} w_{i,j} (\nu_i^{(k)} - \nu_j^{(k)}), \notag \\
					&\hspace{-15pt}\lambda_i^{(k+1)} = \proj_{{\geq 0}}(\lambda_i^{(k)}-\gamma_i (2\accentset{\circ}{\lambda_i}^{(k)}-\accentset{\circ}{\lambda_i}^{(k-1)})).
					\label{eq:lambda_upd_alg_BFoRB_d}
				\end{align}
				\item Updates $\nu_i^{(k+1)}$ by
				\begin{align}
					&\hspace{-5pt}	\accentset{\circ}{\nu}_i^{(k)}  = \sum_{j \in \mc N_i^\lambda} w_{i,j} (\lambda_i^{(k)} - \lambda_j^{(k)}), \notag \\
					&\hspace{-15pt}\nu_i^{(k+1)} =\nu_i^{(k)}-\gamma_i (2\accentset{\circ}{\nu_i}^{(k)}-\accentset{\circ}{\nu_i}^{(k-1)})).
					\label{eq:nu_upd_alg_BFoRB_d}
				\end{align}
				\item Sends $(x_{i}^{(k+1)},y_{i}^{(k+1)})$ to other agents that require them and $(\lambda_i^{(k+1)},\nu_i^{(k+1)})$ to the neighbors $j \in \mc N_i^\lambda$.
			\end{enumerate}
			\textbf{end}
		\end{algorithm}
			\begin{assumption}
				\label{as:step_size}
				The step size of each agent $i \in \mc I$, $\gamma_i$, in Algorithm \ref{alg:BFoRB_dist} satisfies  $\gamma_i \in (0,\frac{1}{2 \max(\ell_F,\ell_{\mc T_3},\ell_{\tilde{\mc T}_4},\ell_{\tilde{\mc T}_5})})$. 
				\eod 
			\end{assumption}
			\begin{theorem}
				\label{th:conv_alg_BFoRB_d}
				Let Assumptions \ref{as:cont}, \ref{as:mon_F}, {\ref{as:L_T34}}, \ref{as:conG}, \ref{as:t4_s3}, and \ref{as:step_size} hold. Let the sequence $\tilde{\bm \omega}^{(k)} = (\bm x^{(k)}, \bm y^{(k)}, \bm \mu^{(k)}, \bm \lambda^{(k)}, \bm \nu^{(k)})$ be generated by Algorithm \ref{alg:BFoRB_dist}. Moreover, suppose that  $\zer(\tilde{\mc T}_1 + \tilde{\mc T}_2 + \tilde{\mc T}_3 + \tilde{\mc T}_4 + \tilde{\mc T}_5) \cap (\operatorname{int}\dom (\tilde \varphi)) \neq  \varnothing$, where $\tilde{\mc T}_j$, $j=1,\dots,5$, are defined in \eqref{eq:kkt_op2} and $\tilde \varphi(\tilde{\bm \omega})$ is defined in \eqref{eq:Breg_f_gmi3}. Then the sequence $(\bm x^{(k)},\bm y^{(k)})$  converges to a variational MS-GNE of the game \eqref{eq:gmi_game1}. \eod
			\end{theorem}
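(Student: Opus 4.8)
The plan is to cast Algorithm \ref{alg:BFoRB_dist} as an instance of the B-FoRB iteration \eqref{eq:BreFoRB} applied to the augmented inclusion \eqref{eq:kkt_op2}, and then invoke Theorem \ref{th:conv_BreFoRB}. I split the operators as $A = \tilde{\mc T}_1$ and $B = \tilde{\mc T}_2 + \tilde{\mc T}_3 + \tilde{\mc T}_4 + \tilde{\mc T}_5$. The operator $A$ is a concatenation of normal cones of closed convex sets (with a zero block in the $\bm \nu$ coordinate, i.e. $\nc_{\bb R^{Nn_\rho}} = \{\bm 0\}$), hence maximally monotone by \cite[Example 20.26, Proposition 20.23]{bauschke11}. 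Each summand of $B$ is single-valued with full domain $\bb R^{n_{\tilde \omega}}$: the operators $\tilde{\mc T}_2, \tilde{\mc T}_3, \tilde{\mc T}_4$ are maximally monotone by the argument used in the first part of the proof of Lemma \ref{le:Lips_B}, and $\tilde{\mc T}_5$ is maximally monotone by Lemma \ref{le:cons_op}. Since all summands have full domain, $B$ is maximally monotone, and therefore $A+B$ is maximally monotone by \cite[Corollary 25.5]{bauschke11}.

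Next I collect the Lipschitz constants. As recorded before the theorem statement, $\tilde{\mc T}_2$ is $\ell_F$-Lipschitz, $\tilde{\mc T}_3$ is $\ell_{\mc T_3}$-Lipschitz under Assumption \ref{as:L_T34}, $\tilde{\mc T}_4$ is $\ell_{\tilde{\mc T}_4}$-Lipschitz under Assumption \ref{as:t4_s3}, and $\tilde{\mc T}_5$ is $\ell^{\rm cns}$-Lipschitz by Lemma \ref{le:cons_op}. Exploiting the skew-coupled block structure exactly as in the proof of Lemma \ref{le:Lips_B}, the sum $B$ is $\ell_B$-Lipschitz with $\ell_B = \max(\ell_F, \ell_{\mc T_3}, \ell_{\tilde{\mc T}_4}, \ell^{\rm cns})$. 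Moreover, the regularizer $\tilde\varphi$ in \eqref{eq:Breg_f_gmi3} is a Legendre function that is $1$-strongly convex on $\operatorname{rint}(\mc X) \times \bb R^{n+n_\theta+2Nn_\rho}$ by the same reasoning as Lemma \ref{le:str_cvx_Bregf}, so $\sigma_{\tilde\varphi} = 1$.

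The remaining step is to verify that the per-block updates of Algorithm \ref{alg:BFoRB_dist} realize the backward operator $(\nabla\tilde\varphi + \Gamma A)^{-1}$ acting on the reflected forward term $\nabla\tilde\varphi(\tilde{\bm \omega}^{(k)}) - \Gamma(2B(\tilde{\bm \omega}^{(k)}) - B(\tilde{\bm \omega}^{(k-1)}))$. Because $\tilde\varphi$ and $A$ are separable across the $(\bm x, \bm y, \bm \mu, \bm \lambda, \bm \nu)$ blocks, the resolvent decomposes blockwise: the Gibbs-Shannon entropy on each $x_i$ yields the closed-form softmax mirror map \eqref{eq:x_upd_alg_BFoRB}, while the Euclidean regularizer on $(y_i, \mu_i, \lambda_i, \nu_i)$ yields the projections \eqref{eq:y_upd_alg_BFoRB}, \eqref{eq:mu_upd_alg_BFoRB}, \eqref{eq:lambda_upd_alg_BFoRB_d} and the unconstrained update \eqref{eq:nu_upd_alg_BFoRB_d} (the latter consistent with the zero block of $A$). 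Checking that the auxiliary variables $\accentset{\circ}{x}_i, \accentset{\circ}{y}_i, \accentset{\circ}{\mu}_i, \accentset{\circ}{\lambda}_i, \accentset{\circ}{\nu}_i$ are exactly the corresponding components of $B(\tilde{\bm \omega}^{(k)})$ — in particular that the Laplacian terms in \eqref{eq:lambda_upd_alg_BFoRB_d}-\eqref{eq:nu_upd_alg_BFoRB_d} reproduce $\tilde{\mc T}_5$ — confirms the identification. With Assumption \ref{as:step_size} giving $\gamma_i \in (0, \tfrac{1}{2\ell_B}) = (0, \tfrac{\sigma_{\tilde\varphi}}{2\ell_B})$ and the initialization in $\operatorname{int}\dom(\tilde\varphi)$, Theorem \ref{th:conv_BreFoRB} applies and yields convergence of $\tilde{\bm \omega}^{(k)}$ to some $\tilde{\bm \omega}^\star \in \zer(A+B) \cap \operatorname{int}\dom(\tilde\varphi)$. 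Finally, as argued following \eqref{eq:kkt_op2} and paralleling \cite[Theorem 2]{yi19}, the consensus block $\tilde{\mc T}_5$ forces the local copies $\lambda_i^\star$ to agree at a common dual variable, so $\tilde{\bm \omega}^\star$ recovers the KKT system \eqref{eq:kkt1}; hence $(\bm x^\star, \bm y^\star)$ is a variational MS-GNE of \eqref{eq:gmi_game1}.

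The main obstacle I anticipate is the Lipschitz bookkeeping for $B$: showing that the consensus operator $\tilde{\mc T}_5$ integrates into the skew-coupled structure so that the combined constant is the maximum $\max(\ell_F, \ell_{\mc T_3}, \ell_{\tilde{\mc T}_4}, \ell^{\rm cns})$ rather than a sum. This is what makes the step-size bound in Assumption \ref{as:step_size} align with the hypothesis of Theorem \ref{th:conv_BreFoRB}, and it requires the block-diagonal/skew-symmetric decomposition of Lemma \ref{le:Lips_B} rather than a naive triangle inequality.
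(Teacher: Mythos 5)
Your proposal is correct and follows essentially the same route as the paper: split $A = \tilde{\mc T}_1$, $B = \tilde{\mc T}_2 + \tilde{\mc T}_3 + \tilde{\mc T}_4 + \tilde{\mc T}_5$, verify that the blockwise updates of Algorithm \ref{alg:BFoRB_dist} realize the B-FoRB iteration \eqref{eq:BreFoRB} with the regularizer $\tilde\varphi$ (the analogue of Lemma \ref{pr:BFoRB}), check the hypotheses of Theorem \ref{th:conv_BreFoRB} via the monotonicity and Lipschitz properties recorded before the theorem and in Lemma \ref{le:cons_op}, and conclude through the KKT characterization as in Theorem \ref{th:conv_alg_BFoRB}. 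Your closing remark about the Lipschitz constant of $B$ being the maximum rather than the sum is a fair caution, but it is exactly the bookkeeping the paper itself relies on in Lemma \ref{le:Lips_B}, so your argument is aligned with the paper's.
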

			\begin{proof}
				Let us define $A = \tilde{\mc T}_1$ and $B=   \tilde{\mc T}_2 +  \tilde{\mc T}_3 +  \tilde{\mc T}_4 +  \tilde{\mc T}_5$. {Thus, Algorithm \ref{alg:BFoRB_dist} is} the expanded form of the B-FoRB algorithm \eqref{eq:BreFoRB} with $A$ and $B$ as previously defined and the Legendre function $\tilde \varphi$ defined in \eqref{eq:Breg_f_gmi3}, which satisfies Assumption \ref{as:phi} and is 1-strongly convex on $\dom(\tilde{\mc T}_1)\cap \operatorname{int}\dom(\varphi)$. The proof of this statement follows that of Lemma \ref{pr:BFoRB} in Appendix \ref{ap:pf:th:conv_alg_BFoRB}. Then, the proof {is analogous} to that of Theorem \ref{th:conv_alg_BFoRB}.
			\end{proof}
			\color{black}

			\section{Illustrative examples}
			\label{sec:apps}
			In this section, we present several problems that can be modeled as {MS-GNEPs of GMI games}. 
				Furthermore, we show the efficacy of the  algorithms proposed in Section \ref{sec:GNE_alg}  through numerical simulations. {These simulations are carried out in \textsc{Matlab} 2020b on a laptop computer with 2.3 GHz intel core i5 and 8 GB of RAM. Moreover, whenever a quadratic programming must be solved to perform a projection operation, we use the  \texttt{OSQP} solver \cite{osqp}, where we set the absolute and relative tolerances to  $10^{-10}$.} 
				
				\subsection{Demand-side management with on/off devices}
				\label{sec:DSM}
				
				{The first GMI game we discuss is a demand-side management problem in the energy domain.}  Suppose that each consumer  in an electrical distribution grid, $i \in \mc I$, owns a set of flexible devices/loads  denoted by $\mc D_i$, i.e., devices that consume energy and can be automatically operated by a local controller. Specifically, we {assume} that these devices are interruptable ones \cite{kim13}. {Thus,} each device $j \in \mc D_i$ consumes at least a certain amount of energy over a certain period of discrete time, i.e.,
				\begin{align}
					\sum_{t = 1}^T y^{j}_{i,t} &\geq E^j_i, \forall j \in \mc D_i, \quad \forall i \in  \mc I,
					\label{eq:DSM_loc_set_y}
				\end{align}
				where $T$ denotes the time period, $y^{j}_{i,t} \in \bb R$ denotes the consumption of device $j$ at time step $t$, and $E^j_i \in \bb R$ denotes the minimum energy consumed by device $j$. We suppose that the operation of each device is done by switching it on or off at each $t$ and adjusting the energy usage. {Therefore, we introduce} a binary decision, $a^{j}_{i,t} \in \{0,1\}$ that represents the state of the device (off $(0)$/on $(1)$) and the following constraint:
				\begin{align}
					\underline y_i^j a^{j}_{i,t} \leq y^{j}_{i,t} \leq  \overline y_i^j a^{j}_{i,t}, 
					\label{eq:DSM_loc_cons}
				\end{align}
				for $t=1,\dots, T$, each $j \in \mc D_i$, and $i \in \mc I$, where  $0 < \underline y_i^j \leq \overline y_i^j$ denote the lower and upper bounds of energy consumed by device $j$ when it is on. 
				Furthermore, for each consumer, we denote $ a_{i,t} = \col((a_{i,t}^j)_{j \in \mc D_i})$ and $ y_{i,t} = \col((y_{i,t}^j)_{j \in \mc D_i})$. 
				In addition to flexible devices, we also suppose that each consumer also owns inflexible ones, whose aggregated load profile during the period $T$ is assumed to be known and denoted by $P_i = \col((P_{i,t})_{t=1}^T) \in \bb R^T$. 
				In this demand-side management problem, each consumer aims to minimize its electricity cost,  defined by
				\begin{equation}
					J_i^{\rm c} = \sum_{t=1}^{T} q_t(\bm y_t) \1^{\top}  y_{i,t},
					\label{eq:DSM_cost}
				\end{equation}
				where $\bm y_t = \col((y_{i,t})_{i \in \mc I})$, for $t=1,\dots, T$ and $q_t(\cdot)$ is the electricity {price} that varies based on the total consumption of the whole network \cite{atzeni13}, i.e., for $t=1,\dots, T$, 
				$$q_t(\bm y_t) = r_t\sum_{i \in \mc I} \left(P_{i,t} + \1^{\top}  y_{i,t}\right), $$
				where $r_t >0$, for $t=1,\dots, T$. 
				Additionally, in order to avoid congestion, we limit the total consumption of the distribution grid, i.e.,
				\begin{equation}
					\underline P^{\rm grid} \leq \sum_{i\in \mc I}\left(P_{i,t} + \1^{\top}  y_{i,t}\right) \leq \overline P^{\rm grid},
					\label{eq:DSM_coup_cons}
				\end{equation}
				for $t=1,\dots, T$, where $\underline P^{\rm grid}$ and $\overline P^{\rm grid}$ denote the lower and upper bound of distribution grid total consumption. 
				
				Therefore, we obtain a GMI game in the form of Problem \eqref{eq:gmi_game} where $a_i = \col((a_{i,t})_{t=1}^T)$, $y_i = \col((y_{i,t})_{t=1}^T)$, $\mc A_i = \prod_{t=1}^T \prod_{j\in \mc D_i} \{0,1\}$, $\mc Y_i$ is defined by \eqref{eq:DSM_loc_set_y}, the cost function \eqref{eq:cost_f} is defined by $J_i^{\rm c}$ in \eqref{eq:DSM_cost} and $J_i^{\rm d} = 0$. {Moreover, the constraints \eqref{eq:loc_cons} and \eqref{eq:coup_cons} are defined by  \eqref{eq:DSM_loc_cons} and \eqref{eq:DSM_coup_cons}, respectively.} 
				{Now, we formulate the corresponding mixed-strategy GMI game. By assuming that each device works independently, we can consider that each $a_{i,t}^j$ is randomly drawn with mixed-strategy  $x_{i,t}^j \in \Delta(\{0,1\})$. 
					In this regard, we can relax the constraint \eqref{eq:DSM_loc_cons} as follows:
					\begin{align}
						\bb E (\underline y_i^j a^{j}_{i,t}) \leq y^{j}_{i,t} \leq  \bb E(\overline y_i^j a^{j}_{i,t})
						\Leftrightarrow \ 
						\underline y_i^j \begin{bmatrix}
							0 & 1
						\end{bmatrix} x^j_{i,t}  \leq y^{j}_{i,t} \leq \overline y_i^j \begin{bmatrix}
							0 & 1
						\end{bmatrix} x^j_{i,t},
						\label{eq:DSM_loc_cons1}
					\end{align}
					for all $j \in \mc D_i$ and $t\in \{1,\dots, T\}$, where $x^j_{i,t}=(x^j_{i,t}(a_{i,t}^j=0),x^j_{i,t}(a_{i,t}^j=1))  \in \bb R^2$ denotes the probability distribution of the state of device $j \in \mc D_i$. A practical interpretation of $x^j_{i,t}$ is the durational proportion of the state of device $j$ at each discrete time $t$. Then, by defining $x_i = \col((\col((x_{i,t}^j)_{t=1}^T))_{j\in \mc D_i})$, we can have a mixed strategy GMI game defined by:
					\begin{equation*}	
						\begin{rcases}
							\underset{(x_i, y_i) }{\min} &\hspace{-8pt} J_i^{\rm c}(y_i,\bm y_{-i})   \\
							\operatorname{ s.t.  } &\hspace{-8pt}  (x_i, y_i)  \in \mc X_i \times \mc Y_i, \\
							&\hspace{-8pt}  \text{\eqref{eq:DSM_coup_cons} and \eqref{eq:DSM_loc_cons1},} \ \forall j \in \mc D_i, t =1,\dots, T,
						\end{rcases}
						\forall i \in \mc I,
					\end{equation*}
					where $\mc X_i = \prod_{j\in \mc D_i} \prod_{t=1}^T \Delta(\{0,1\})$.} 
				Note that the pseudogradient of this game is monotone and Lipschitz continuous.

				As an example, we simulate a small-scale network with 10 agents, each of which owns 1 flexible device. We consider a one-day horizon with hourly sampling time, i.e., $T=24$. Furthermore, we arbitrarily assign a single or multiple household load profile \cite{jasm} to each agent as its inflexible load. The parameters of the problem are defined as follows. With uniform distribution, we sample $E_i^j \sim [160, 1000]$ W, $\underline y_i^j \sim [1, 18]$ W, $\overline y_i^j \sim [30, 180]$ W, for all $i \in \mc I$. Moreover, $ \underline P^{\rm grid}=0$, $\overline P^{\rm grid} =24$ kW and $r_t = 0.1,$ for all $t=1,\dots,T$. 
				We compute a variational MS-GNE of this problem using Algorithm \ref{alg:BFoRB}. 
				The top plot of Figure \ref{fig:dsm} shows the probability that each agent turning on its flexible device at each time step and the bottom plot of Figure \ref{fig:dsm} shows the electricity price $q_t(\bm y_t)$ at each time step. Since the network consists of households, the peak hours occur at $t={7-9}$, $t={13-15}$, and $t={18-22}$.  From the top plot of Figure  \ref{fig:dsm}, indeed we can see that each agent decides to turn off its flexible device during these peak hours, when the electricity price is high.
				\begin{figure}
					\centering
					\hspace{-15pt}\includegraphics[height=0.3\textheight]{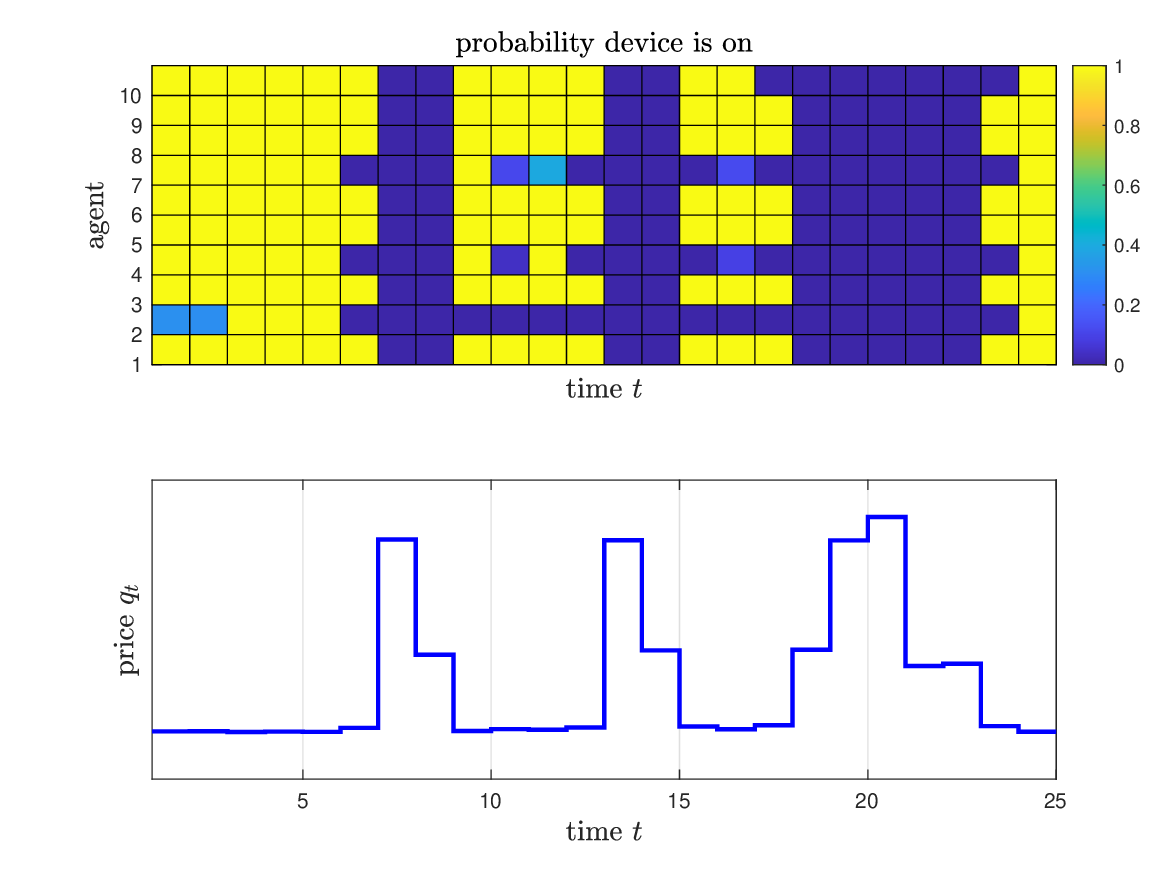}
					\caption{Simulation results of a 10-agent demand-side management. The top plot shows the probability that the device of each agent is on and the bottom plot shows the electricity price profile.}
					\label{fig:dsm}
				\end{figure}
				
				\color{black}

				\subsection{Networked Cournot games}
				\label{sec:Cour}
				{We consider that} a group of agents (firms) $ \mc I$, which produce a type of homogeneous and {divisible} good,  competes in $M$ markets based on the {Nash-}Cournot model \cite{yi19,bimpikis19,koshal16}.  In this market, a bipartite graph that determines to which subset of markets each agent can supply is given. {That is,} the links, which connect an agent and a market, are predetermined. Then, each agent decides how to allocate its product to the connected markets. This networked Cournot game can model the markets of electricity, gas, communication networks, airlines, and cement industry \cite{bimpikis19,koshal16}. 
				Here, we consider an extension where the market graph, which shows to which subset of markets each agent can supply, is not predetermined. Instead, each agent decides in which markets that it wants to participate and allocates its output at the same time. 
				
				Firstly, we {denote} the decision variables of market participation of agent $i$ {by} $z_i^\nu \in \{0,1\}$, for all $\nu \in \{1,2,\dots,M\}$, where $z_i^\nu = 1$ means that agent $i$ participates in market $\nu$. Moreover, we denote the allocated output of agent $i$ for market $\nu$ by $y_i^\nu \in \bb R$. Furthermore, we assume that if agent $i$ decides to participate in market $\nu$, it must deliver at least a certain amount of output, denoted by $\underline y^\nu$. We can consider it as a minimum bid or commitment of agent $i$ to market $\nu$. Moreover, each agent also has a production limit, denoted by $\overline y_i$. Therefore, we can translate these requirements as the following constraints:
				\begin{align}
					z_i^\nu \underline y^\nu \leq y_i^\nu &\leq z_i^\nu \overline y_i,  \label{eq:Cour_loc_cons}\\
					0 \leq \sum_{\nu=1}^M y_i^\nu &\leq \overline y_i. \label{eq:Cour_loc_set_y}
				\end{align}
				Furthermore, we may also consider $z_i^\nu$ as an additional limited resource, i.e., agent $i$ can only afford to connect to at most $\overline \nu_i$ markets, where $\overline{\nu}_i \leq M$, implying the following additional constraint:
				\begin{align}
					\sum_{\nu = 1}^M z_i^\nu \leq \overline \nu_i.  \label{eq:Cour_loc_set_a}
				\end{align}
				Secondly, each market $\nu$ has a maximum capacity, denoted by $\overline Y^\nu$. Therefore, we also have the following coupling constraints:
				\begin{align}
					\sum_{i \in \mc I} y_i^\nu \leq \overline Y^\nu, \quad \forall \nu=\{1,\dots,M\}.
					\label{eq:Cour_coup_cons}
				\end{align}
				Finally, the objective function of each agent depends on the aggregative term $\sum_{i\in \mc I} y_i$, where $y_i = \col((y_i^\nu)_{\nu=1}^M)$. Specifically, we follow \cite[{Eq. (36)}]{yi19}   that 
				\begin{equation}
					J_i^{\rm c}(\bm y) = c_i(y_i) + y_i^{\top} D y_i - \bar P^{\top} y_i +  \sum_{j \in \mc I \backslash \{i\}} y_j^{\top} D  y_i,
					\label{eq:Cour_cost}
				\end{equation}
				for all $i \in \mc I$, where $c_i(\cdot)$ is a strongly convex production cost function whereas $D = \diag((d_\nu)_{\nu=1}^M)$, $d_\nu > 0$, for $\nu=1,2,\dots,M$, and $\bar P \in \bb R^M_{>0}$ define the price function. {On the other hand,} we do not penalize the formation of connection between the agents and the markets, i.e., $J_i^{\rm d}=0$.  
				Thus, we obtain Problem \eqref{eq:gmi_game} where $a_i = \col((z_{i}^\nu)_{\nu=1}^M)$, $\mc A_i$ defined by \eqref{eq:Cour_loc_set_a}, i.e., $\mc A_i=\{a \in \{0,1\}^M \mid \1^{\top}a \leq \overline \nu_i\}$, $\mc Y_i$ defined by \eqref{eq:Cour_loc_set_y}, the cost function \eqref{eq:cost_f} is defined by $J_i^{\rm c}$ in \eqref{eq:Cour_cost} and $J_i^{\rm d} = 0$, local constraint \eqref{eq:loc_cons} defined by \eqref{eq:Cour_loc_cons}, and coupling constraints \eqref{eq:coup_cons} defined by \eqref{eq:Cour_coup_cons}. We can then obtain its mixed-strategy extension in the form \eqref{eq:gmi_game1}. 
				The pseudogradients $F^{\rm c}$ and $F^{\rm d}$ are monotone and Lipschitz continuous. In fact, $F^{\rm c}$ is strongly monotone \cite[{Section 7.1}]{yi19}.
				
				\begin{figure}
					\centering
					\includegraphics[scale=0.55]{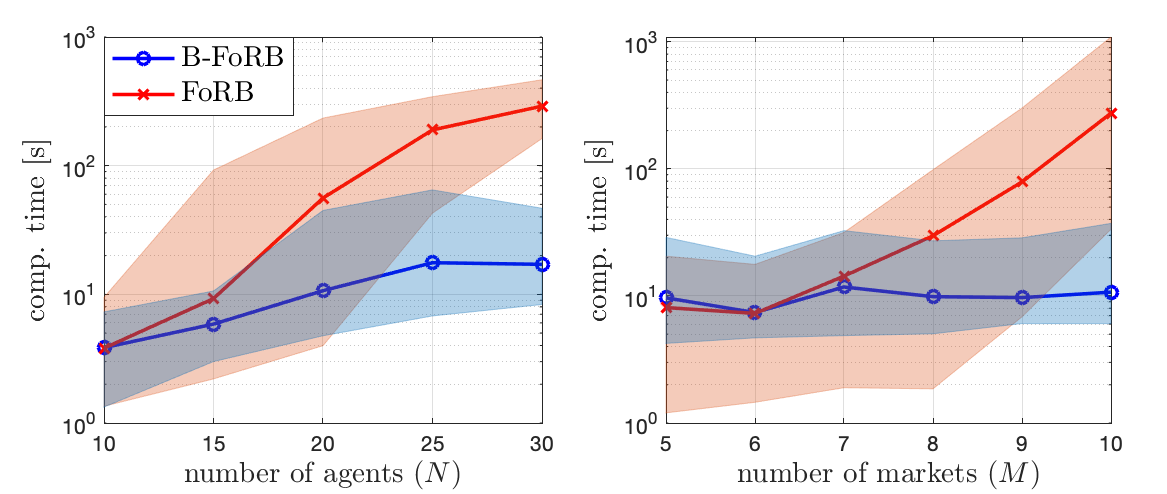}
					\caption{The total computational time/agent of the B-FoRB and FoRB with different number of agents $N$ {(left)} and different number of markets $M$ {(right)}. Each data point in the solid lines is the average of 30 Monte-Carlo simulations (shown as shaded areas).}
					\label{fig:var_nM}
				\end{figure}
				
				We showcase the performance of the B-FoRB (Algorithm \ref{alg:BFoRB})  by comparing it with the standard FoRB (c.f. Section \ref{sec:GNE_alt_alg}) using this problem. To this end, we carry out two batches of simulations. We vary the number of agents for the first batch, i.e, $N=10, 15, 20, 25, 30$, and fix $M=9$. On the other hand, we vary the number of markets ($M$) for the second batch, i.e.,  $M= 5, 6, \dots, 10$, and fix $N=20$. For each $N$ in the first batch (and each $M$ in the second batch), we carry out 30 Monte Carlo simulations. {Moreover,} for each simulation, we randomly set $\overline{y}^i, c_i$, for all $i \in \mc I$,  $\overline{Y}^\nu$ for all $\nu \in \{1,2,\dots,M\}$, $D$, and $\bar{P}$ based on the rules provided in \cite[{Section 7.2}]{yi19}. Similarly, $\overline{\nu}_i$, for each $i \in \mc I$, is randomly chosen from $[1,M]\cap \bb Z$ and $\underline{y}^\nu$, for each $\nu \in \{1,2,\dots,M\}$, is randomly chosen from $[0.05, 0.1]$. Furthermore, the step sizes $\gamma_i$ and $\zeta$ of both algorithms are set to be equal and the stopping criterion of both algorithms  is defined by $\|\bm  r^{(k)} \|_{\infty} \leq \varepsilon$, where $\varepsilon=10^{-5}$ and $\bm r^{(k)}=\bm \omega^{(k+1)} -\bm \omega^{(k)}$ for the B-FoRB whereas $\bm r^{(k)}=\bm \omega'^{(k+1)} -\bm \omega'^{(k)}$ for the FoRB. 
				{Based on the simulations, both the B-FoRB and the FoRB require similar  numbers of iterations to reach the stopping criterion. However, the advantage of the B-FoRB is on the computational time required to perform each iteration, which is lower than that of the FoRB,  {since the B-FoRB uses a closed-form formula of the mirror map onto a simplex \eqref{eq:x_upd_alg_BFoRB}.
					As shown in Figure \ref{fig:var_nM}, we can observe that the B-FoRB scales better with the number of agents and markets than the FoRB.} 

				\color{black}

				\subsection{Discrete-flow control problem} 
				\label{sec:dfc} 
				{Let us now} consider a discrete-flow control problem, which is formulated as a generalized potential integer games in \cite{sagratella17}, and discuss how our approach can complement the Gauss-Southwell (GS) algorithm \cite[Algorithm 1]{sagratella17}, which computes an $\varepsilon$-approximate equilibrium of the game \cite[Eq. (3)]{sagratella17}. Specifically, we can obtain an initial point for the GS algorithm by computing a variational MS-GNE of the game using {Algorithm \ref{alg:BFoRB}}.
				
				
				To that end, first we need to reformulate the problem.  In the formulation given in \cite[Section 2]{sagratella17}, the decision variable of each agent {(discrete flow, denoted by $a_i$)} is only integer and its mixed-strategy extension does not have a monotone pseudogradient because each agent is nonlinearly coupled in its cost function.  As noted in Remark \ref{rem:cost_trick}, we can overcome this issue by reformulating the game as a mixed-integer one, as follows:
				\begin{equation}	
					\hspace{-3pt} \begin{rcases}
						\underset{(a_i, y_i) \in \bb Z \times \bb R}{\min} \  & J_i^{\rm c}(y_i,\bm y_{-i}) +  J_i^{\rm d}(a_i)  \\
						\hspace{18pt} 	\operatorname{s.t.} & a_i - y_i = 0, \quad 
						0 \leq a_i \leq \bar{a}_i, \\
						& 0 \leq y_i \leq \bar{a}_i, \quad  
						\sum_{j \in \mc I} H_j^{\rm d} a_j \leq \rho,
					\end{rcases}
					\forall i \in \mc I,
					\label{eq:dfc_game}
				\end{equation}
				where, {similarly to \cite[Section 2]{sagratella17},} the cost function in {\eqref{eq:dfc_game}} is defined by
				{$	J_i^{\rm c}(y_i,\bm y_{-i}) \hspace{-2pt}= \sum_{l \in \mc P_i}\frac{q_l}{b_l + \rho_l - \sum_{j: l \in \mc P_j}y_j}$ and 
					$J_i^{\rm d}(a_i) = d_i \operatorname{log}(e_i(1+a_i)).$
				Here,} $\mc P_i$ denotes {a predefined} set of links used by agent $i$, $\rho_l$ denotes the capacity of link $l$, whereas $q_l$, $b_l$, $d_i$, and $e_i$ are positive constants. Note that the partial pseudogradient mapping $F^c(\bm y)$ is monotone. 
			Moreover, the vector $H_i^{\rm d} \in \bb R^L$, for each $i\in \mc I$, defines which links are used by player $i$, i.e., $[H_i^{\rm d}]_l = 1$ means that link $l$ is used by player $i$, and $[H_i^{\rm d}]_l = 0$ otherwise. {Additionally, we define}  $\rho = \col((\rho_l)_{l\in \mc E})$. We also have an upper bound on $a_i$, denoted by $\bar{a}_i$. Then, we have that $\mc A_i =\{a \in \bb Z \mid 0 \leq a \leq \bar{a}_i \}$ and, similarly, $\mc Y_i =\{y \in \bb R \mid 0 \leq y \leq \bar{a}_i \}$. 
			Therefore, we can obtain the mixed-strategy extension of \eqref{eq:dfc_game} by following the procedure in Section \ref{sec:gmi_games} and compute a variational MS-GNE of this game using one of the algorithms described in Section \ref{sec:GNE_alg}. Finally, an initial point for the GS algorithm  is obtained based on the variational MS-GNE that we have computed, e.g., by taking the integer decisions that have the highest probability. 
			
			\begin{figure}[t]
				\centering
				\includegraphics[scale=0.55]{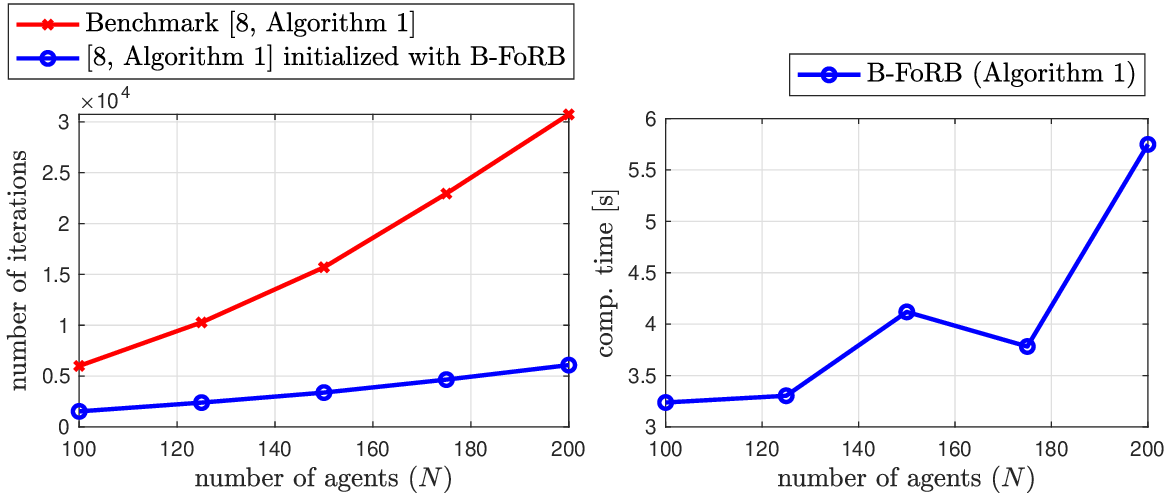}
				\caption{The {left} plot shows the comparison of the standard Gauss-Southwell algorithm \cite{sagratella17} (benchmark) and the modified one, which is initialized with the B-FoRB (Algorithm \ref{alg:BFoRB}), and the {right} plot shows the computational time/agent of the B-FoRB algorithm to compute an MS-GNE. Each data point is the average of {100} Monte-Carlo simulations.}
				\label{fig:gs_comp}
			\end{figure}
			
			We test the application of Algorithm \ref{alg:BFoRB} as a heuristic method for the initialization of the GS algorithm \cite[Algorithm 1]{sagratella17} via numerical experiments. We vary the number of agents $N =\{100,125,\dots,200\}$ and set the number of links to $60$. For each $n$, we generate 100 Monte-Carlo simulations, in which the cost coefficients follow \cite[Section 5.3]{sagratella17} whereas the links and the parameters in the constraints are randomly generated. Then, we compare the benchmark GS algorithm, which uses zero initial point, i.e., $\bm a^{(0)} = 0$, as specified in \cite[Section 5.3]{sagratella17}, and the GS algorithm initialized based on the solution of the B-FoRB algorithm (Algorithm \ref{alg:BFoRB}) {as previously explained}. Note that we set the approximation parameter $\varepsilon = 10^{-6}$ for the GS algorithm. Figure \ref{fig:gs_comp} shows the effectiveness of Algorithm \ref{alg:BFoRB} as an initialization heuristic as we observe a significant improvement in terms of the number of iterations of the GS algorithm (top plot). The price that we have to pay to reduce the number of iterations of the GS algorithm is reflected by the computational time/agent of the B-FoRB algorithm (shown in the bottom plot of Figure \ref{fig:gs_comp}). This extra computational time is relatively low compared to the total computational time of the GS method, { which} has to solve a mixed-integer programming at each iteration.

				\section{Conclusion}
				{The problem of finding a mixed-strategy generalized Nash equilibrium (MS-GNE) of  mixed-integer games with monotone pseudogradient can be formulated as an inclusion problem with a special structure. Based on this formulation, {a  novel Bregman forward-reflected-backward splitting method allows us to derive computationally inexpensive equilibrium seeking algorithms.} From our numerical experiments, the proposed algorithms are efficient not only to compute an MS-GNE but also as heuristic methods that initialize a pure GNE seeking algorithm for some classes of potential mixed-integer games.} 
				Computing an MS-GNE under time-varying and asynchronous communication networks are currently open research challenges. 
				Furthermore, recovering a pure (or an approximate pure) GNE from an MS-GNE is also an open problem. \color{black}
				\appendix 
				
				\section{Proof of Theorem \ref{th:conv_BreFoRB}}
				\label{ap:pf:th:conv_BreFoRB}
				
				
				
				\textit{Well-defined iterates:}  {Since $A$ is maximally monotone, $\Gamma A$ is maximally monotone in the $\Gamma^{-1}$-induced norm \cite[Proposition 20.24]{bauschke11}. 
					Furthermore, since $\varphi$ is a strictly convex function on $\operatorname{int}\dom(\varphi)$  {and separable (Assumption \ref{as:phi}), while} $\Gamma$ is a positive diagonal matrix, it holds that $\langle {\Gamma^{-1}}\nabla \varphi(\bm \omega)- {\Gamma^{-1}}\nabla \varphi(\bm \omega'), \bm \omega - \bm \omega' \rangle = \langle \nabla \varphi(\bm \omega)- \nabla \varphi(\bm \omega'), \bm \omega - \bm \omega' \rangle_{\Gamma^{-1}} > 0$, for all $\bm \omega \neq \bm \omega'$ and $\bm \omega, \bm \omega' \in\operatorname{int}\dom(\varphi)$ i.e. $\nabla \varphi$ is strictly monotone in the $\Gamma^{-1}$-induced norm. Hence, $(\nabla \varphi + \Gamma A)$ is strictly monotone in the $\Gamma^{-1}$-induced norm.} {Now, we suppose that $(z,{\bm{\omega}})$ and $(z,{\bm{\omega}}')$ belong to $\gph(\nabla \varphi + \Gamma A)^{-1}$. Then $z \in (\nabla \varphi + \Gamma A)({\bm{\omega}})$ and $z \in (\nabla \varphi + \Gamma A)({\bm{\omega}}')$. 
					Since $\langle z - z, \bm \omega -\bm \omega' \rangle_{\Gamma^{-1}} = 0$ and  $(\nabla \varphi + \Gamma A)$ is strictly monotone in the $\Gamma^{-1}$-induced norm, it must hold that $\bm \omega =\bm \omega'$ \cite[Def. 22.1.(ii)]{bauschke11}.  
				} Thus, we conclude that $(\nabla \varphi + \Gamma A)^{-1}$ is single-valued on $\operatorname{ran} (\nabla \varphi + \Gamma A)$. Moreover, since $\varphi$ is {essentially smooth, i.e. it is} continuously differentiable on $\operatorname{int}\dom(\varphi)$,  $\operatorname{ran} (\nabla \varphi + \Gamma A)^{-1}= \dom (\nabla \varphi) \cap \dom (A) = \operatorname{int} \dom (\varphi) \cap \dom (A)$ {\cite[Sec. 1.2]{bauschke11}}.  Furthermore, since $B$ is Lipschitz, $\nabla \varphi({\bm{\omega}}^{(0)}) - \Gamma (2B({\bm{\omega}}^{(0)})-B({\bm{\omega}}^{(-1)}))$ is singleton. We deduce ${\bm{\omega}}_1 \in \operatorname{int} \dom (\varphi) \cap \dom( A)$ is uniquely defined. Therefore, by induction we have that $({\bm{\omega}}^{(k)})_{k \in \bb N} \in \operatorname{int} \dom (\varphi) \cap \dom (A)$.

				{The remaining part of the proof builds upon \cite[Thm 2.5]{malitsky20}}. Specifically, by \cite[Lemma 2.46]{bauschke11}, we need to show the boundedness of the sequence $({\bm{\omega}}^{(k)})_{k \in \bb N}$ and the uniqueness of its cluster point {to prove convergence}. \color{black}
				
				\textit{Boundedness of iterates :}  By premultiplying both sides of the equality in \eqref{eq:BreFoRB} with $(\nabla \varphi + \Gamma A) $ and then rearranging the terms, we have that
				\begin{align}
					0 &\in \nabla \varphi ({\bm{\omega}}^{(k+1)}) + \Gamma A({\bm{\omega}}^{(k+1)}) - \nabla \varphi ({\bm{\omega}}^{(k)}) 
					+ \Gamma (2B({\bm{\omega}}^{(k)})-B({\bm{\omega}}^{(k-1)})). \label{eq:th1_incl1} 
				\end{align}
				Then, we multiply by $\Gamma^{-1}$ and rearrange the inclusion to obtain that
				{\small \begin{align}
						({\bm{\omega}}^{(k+1)}, \Gamma^{-1}  \nabla \varphi ({\bm{\omega}}^{(k)})  - (2B({\bm{\omega}}^{(k)})-B({\bm{\omega}}^{(k-1)})) 
						-  \Gamma^{-1}  \nabla \varphi({\bm{\omega}}^{(k+1)})) \in \gph( A). \label{eq:th1_incl1a}
				\end{align}}
				Moreover, consider ${\bm{\omega}} \in \zer(A + B) \cap \operatorname{int} \dom (\varphi)$, which implies $({\bm{\omega}},-B({\bm{\omega}})) \in \gph (A)$ and recall that $({\bm{\omega}}^{(k)})_{k \in \bb N} \in  \dom (A)$. By monotonicity of $A$ as well as considering the pairs $({\bm{\omega}},-B({\bm{\omega}}))$ and \eqref{eq:th1_incl1a}, we have that
				\begin{align}
					0 & \leq 
					\langle \nabla \varphi({\bm{\omega}}^{(k+1)})-\nabla \varphi({\bm{\omega}}^{(k)}), {\bm{\omega}} - {\bm{\omega}}^{(k+1)} \rangle \notag_{\Gamma^{-1}} 
					+  \langle B({\bm{\omega}}^{(k)}) -B({\bm{\omega}}^{(k-1)}), {\bm{\omega}} - {\bm{\omega}}^{(k+1)}\rangle 
					\notag\\
					& \quad 
					+  \langle B({\bm{\omega}}^{(k)}) -B({\bm{\omega}}), {\bm{\omega}} - {\bm{\omega}}^{(k+1)}\rangle \notag.
				\end{align}
				Notice that $\Gamma^{-1}:=\diag((\gamma_i^{-1})_{i=1}^{{n_\omega}})$ is a positive diagonal matrix.  By denoting the $i$-th component of $\bm \omega$ as $\omega_i \in \bb R$, we can write the Legendre function $\varphi(\bm \omega) = \sum_{i=1}^{{n_\omega}} \varphi_i(\omega_i)$. Therefore, we can instead consider the function $\hat{\varphi}(\bm \omega)=\sum_{i =1}^{{n_\omega}} \gamma_i^{-1} \varphi_i(\omega_i)$, which is also a  {Legendre} function, with $\dom(\hat \varphi) = \dom(\varphi)$  {(c.f. the proof of \cite[Theorem 5.12]{bauschke97})}, and has gradient $\nabla \hat{\varphi}(\bm \omega) = \Gamma^{-1} \nabla \varphi (\bm \omega)$. Hence, by considering $\hat{\varphi}$ \color{black} and applying the three-point identity of Bregman distance \cite[Proposition 2.3.(ii)]{bauschke03}, the first addend on the right-hand side of the inequality can be written as  	
					$\langle \nabla \varphi({\bm{\omega}}^{(k+1)})-\nabla \varphi({\bm{\omega}}^{(k)}), {\bm{\omega}} - {\bm{\omega}}^{(k+1)} \rangle_{\Gamma^{-1}} 
					= \langle \nabla \hat \varphi({\bm{\omega}}^{(k+1)})-\nabla \hat \varphi({\bm{\omega}}^{(k)}), {\bm{\omega}} - {\bm{\omega}}^{(k+1)} \rangle
					= \dist_{\hat \varphi}({\bm{\omega}},{\bm{\omega}}^{(k)}) - \dist_{\hat \varphi}({\bm{\omega}},{\bm{\omega}}^{(k+1)}) 
					- \dist_{\hat \varphi}({\bm{\omega}}^{(k+1)},{\bm{\omega}}^{(k)}).
					$ 
					Moreover, we rewrite the last two addends as 
						$
						\langle B({\bm{\omega}}^{(k)}) -B({\bm{\omega}}^{(k-1)}), {\bm{\omega}} - {\bm{\omega}}^{(k+1)}\rangle 
						\hspace{1.5pt}=\hspace{1.5pt} \langle B({\bm{\omega}}^{(k)}) -B({\bm{\omega}}^{(k-1)})), {\bm{\omega}} - {\bm{\omega}}^{(k)}\rangle
						+ \langle B({\bm{\omega}}^{(k)}) -B({\bm{\omega}}^{(k-1)}), {\bm{\omega}}^{(k)} - {\bm{\omega}}^{(k+1)}\rangle;$ 
						and 
						$
						\langle  B({\bm{\omega}}^{(k)}) \hspace{-0.8pt}-\hspace{-0.8pt}B({\bm{\omega}}), {\bm{\omega}}\hspace{-0.8pt}-\hspace{-0.8pt} {\bm{\omega}}^{(k+1)}\rangle 
						\hspace{-1.8pt}=\hspace{-1.8pt}\langle B({\bm{\omega}}^{(k)}) \hspace{-0.8pt}-\hspace{-0.8pt}B({\bm{\omega}}^{(k+1)}), {\bm{\omega}} - {\bm{\omega}}^{(k+1)}\rangle 
						+  \langle B({\bm{\omega}}^{(k+1)}) -B({\bm{\omega}}), {\bm{\omega}} - {\bm{\omega}}^{(k+1)}\rangle.
						$
						Thus, we obtain
						\begin{equation*}
							\begin{aligned}
								0 &\leq  \dist_{\hat \varphi}({\bm{\omega}},{\bm{\omega}}^{(k)}) - \dist_{\hat \varphi}({\bm{\omega}},{\bm{\omega}}^{(k+1)}) 
								- \dist_{\hat \varphi}({\bm{\omega}}^{(k+1)},{\bm{\omega}}^{(k)}) 
								\\
								&\quad 
								+   \langle B({\bm{\omega}}^{(k)}) -B({\bm{\omega}}^{(k-1)})), {\bm{\omega}} - {\bm{\omega}}^{(k)}\rangle 
								+ \langle B({\bm{\omega}}^{(k)}) -B({\bm{\omega}}^{(k-1)}), {\bm{\omega}}^{(k)} - {\bm{\omega}}^{(k+1)}\rangle 
								\\
								&\quad + \langle B({\bm{\omega}}^{(k)}) -B({\bm{\omega}}^{(k+1)}), {\bm{\omega}} - {\bm{\omega}}^{(k+1)}\rangle  
								+  \langle B({\bm{\omega}}^{(k+1)}) -B({\bm{\omega}}), {\bm{\omega}} - {\bm{\omega}}^{(k+1)}\rangle.
							\end{aligned}
						\end{equation*}

						We can rearrange and upper bound {the preceding inequality}  as follows:
						\begin{align}
							\label{eq:th1_ineq2}
							&\dist_{\hat \varphi}({\bm{\omega}},{\bm{\omega}}^{(k+1)}) +\dist_{\hat \varphi}({\bm{\omega}}^{(k+1)},{\bm{\omega}}^{(k)})  
							+ \langle B({\bm{\omega}}^{(k+1)})- B({\bm{\omega}}^{(k)}), {\bm{\omega}} - {\bm{\omega}}^{(k+1)}\rangle 
							\\
							&\leq \dist_{\hat \varphi}({\bm{\omega}},{\bm{\omega}}^{(k)})  + \langle B({\bm{\omega}}^{(k)}) -B({\bm{\omega}}^{(k-1)})), {\bm{\omega}} - {\bm{\omega}}^{(k)}\rangle  \notag\\
							&\quad + \langle B({\bm{\omega}}^{(k)}) -B({\bm{\omega}}^{(k-1)}), {\bm{\omega}}^{(k)} - {\bm{\omega}}^{(k+1)}\rangle 
							+   \langle B({\bm{\omega}}^{(k+1)}) -B({\bm{\omega}}), {\bm{\omega}} - {\bm{\omega}}^{(k+1)}\rangle 
							\notag \\
							&\leq \dist_{\hat \varphi}({\bm{\omega}},{\bm{\omega}}^{(k)})  + \langle B({\bm{\omega}}^{(k)}) -B({\bm{\omega}}^{(k-1)})), {\bm{\omega}} - {\bm{\omega}}^{(k)}\rangle 
							\notag \\
							&\quad 
							+ \frac{ \ell_B}{2} \left(\|{{\bm{\omega}}^{(k)}-{\bm{\omega}}^{(k-1)}}\|^2 + \|   {\bm{\omega}}^{(k+1)}\hspace{-2pt}-\hspace{-2pt}{\bm{\omega}}^{(k)}\|^2 \right), 	\notag
						\end{align}
						where the second inequality is obtained by discarding $ \langle B({\bm{\omega}}^{(k+1)}) -B({\bm{\omega}}), {\bm{\omega}} - {\bm{\omega}}^{(k+1)}\rangle$, which is nonpositive  due to the monotonicity of $B$ and 
						considering an upper-bound of the term $\langle B({\bm{\omega}}^{(k)}) -B({\bm{\omega}}^{(k-1)}), {\bm{\omega}}^{(k)} - {\bm{\omega}}^{(k+1)}\rangle$ due to the Lipschitz continuity of $B$, i.e., 
						$\langle B({\bm{\omega}}^{(k)}) -B({\bm{\omega}}^{(k-1)}), {\bm{\omega}}^{(k)} - {\bm{\omega}}^{(k+1)}\rangle
						\leq  \frac{ \ell_B}{2}\hspace{-3pt} \left(\|{{\bm{\omega}}^{(k)}\hspace{-2pt}-\hspace{-2pt}{\bm{\omega}}^{(k-1)}}\|^2 \hspace{-2pt}+\hspace{-2pt} \|   {\bm{\omega}}^{(k+1)}\hspace{-2pt}-\hspace{-2pt}{\bm{\omega}}^{(k)}\|^2 \right).$ \color{black}
						Since $\varphi$ is $\sigma_{\varphi}$-strongly convex {on $\operatorname{int}\dom(\varphi)\cap\dom(A)$}, we observe that $\hat \varphi$ is $\sigma_{\hat \varphi}$-strongly convex {on $\operatorname{int}\dom(\varphi)\cap\dom(A)$}, where $\sigma_{\hat \varphi} = \sigma_{\varphi}/(\max_{i} \gamma_i)$. \color{black} Therefore, {since $\bm \omega^{(k)} \in \operatorname{int}\dom(\varphi)\cap\dom(A)$, for all $k \in \bb N$,} we also have that
						$\frac{\sigma_{\hat \varphi}}{2} \|{\bm{\omega}}^{(k+1)}-{\bm{\omega}}^{(k)}\|^2 \leq \dist_{\hat \varphi}({\bm{\omega}}^{(k+1)},{\bm{\omega}}^{(k)})$, {for all $k \in \bb N$}, which, {together with  \eqref{eq:th1_ineq2},} results in
						\begin{equation}
							\begin{aligned}
								&\dist_{\hat \varphi}({\bm{\omega}},{\bm{\omega}}^{(k+1)}) + \langle B({\bm{\omega}}^{(k+1)})- B({\bm{\omega}}^{(k)}), {\bm{\omega}} - {\bm{\omega}}^{(k+1)}\rangle 
								+ \frac{\sigma_{\hat \varphi} -  \ell_B }{2} \|{\bm{\omega}}^{(k+1)}-{\bm{\omega}}^{(k)}\|^2  
								\\
								&\leq \dist_{\hat \varphi}({\bm{\omega}},{\bm{\omega}}^{(k)})  + \langle B({\bm{\omega}}^{(k)}) -B({\bm{\omega}}^{(k-1)})), {\bm{\omega}} - {\bm{\omega}}^{(k)}\rangle 
								+ \frac{\ell_B }{2} \|{\bm{\omega}}^{(k)}-{\bm{\omega}}^{(k-1)}\|^2. 
							\end{aligned}
							\label{eq:th1_ineq3}
						\end{equation}
						
						Since $\gamma_i \in (0, \frac{\sigma_{ \varphi}}{2 \ell_B} )$, for all $i=1,\dots,n$, it holds that $ \ell_B (\max_i \gamma_i) < \frac{\sigma_{ \varphi}}{2} \Leftrightarrow \ell_B < \frac{\sigma_{ \hat \varphi}}{2} $ and there exists $\varepsilon > 0$ such that $\max_i \gamma_i = \frac{\sigma_{ \varphi}-2\varepsilon}{2 \ell_B}$, implying $\sigma_{ \hat{\varphi}} - \ell_B  = \frac{\sigma_{ \hat \varphi}}{2} + \varepsilon $. \color{black} Combining these facts and \eqref{eq:th1_ineq3}, we obtain that
						\begin{equation}
							\begin{aligned}
								&\langle B({\bm{\omega}}^{(k+1)})- B({\bm{\omega}}^{(k)}), {\bm{\omega}} - {\bm{\omega}}^{(k+1)}\rangle 
								+\dist_{\hat \varphi}({\bm{\omega}},{\bm{\omega}}^{(k+1)}) + \tfrac{\sigma_{\hat \varphi} +2 \varepsilon}{4} \|{\bm{\omega}}^{(k+1)}-{\bm{\omega}}^{(k)}\|^2  
								\\
								&\leq  \langle B({\bm{\omega}}^{(k)}) -B({\bm{\omega}}^{(k-1)})), {\bm{\omega}} - {\bm{\omega}}^{(k)}\rangle 
								+ \dist_{\hat \varphi}({\bm{\omega}},{\bm{\omega}}^{(k)}) + \tfrac{\sigma_{\hat \varphi}}{4} \|{\bm{\omega}}^{(k)}-{\bm{\omega}}^{(k-1)}\|^2. 
							\end{aligned}
							\label{eq:th1_ineq4}
						\end{equation}
						
						By summing \eqref{eq:th1_ineq4}
						over the iterates until $k$, we obtain that
						\begin{align}
							&\sum_{l=0}^k \Big( \langle B({\bm{\omega}}^{(l+1)})- B({\bm{\omega}}^{(l)}), {\bm{\omega}} - {\bm{\omega}}^{(l+1)}\rangle 
							\hspace{-1.5pt}+\hspace{-1.5pt}\dist_{\hat \varphi}({\bm{\omega}},{\bm{\omega}}^{(l+1)}) + \tfrac{\sigma_{\hat  \varphi} \hspace{-1.5pt}+\hspace{-1.5pt}2 \varepsilon}{4} \|{\bm{\omega}}^{(l+1)}-{\bm{\omega}}^{(l)}\|^2 \Big)
							\notag \\
							&\leq \sum_{l=0}^k \Big(\Big. \langle B({\bm{\omega}}^{(l)}) -B({\bm{\omega}}^{(l-1)})), {\bm{\omega}} - {\bm{\omega}}^{(l)}\rangle 
							+ \dist_{\hat \varphi}({\bm{\omega}},{\bm{\omega}}^{(l)}) + \tfrac{\sigma_{\hat \varphi}}{4} \|{\bm{\omega}}^{(l)}-{\bm{\omega}}^{(l-1)}\|^2 \Big.\Big), \notag
						\end{align}
						which consists of a telescoping sum and is equivalent to
						\begin{equation}
							\begin{aligned}
								&\langle B({\bm{\omega}}^{(k+1)})- B({\bm{\omega}}^{(k)}), {\bm{\omega}} - {\bm{\omega}}^{(k+1)}\rangle 
								+\dist_{\hat \varphi}({\bm{\omega}},{\bm{\omega}}^{(k+1)}) + \tfrac{\sigma_{\hat \varphi} }{4} \|{\bm{\omega}}^{(k+1)}-{\bm{\omega}}^{(k)}\|^2 
								\\
								&
								+ \tfrac{ \varepsilon}{2}\sum_{l=0}^k  \|{\bm{\omega}}^{(l+1)}-{\bm{\omega}}^{(l)}\|^2  
								\\
								&
								\leq  \langle B({\bm{\omega}}^{(0)}) -B({\bm{\omega}}^{(-1)})), {\bm{\omega}} - {\bm{\omega}}^{(0)}\rangle 
								+\dist_{\hat \varphi}({\bm{\omega}},{\bm{\omega}}^{(0)}) + \tfrac{\sigma_{\hat \varphi}}{4} \|{\bm{\omega}}^{(0)}-{\bm{\omega}}^{(-1)}\|^2.
							\end{aligned}
							\label{eq:th1_ineq5}
						\end{equation}
						
						Now, we lower bound the term $\langle B({\bm{\omega}}^{(k+1)})- B({\bm{\omega}}^{(k)}), {\bm{\omega}} - {\bm{\omega}}^{(k+1)}\rangle$ via the Lipschitz continuity of $B$, i.e.,
						\begin{align}
							\langle B({\bm{\omega}}^{(k+1)})- B({\bm{\omega}}^{(k)}), {\bm{\omega}} - {\bm{\omega}}^{(k+1)}\rangle 
							&\geq - \frac{ \ell_B }{2} \left( \|{\bm{\omega}}^{(k+1)}-{\bm{\omega}}^{(k)}\|^2 +\|{\bm{\omega}}^{(k+1)}-{\bm{\omega}} \|^2 \right) \notag\\
							&\geq - \frac{\sigma_{\hat \varphi}}{4} \|{\bm{\omega}}^{(k+1)}-{\bm{\omega}}^{(k)}\|^2 - \frac{ \ell_B }{2}\|{\bm{\omega}}^{(k+1)}-{\bm{\omega}} \|^2,
							\label{eq:th1_ineq6}
						\end{align} \color{black}
						where the last inequality holds due to the choice of $\Gamma$, i.e., $\max_i \gamma_i \ell_B < \frac{\sigma_{\varphi}}{2}$. 
						By combining \eqref{eq:th1_ineq5} and \eqref{eq:th1_ineq6} as well as using the fact that $\frac{\sigma_{\hat \varphi}}{2} \|{\bm{\omega}}^{(k+1)} - {\bm{\omega}} \|^2 \leq \dist_{\hat \varphi}({\bm{\omega}},{\bm{\omega}}^{(k+1)})$ and ${\bm{\omega}}^{(0)} = {\bm{\omega}}^{(-1)}$, we have that
						\begin{align}
							\frac{\sigma_{\hat \varphi}\hspace{-2pt}-\hspace{-2pt}\ell_B }{2} \|{\bm{\omega}}^{(k+1)}\hspace{-2pt}-\hspace{-2pt}{\bm{\omega}} \|^2 \hspace{-2pt}+\hspace{-2pt} \frac{ \varepsilon}{2}\sum_{l=0}^k  \|{\bm{\omega}}^{(l+1)}\hspace{-2pt}-\hspace{-2pt}{\bm{\omega}}^{(l)}\|^2  
							\leq \dist_{\hat \varphi}({\bm{\omega}},{\bm{\omega}}^{(0)}) , \label{eq:th1_ineq7}
						\end{align}
						where $\sigma_{\hat \varphi}-\ell_B> 0$. The inequality \eqref{eq:th1_ineq7} implies that $\lim_{k\to \infty} \|{\bm{\omega}}^{(k+1)} - {\bm{\omega}}^{(k)} \|^2 = 0$. Moreover, since $\frac{\sigma_{\hat \varphi}- \ell_B }{2} \|{\bm{\omega}}^{(k+1)}-{\bm{\omega}} \|^2 \leq \dist_{\hat \varphi}({\bm{\omega}},{\bm{\omega}}^{(0)}) < +\infty$, for all $k \in \bb N$, we deduce that the sequence $({\bm{\omega}}^{(k)})_{k \in \bb N}$ is bounded. 
						
						
						
						\textit{Cluster points are solutions:} 
						Let $\hat {\bm{\omega}}$ be an arbitrary  cluster point of the bounded sequence $({\bm{\omega}}^{(k)} )_{k\in\bb N}$. Now, we  show that $\hat {\bm{\omega}}$ is contained in  $\zer(A+B) \cap \operatorname{int} \dom ( \varphi)$.  { To this end, first we show that, for any fixed point $\bm z \in \zer(A+B) \cap \operatorname{int}( \varphi)$ (which is assumed to exist), $\dist_{\hat \varphi}(\bm z, \bm{\omega}^{(k)})$ converges.  By combining \eqref{eq:th1_ineq4} and \eqref{eq:th1_ineq6}, we deduce that the right-hand side of the inequality \eqref{eq:th1_ineq4} is non-negative for all $k \geq 0$. Therefore, the inequality \eqref{eq:th1_ineq4} and \cite[Lemma 5.31]{bauschke11} assert the existence of the limit 
							\begin{align}
								\lim_{k \to \infty} \Big(\Big. \dist_{\hat \varphi}(\bm z,{\bm{\omega}}^{(k)}) + \frac{\sigma_{\hat \varphi}}{4} \|{\bm{\omega}}^{(k)}-{\bm{\omega}}^{(k-1)}\|^2 
								+ \langle B({\bm{\omega}}^{(k)}) -B({\bm{\omega}}^{(k-1)})), \bm z - {\bm{\omega}}^{(k)}\rangle \Big.\Big). 
								\label{eq:alpha_seq}
							\end{align}
							Since $B$ is continuous and $\lim_{k \to \infty}\|{\bm{\omega}}^{(k)}-{\bm{\omega}}^{(k-1)}\|^2=0$, the limit in \eqref{eq:alpha_seq} is equal to the limit of $\dist_{\hat \varphi}(\bm z, \bm{\omega}^{(k)})$, implying that the sequence $(\dist_{\hat \varphi}(\bm z, \bm{\omega}^{(k)}))_{k \in \bb N}$ is bounded.  Since $\varphi$ is essentially smooth, $\bm z \in \operatorname{int}\dom( \varphi)$, $(\bm \omega^{(k)})_{k \in \bb N} \in  \operatorname{int}\dom( \varphi)$, and the sequence $(\dist_{\hat \varphi}(\bm z, \bm{\omega}^{(k)}))_{k \in \bb N}$ is bounded, \cite[Theorem 3.8(ii)]{bauschke97} asserts that every cluster point belongs to $\operatorname{int}\dom(\varphi)$.} Furthermore, since we assume that $\zer(A+B) \cap \operatorname{int} \dom ( \varphi) \neq \varnothing$, we only need to show that $0 \in (A+B)(\hat {\bm{\omega}})$. From \eqref{eq:th1_incl1}, we obtain
						{\small 	\begin{align}
								&\Gamma^{-1}\left(\nabla  \varphi ({\bm{\omega}}^{(k)})-\nabla  \varphi ({\bm{\omega}}^{(k+1)})\right) -2B({\bm{\omega}}^{(k)}) 
								+B({\bm{\omega}}^{(k-1)}) + B({\bm{\omega}}^{(k+1)}) \in   (A+B)({\bm{\omega}}^{(k+1)}),   \label{eq:th1_incl2}
						\end{align}}
						for all $k \geq 0$. 
						Since  $(A+B)$ is maximally monotone, $\gph(A+B)$ is sequentially closed  \cite[Proposition 20.38]{bauschke11}. Additionally, $B$ is continuous and $\nabla \hat \varphi(\cdot)$ is also continuous in $\operatorname{int} \dom ( \varphi)$ (Definition \ref{def:Breg_f}). Therefore, by taking the limit along a subsequence of $( {\bm{\omega}}^{(k)})$ that converges to $\hat {\bm{\omega}}$ in \eqref{eq:th1_incl2},  the limit of left-hand side term of the inclusion \eqref{eq:th1_incl2} is 0, and indeed $0 \in (A+B)(\hat {\bm{\omega}})$.  

						\textit{Uniqueness of  cluster point:}  It remains to show that the set of  cluster points of $({\bm{\omega}}^{(k)} )_{k\in\bb N}$ is a singleton.  
						{We recall that for an arbitrary cluster point $\hat {\bm \omega} \in \zer(A+B) \cap \operatorname{int}\dom(\varphi)$,  $\lim_{k\to \infty} \dist_{\hat \varphi}(\hat{\bm \omega}, \bm{\omega}^{(k)})$ exists.} Now, we take two subsequences $\bm{\omega}^{(k_n)}$ and $\bm{\omega}^{(l_n)}$ and denote their  cluster points by $ \hat{\bm \omega}_1$ and $\hat{\bm \omega}_2$, respectively.  We know that the sequences $(\dist_{\hat \varphi}(\hat{\bm \omega}_1, \bm{\omega}^{(k)}))_{k \in \bb N}$ and $(\dist_{\hat \varphi}(\hat{\bm \omega}_2, \bm{\omega}^{(k)}))_{k \in \bb N}$ converge. Therefore, using the three-point identity of Bregman distance \cite[Proposition 2.3.(ii)]{bauschke03}, we have that the sequence
						$(\langle \hat{\bm \omega}_1 - \hat{\bm \omega}_2, \nabla \hat \varphi(\bm{\omega}^{(k)}) - \nabla \hat \varphi(\hat{\bm \omega}_2) \rangle)_{k\in \bb N}$,  
					which is equal to
					$(\dist_{\hat \varphi}(\hat{\bm \omega}_2, \bm{\omega}^{(k)})+\dist_{\hat \varphi}(\hat{\bm \omega}_1, \hat{\bm \omega}_2)-\dist_{\hat \varphi}(\hat{\bm \omega}_1, \bm{\omega}^{(k)}))_{k \in \bb N}$
					converges. Taking the subsequence $\bm{\omega}^{(l_n)}$ and using the continuity of $\hat \varphi$, we obtain that 
					$\eta 
					= \lim_{n \to \infty} \langle \hat{\bm \omega}_1 - \hat{\bm \omega}_2, \nabla \hat \varphi(\bm{\omega}^{(l_n)}) - \nabla \hat \varphi(\hat{\bm \omega}_2) \rangle 
					= \langle \hat{\bm \omega}_1 - \hat{\bm \omega}_2, \nabla \hat \varphi(\hat{\bm{\omega}}_2) - \nabla \hat \varphi(\hat{\bm \omega}_2) \rangle = 0.
					$ 
					By the continuity of $\hat \varphi$ and considering the subsequence $\bm{\omega}^{(k_n)}$, we also have that 
					$
					\lim_{n \to \infty} \langle \hat{\bm \omega}_1 - \hat{\bm \omega}_2, \nabla \hat \varphi(\bm{\omega}^{(k_n)}) - \nabla \hat \varphi(\hat{\bm \omega}_2) \rangle = \eta.
					$
					Since $\eta = 0$, we obtain that
					$
					\lim_{n \to \infty} \langle \hat{\bm \omega}_1 - \hat{\bm \omega}_2, \nabla \hat \varphi(\bm{\omega}^{(k_n)}) - \nabla \hat \varphi(\hat{\bm \omega}_2) \rangle 
					= \langle \hat{\bm \omega}_1 - \hat{\bm \omega}_2, \nabla \hat \varphi(\hat{\bm{\omega}}_1) - \nabla \hat \varphi(\hat{\bm \omega}_2) \rangle = 0.
					$ 
					However, $\hat \varphi$ is {essentially strictly} convex. Hence, the preceding equality holds only if $\hat{\bm{\omega}}_1 =\hat{\bm \omega}_2$. We conclude that the cluster point of $({\bm{\omega}}^{(k)} )_{k\in\bb N}$ is unique. \qedd
					
					\section{Proof of Lemma \ref{le:Lips_B}}
					\label{ap:pf:le:Lips_B}
					Due to Assumption \ref{as:mon_F}, $\mc T_2$ is maximally monotone since it concatenates $F$, which is maximally monotone, and a constant vector $\begin{bmatrix} \bm \theta^{\top} & \rho^{\top} \end{bmatrix}^{\top}$. Furthermore, $\mc T_3$ is monotone  as can be observed that, for any {$\bm \omega_1,\bm \omega_2 \in \bb R^{m+n} \times \bb R_{\geq 0}^{n_\theta} \times \bb R^{ n_\rho} $}, 
					\begin{align*}
						\langle \mc T_3({\bm{\omega}}_1) - \mc T_3({\bm{\omega}}_2), {\bm{\omega}}_1 - {\bm{\omega}}_1 \rangle
						&= \langle g^{\rm c} (\bm y_2)+\nabla g^{\rm c}(\bm y_1)(\bm y_1-\bm y_2)-g^{\rm c} (\bm y_1),\bm \mu_1 \rangle\\
						&\quad + \langle g^{\rm c} (\bm y_1)-\nabla g^{\rm c}(\bm y_2)(\bm y_1-\bm y_2)-g^{\rm c} (\bm y_2),\bm \mu_2 \rangle
						\geq 0,
					\end{align*}%
					where the inequality holds since $g^{\rm c}$ is convex {and $\bm \mu_1,\bm \mu_2 \in \bb R_{\geq 0}^{n_\theta}$}. Moreover, $\mc T_3$ is maximally monotone since it is continuous. The maximal monotonicity of $\mc T_4$ can be proven similarly. Since $\dom (\mc T_2) = \bb R^{{n_{\bar \omega}}}$, the maximal monotonicity of $B$ follows from \cite[Corollary 25.5(i)]{bauschke11}. Finally, $\mc T_2$ is $\max(\ell_{F^{\rm d}},\ell_{F^{\rm c}})$-Lipschitz due to Assumption \ref{as:mon_F} whereas $\mc T_3$ and $\mc T_4$ are $\ell_{\mc T_3}$- and $\ell_{\mc T_4}$-Lipschitz, respectively, due to Assumption \ref{as:L_T34}. Thus, $B$ is also Lipschitz 
					with constant $\ell_B = \max_{j\in\{2,3,4\}}(\ell_{\mc T_j})$.
					\qedd
					

					\section{Proof of Lemma \ref{le:str_cvx_Bregf}}
					\label{ap:pf:le:str_cvx_Bregf}	
					The functions $\varphi_i^{\rm NE}\colon \bb R_{\geq 0}^{m_i} \to \bb R$, for all $i \in \mc I$, and  $\varphi^{\mathrm{E}}\colon \bb R^{n + n_\theta + n_\rho} \to \bb R$ are Legendre (see Examples \ref{ex:E_entropy}-\ref{ex:NE_entropy}). Therefore, the function $\varphi$ in \eqref{eq:Breg_f_gmi} is also Legendre (c.f. the proof of \cite[Theorem 5.12]{bauschke97}). 
					Furthermore, \cite[Proposition 5.1]{beck03} assesses that $\varphi_i^{\rm NE}$ is $1$-strongly convex on $\operatorname{rint}(\mc X_i)$ with respect to $1$-norm. Consequently, it is also 1-strongly convex with respect to the Euclidean norm since $\|\cdot \|_1 \geq \|\cdot\|$. Moreover,  $\varphi^{\mathrm{E}} \colon \bb R^{n+n_\theta+n_\rho} \to \bb R$ is  $1$-strongly convex on its domain with respect to the Euclidean norm. Thus, the function $\varphi$ in \eqref{eq:Breg_f_gmi} $1$-strongly convex on $\prod_{i\in \mc I}\operatorname{rint}(\mc X) \times \bb R^{n + n_\theta + n_\rho}$ with respect to the Euclidean norm. \qedd
					\color{black}
					
					\section{Proof of Theorem \ref{th:conv_alg_BFoRB}}
					\label{ap:pf:th:conv_alg_BFoRB}
					First, we show  that the updates in Algorithm \ref{alg:BFoRB} can be compactly written as \eqref{eq:BreFoRB} with $A$ and $B$ defined in \eqref{eq:op_split} and $\varphi$ defined in \eqref{eq:Breg_f_gmi} {in the next lemma}.
					
					\begin{lemma}
						\label{pr:BFoRB}
						The B-FoRB iteration \eqref{eq:BreFoRB} applied to the operators $A$ and  $B$ in \eqref{eq:op_split} with Legendre function $\varphi$ in \eqref{eq:Breg_f_gmi}  is equivalent to the updates in Algorithm \ref{alg:BFoRB}. 
					\end{lemma}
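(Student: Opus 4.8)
The plan is to show that the single abstract recursion \eqref{eq:BreFoRB} decouples, block by block, into the four families of updates \eqref{eq:x_upd_alg_BFoRB}--\eqref{eq:lambda_upd_alg_BFoRB}, by exploiting the separable structure of both the Legendre function $\varphi$ in \eqref{eq:Breg_f_gmi} and the operator $A=\mc T_1$ in \eqref{eq:op_split}. First I would fix the diagonal matrix $\Gamma$ so that every component associated with an agent's triple $(x_i,y_i,\mu_i)$ carries the step size $\gamma_i$ and every component of $\lambda$ carries $\zeta$. Rewriting \eqref{eq:BreFoRB} in inclusion form (cf.\ \eqref{eq:th1_incl1}) and premultiplying by $\Gamma^{-1}$, the resolvent $(\nabla\varphi+\Gamma A)^{-1}$ inherits the Cartesian-product structure of $A=\nc_{\mc X}\times\nc_{\mc Y}\times\nc_{\bb R^{n_\theta}_{\geq 0}}\times\nc_{\bb R^{Nn_\rho}_{\geq 0}}$ together with the coordinate-wise action of $\nabla\varphi$, so it splits into one independent resolvent per block.

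Next I would match the forward term $2B(\bm\omega^{(k)})-B(\bm\omega^{(k-1)})$ against the auxiliary variables of the algorithm. Reading the four coordinates of $B=\mc T_2+\mc T_3+\mc T_4$ off \eqref{eq:kkt_op1}, a direct inspection shows that the $x_i$, $y_i$, $\mu_i$, and $\lambda$ blocks of $B(\bm\omega^{(k)})$ coincide exactly with the quantities $\accentset{\circ}{x}_i^{(k)}$, $\accentset{\circ}{y}_i^{(k)}$, $\accentset{\circ}{\mu}_i^{(k)}$, and $\accentset{\circ}{\lambda}^{(k)}$ defined in \eqref{eq:x_upd_alg_BFoRB}--\eqref{eq:lambda_upd_alg_BFoRB}; consequently the extrapolated input to each block resolvent is precisely $\nabla\varphi(\cdot)-\gamma\,(2\accentset{\circ}{(\cdot)}^{(k)}-\accentset{\circ}{(\cdot)}^{(k-1)})$.

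For the Euclidean blocks $(y_i,\mu_i,\lambda)$ I would use $\nabla\varphi^{\mathrm E}=\Id$ together with the standard identity $(\Id+\gamma\nc_C)^{-1}=\proj_C$ \cite[Example 23.4]{bauschke11}. With $C=\mc Y_i$ this yields the projection in \eqref{eq:y_upd_alg_BFoRB}, while with $C=\bb R_{\geq 0}^{n_{\theta_i}}$ and $C=\bb R_{\geq 0}^{n_\rho}$ it yields the projections $\proj_{\geq 0}(\cdot)$ in \eqref{eq:mu_upd_alg_BFoRB} and \eqref{eq:lambda_upd_alg_BFoRB}, respectively.

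The step carrying the real content is the $x_i$ block, where I expect the main work. Since $\nabla\varphi_i^{\mathrm{GS}}(x_i)=\col((\ln x_i^j+1)_{j=1}^{m_i})$, the block resolvent $(\nabla\varphi_i^{\mathrm{GS}}+\gamma_i\nc_{\Delta(\mc A_i)})^{-1}$ evaluated at $\nabla\varphi_i^{\mathrm{GS}}(x_i^{(k)})-\gamma_i(2\accentset{\circ}{x}_i^{(k)}-\accentset{\circ}{x}_i^{(k-1)})$ is characterized as the minimizer over $\Delta(\mc A_i)$ of $\dist_{\varphi_i^{\mathrm{GS}}}(\cdot,x_i^{(k)})+\gamma_i\langle 2\accentset{\circ}{x}_i^{(k)}-\accentset{\circ}{x}_i^{(k-1)},\cdot\rangle$. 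Writing its KKT conditions and eliminating the scalar multiplier of the simplex constraint $\1^\top x_i=1$ gives the exponentiated closed form $(x_i^j)^{(k+1)}\propto (x_i^j)^{(k)}\exp(-\gamma_i[2\accentset{\circ}{x}_i^{(k)}-\accentset{\circ}{x}_i^{(k-1)}]_j)$, which upon normalization is exactly \eqref{eq:x_upd_alg_BFoRB}; the same closed-form simplex mirror map is tabulated in \cite[Table 1]{gao20b}. Collecting the four block updates reproduces Algorithm \ref{alg:BFoRB}, establishing the claimed equivalence.
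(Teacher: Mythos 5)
Your proposal is correct and follows essentially the same route as the paper: exploit the block-separability of $\varphi$, $A=\mc T_1$, $B$, and $\Gamma$ to decouple \eqref{eq:BreFoRB} into per-block Bregman-proximal steps, identify the blocks of $B$ with the auxiliary variables $\accentset{\circ}{x}_i,\accentset{\circ}{y}_i,\accentset{\circ}{\mu}_i,\accentset{\circ}{\lambda}$, reduce the Euclidean blocks to projections, and obtain the exponentiated/normalized update for the simplex block. The only cosmetic difference is that you derive the entropic mirror step via the KKT conditions whereas the paper simply cites the known closed form \cite[Example 4.2]{belmega18}.
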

					\begin{proof}[Proof of Lemma \ref{pr:BFoRB}] Recall that B-FoRB  in \eqref{eq:BreFoRB} {can be equivalently written} as 
							$	0 \in \nabla \varphi({\bm{\omega}}^{(k+1)}) +  \Gamma A({\bm{\omega}}^{(k+1)} )
							-(\nabla \varphi ({\bm{\omega}}^{(k)}) - \Gamma (2B({\bm{\omega}}^{(k)})-B({\bm{\omega}}^{(k-1)})), \notag
							$
							implying that, {for  $A$ and  $B$ in \eqref{eq:op_split}, it holds that}
							{	
								\begin{align}
									{\bm{\omega}}^{(k+1)} 
									&= \underset{{\bm{\omega}} \in \mc X \times \mc Y \times \bb R^{n_\theta+n_\rho}_{\geq 0}}{\argmin} \Big( \dist_{\varphi}({\bm{\omega}},{\bm{\omega}}^{(k)}) \Big. 
									+\langle 2B({\bm{\omega}}^{(k)})-B({\bm{\omega}}^{(k-1)}), {\bm{\omega}} \rangle_\Gamma  \Big.\Big). \notag
							\end{align}}%
							Moreover, {$\varphi$ in \eqref{eq:Breg_f_gmi} as well as $A$ and $B$ in \eqref{eq:op_split}} are decomposable. Particularly, we can write 
							\begin{align*}
								B =&\col\left(\right.
								\col((B_i^x({\bm{\omega}}))_{i \in \mc I}),
								\col((B_i^y({\bm{\omega}}))_{i \in \mc I}), 
								\col((B_i^{\mu}({\bm{\omega}}))_{i \in \mc I}),
								B^{\lambda}({\bm{\omega}})\left.\right)
								,
							\end{align*} \color{black}
							where:
							\begin{align*}
								B_i^x({\bm{\omega}}) &= \nabla_{x_i} \bbi J_i^{\rm d} (\bm x) + G_i^{\rm d \top} \mu_i + H_i^{\rm d \top}\lambda;\ \ & B_i^{\mu}({\bm{\omega}}) = \theta_i - G_i^{\rm d} x_i - g_i^{\rm c}(y_i); \\
								B_i^y({\bm{\omega}}) &= \nabla_{y_i} J_i^{\rm c} (\bm y) + \nabla_{y_i}g_i^{\rm c}(\bm y)^{\top} \mu_i + \nabla_{y_i}h_i^{\rm c}(\bm y)^{\top}\lambda;\ \
								& B^{\lambda}({\bm{\omega}}) = \rho -H^{\rm d} \bm x - h^{\rm c}(\bm y).
							\end{align*}
							Let $\Gamma = \diag((\gamma_i I_{m_i})_{i \in \mc I},(\gamma_i I_{n_i})_{i \in \mc I},(\gamma_i I_{n_{\theta_i}})_{i \in \mc I}, \zeta I_{n_\rho})$,  which is also decomposable. Thus, we have that
							{\small	 \begin{align}		
									x_{i}^{(k+1)} &= \underset{x_i \in \Delta(\mc A_i)}{\argmin} \left( \gamma_i \langle 2B_i^x({\bm{\omega}}^{(k)}) - B_i^x({\bm{\omega}}^{(k-1)}), x_i \rangle \right. 
									\left. + \dist_{\varphi_i^{\mathrm{NE}}}(x_i,x_i^{(k)}) \right), \ \ &\forall i \in \mc I, \label{eq:x_upd}\\
									y_{i}^{(k+1)} &= \underset{y_i \in \mc Y_i}{\argmin} \left( \gamma_i \langle 2B_i^y({\bm{\omega}}^{(k)}) - B_i^y({\bm{\omega}}^{(k-1)}), y_i \rangle \right. 
									\left. + \dist_{\varphi_i^{\mathrm{E}}}(y_i,y_i^{(k)})\right), \ \ &\forall i \in \mc I, \label{eq:y_upd}	\\
									\mu_{i}^{(k+1)} &= \underset{\mu_i \in \bb R^{n_{\theta_i}}_{\geq 0}}{\argmin} \left( \gamma_i \langle 2B_i^{\mu}({\bm{\omega}}^{(k)}) - B_i^{\mu}({\bm{\omega}}^{(k-1)}), \mu_i \rangle \right. 
									\left. + \dist_{\varphi_i^{\mathrm{E}}}(\mu_i,\mu_i^{(k)})\right),\ \ &\forall i \in \mc I, \label{eq:mu_upd} \\
									\lambda^{(k+1)} &= \underset{\lambda \in \bb R^{n_\rho}_{\geq 0}}{\argmin} \ \left( \zeta \langle 2B^{\lambda}({\bm{\omega}}^{(k)}) - B^{\lambda}({\bm{\omega}}^{(k-1)}), \lambda \rangle \right. 
									\left. + \dist_{\varphi_i^{\mathrm{E}}}(\lambda,\lambda^{(k)})\right) \label{eq:lambda_upd}.
							\end{align}}
							The closed-form expression of \eqref{eq:x_upd} is given in \eqref{eq:x_upd_alg_BFoRB} \cite[Example 4.2]{belmega18}, whereas \eqref{eq:y_upd}, \eqref{eq:mu_upd}, and \eqref{eq:lambda_upd} are equivalently written as   \eqref{eq:y_upd_alg_BFoRB}, \eqref{eq:mu_upd_alg_BFoRB}, and \eqref{eq:lambda_upd_alg_BFoRB}. 
						\end{proof}

						Furthermore, by Assumptions \ref{as:cont}, \ref{as:mon_F}, and \ref{as:L_T34}, the operators $A$ and $B$ in  \eqref{eq:op_split} indeed satisfy the sufficient condition for the convergence of the B-FoRB {stated in Theorem \ref{th:conv_BreFoRB}}, i.e., both are maximally monotone  and $B$ is Lipschitz continuous.  {Moreover, $\dom(A) \cap \operatorname{int}\dom(B)\neq \varnothing$, hence $A+B$ is maximally monotone by \cite[Corollary 25.5(ii)]{bauschke11}.}  We note that $\bm \omega^{(k)} \in \dom(A)$, for all $k \in \bb N$. We also show in Section \ref{sec:NE_mon} that  for a point  ${\bm{\omega}}^{\star} = (\bm x^{\star}, \bm y^{\star}, \bm \mu^{\star}, \lambda^{\star}) \in \zer(A+B)$ with $A$ and $B$ defined in \eqref{eq:op_split}, $(\bm x^{\star}, \bm y^{\star})$ is a variational MS-GNE of the game \eqref{eq:gmi_game1}. Since we 
						\begin{itemize}
							\item {assume $\zeta,\gamma_i \in (0,\frac{1}{2\ell_B})$, for all $i \in \mc I$;
								\item use Legendre function $\varphi$ in \eqref{eq:Breg_f_gmi}, which {satisfies Assumption \ref{as:phi} and}  is  $1$-strongly convex {on $\dom(\mc T_1) \cap \operatorname{int}\dom(\varphi)$ (Lemma \ref{le:str_cvx_Bregf})};}
							\item initialize ${\bm{\omega}}^{(0)}={\bm{\omega}}^{(-1)} \in \operatorname{int}\dom (\varphi)$, and;
							\item assume that $\zer(A+B) \cap (\operatorname{int}\dom (\varphi))$ is non-empty,
						\end{itemize}
						the convergence of the sequence $({\bm{\omega}}^{(k)} )_{k \in \bb N}$ to a point in $\zer(A+B) \cap (\operatorname{int}\dom (\varphi))$ follows directly from Theorem \ref{th:conv_BreFoRB}. Hence, $(\bm x_k, \bm y_k)_{k \in \bb N}$ converges to a variational MS-GNE of the game in \eqref{eq:gmi_game1}. \qedd
						
						\begin{remark}
							{When the regularization function is in the form of Example \ref{ex:E_entropy}, as in \eqref{eq:y_upd}, \eqref{eq:mu_upd}, and \eqref{eq:lambda_upd}, we obtain Euclidean projection steps as in \eqref{eq:y_upd_alg_BFoRB}, \eqref{eq:mu_upd_alg_BFoRB}, and \eqref{eq:lambda_upd_alg_BFoRB}.	On the other hand, by choosing the negative entropy in Example \ref{ex:NE_entropy} as the regularization function when the constraint set is a simplex the optimizer on the right-hand side of \eqref{eq:x_upd} is of the closed form given by \eqref{eq:x_upd_alg_BFoRB}.   Another {pair of} regularization function and convex set that yields a closed-form non-Euclidean projection is given by the Fermi-Dirac entropy function on the box $[0,1]$, i.e., $\varphi^{\mathrm{FD}} \colon [0,1] \to \bb R \colon x \mapsto x \ln (x) + (1-x) \ln(1-x)$, which results in
								\begin{align*}
									y = \underset{x \in [0,1]}{\argmin} \left(\langle v, x \rangle + \dist_{\varphi_i^{\mathrm{FD}}}(x, z) \right) 
									= \frac{e^v z}{1-z + e^v z},
								\end{align*}
								for any $(v, z) \in \bb R \times (0,1)$.  
								Section V in \cite{gao20b} discusses other pairs. Although the authors of \cite{gao20b} consider a slightly different mirror operator, one can modify the closed-form mirror maps in \cite[Table 1]{gao20} to fit with the definition of our backward operator. \eod}  
						\end{remark}
						\section{Proof of Lemma \ref{le:cons_op}}
						\label{ap:pf:le:cons_op}
						The operator $\tilde{\mc T}_5$ in \eqref{eq:kkt_op2} 
						is a skew-symmetric linear operator, thus it is maximally monotone \cite[Example 20.35]{bauschke11}. Furthermore, due to Assumption \ref{as:conG}, $\|\L \| \leq \ell_{\tilde{\mc T}_5}= 2 \max_{i \in \mc I} \sum_{j=1}^N w_{i,j}$ \cite[Section 2.2]{yi19}. Then, it holds that $\|\tilde{\mc T}_5(\tilde{\bm{\omega}}_1)- \tilde{\mc T}_5(\tilde{\bm{\omega}}_2) \| \leq \|\L \| \|\tilde{\bm{\omega}}_1-\tilde{\bm{\omega}}_2 \| \leq \ell_{\tilde{\mc T}_5} \|\tilde{\bm{\omega}}_1-\tilde{\bm{\omega}}_2 \|$, for any $\tilde{\bm{\omega}}_1,\tilde{\bm{\omega}}_2$. 
						Thus, $\tilde{\mc T}_5$ 
						is Lipschitz continuous. \qedd

						\bibliographystyle{siamplain}
						\bibliography{ref}
\end{document}